\tikzset{tab/.style={matrix of math nodes,column sep=-.4, row sep=-.4,text height=9pt,text width=8pt,align=center,inner sep=1.5}}
\tikzset{dynkdot/.style={circle,draw,scale=.62}}
\lstdefinelanguage{Sage}[]{Python}
{morekeywords={False,sage,True},sensitive=true}
\definecolor{dblackcolor}{rgb}{0.0,0.0,0.0}
\definecolor{dbluecolor}{rgb}{0.01,0.02,0.7}
\definecolor{dgreencolor}{rgb}{0.2,0.4,0.0}
\definecolor{dgraycolor}{rgb}{0.30,0.3,0.30}
\protected\def\specialmergetwolists{%
  \begingroup
  \@ifstar{\def\cnta{1}\@specialmergetwolists}
    {\def\cnta{0}\@specialmergetwolists}%
}
\def\@specialmergetwolists#1#2#3#4{%
  \def\tempa##1##2{%
    \edef##2{%
      \ifnum\cnta=\@ne\else\expandafter\@firstoftwo\fi
      \unexpanded\expandafter{##1}%
    }%
  }%
  \tempa{#2}\tempb\tempa{#3}\tempa
  \def\cnta{0}\def#4{}%
  \foreach \x in \tempb{%
    \xdef\cnta{\the\numexpr\cnta+1}%
    \gdef\cntb{0}%
    \foreach \y in \tempa{%
      \xdef\cntb{\the\numexpr\cntb+1}%
      \ifnum\cntb=\cnta\relax
        \xdef#4{#4\ifx#4\empty\else,\fi\x#1\y}%
        \breakforeach
      \fi
    }%
  }%
  \endgroup
}
\DeclareDocumentCommand\rpp{ m m g }{
	\foreach \x [count=\s from 1] in {#1}{
	        {\ifnum\s=1
	                \draw (0,-\s)--(\x,-\s);
	                \fi}
	   \draw (0,-\s-1) to (\x,-\s-1);
	   \foreach \y in {0, ..., \x} {\draw (\y,-\s)--(\y,-\s-1);}
	}
	\specialmergetwolists{/}{#1}{#2}\ziplist
	\foreach \x/\y [count=\yi from 1] in \ziplist{
	    \node[anchor=west] at (\x,-\yi - .5) {$\y$};  
	}
	\IfValueT {#3}
	{\foreach \z [count=\zi from 1] in {#3} {\node[anchor=east] at (0,-\zi - .5) {$\z$};}}  
	{}
}
\numberwithin{equation}{section}
\newtheorem{theorem}{Theorem}
\newtheorem{proposition}[theorem]{Proposition}
\newtheorem{lemma}[theorem]{Lemma}
\newtheorem{corollary}[theorem]{Corollary}
\theoremstyle{definition}
\newtheorem{definition}[theorem]{Definition}
\newtheorem{remark}[theorem]{Remark}
\newtheorem{example}[theorem]{Example}
\numberwithin{theorem}{section}
\newcommand{\g}{\mathfrak{g}}
\newcommand{\ZZ}{\mathbb{Z}}
\newcommand{\p}{\mathcal{P}}
\newcommand{\rc}{\mathcal{RC}} 
\newcommand{\RC}{\operatorname{RC}} 
\newcommand{\virtual}[1]{\widehat{#1}} 
\newcommand{\vp}{\p}
\newcommand{\vrc}{\rc}
\newcommand{\lh}{\operatorname{lh}}
\newcommand{\lhs}{\lh_{\mathrm{sp}}} 
\newcommand{\lhsv}{\virtual{\lh}_{\mathrm{sp}}} 
\newcommand{\lb}{\operatorname{lb}}
\newcommand{\ls}{\operatorname{ls}}
\newcommand{\lx}{\operatorname{lx}}
\newcommand{\deltas}{\delta_{\mathrm{sp}}} 
\newcommand{\deltasv}{\virtual{\delta}_{\mathrm{sp}}} 
\newcommand{\deltast}{\widetilde{\delta}_{\mathrm{sp}}} 
\newcommand{\rh}{\operatorname{rh}}
\newcommand{\rb}{\operatorname{rb}}
\newcommand{\rs}{\operatorname{rs}}
\newcommand{\cc}{\operatorname{cc}} 
\newcommand{\id}{\operatorname{id}} 
\newcommand{\emb}{\operatorname{emb}}
\newcommand{\wt}{\operatorname{wt}}
\newcommand{\sgn}{\operatorname{sgn}}  
\newcommand{\la}{\lambda}
\newcommand{\ot}{\otimes}
\newcommand{\lusz}{\star} 
\newcommand{\hwstar}{\operatorname{\diamond}} 
\newcommand{\xmap}[1]{\xrightarrow[\hspace{30pt}]{#1}}
\newcommand{\iso}{\cong} 
\newcommand{\mone}{\overline{1}}
\newcommand{\mn}{\overline{n}}
\newcommand{\clfw}{\varpi} 
\definecolor{darkred}{rgb}{0.7,0,0} 
\newcommand{\defn}[1]{{\color{darkred}\emph{#1}}} 
\begin{document}
 
\title[RC bijection for nonexceptional affine types]
{Rigged configuration bijection and proof of the $X=M$ conjecture for nonexceptional affine types}

\author[M.~Okado]{Masato Okado}
\address[M. Okado]{Department of Mathematics, Osaka City University, 3-3-138,
Sugimoto, Sumiyoshi-ku, Osaka, 558-8585, Japan}
\email{okado@sci.osaka-cu.ac.jp}

\author[A.~Schilling]{Anne Schilling}
\address[A. Schilling]{Department of Mathematics, University of California, One Shields
Avenue, Davis, CA 95616-8633, U.S.A.}
\email{anne@math.ucdavis.edu}
\urladdr{http://www.math.ucdavis.edu/\~{}anne}

\author[T.~Scrimshaw]{Travis Scrimshaw}
\address[T. Scrimshaw]{School of Mathematics and Physics, University of Queensland, St. Lucia, QLD 4072, Australia}
\email{tcscrims@gmail.com}
\urladdr{https://sites.google.com/view/tscrim/home}
 
\subjclass[2010]{Primary 17B37; Secondary: 05A19; 81R50; 82B23}
  
\begin{abstract}
We establish a bijection between rigged configurations and
highest weight elements of a tensor  product of Kirillov--Reshetikhin crystals for all nonexceptional types.
A key idea for the proof is to embed both objects into bigger sets for 
simply-laced types $A_n^{(1)}$ or $D_n^{(1)}$, whose bijections have already been established.
As a consequence we settle the $X=M$ conjecture in full generality for nonexceptional types.
Furthermore, the bijection extends to a classical crystal isomorphism and sends the combinatorial 
$R$-matrix to the identity map on rigged configurations.
\end{abstract}
 
\maketitle
 
\tableofcontents

\pagebreak

\section{Introduction}

Kerov, Kirillov and Reshetikhin~\cite{KKR:1986} introduced rigged configurations as combinatorial objects to 
parameterize the Bethe vectors for Heisenberg spin chains. Moreover, they constructed a bijection from rigged 
configurations to highest weight elements of a tensor product of the vector representation of $\mathfrak{sl}_2$.
This was generalized to tensor products of the symmetric tensor representations of $\mathfrak{sl}_n$ in~\cite{KR86}, 
where the generating function of rigged configurations with the cocharge statistic was shown to be the Kostka polynomials.
This was generalized further to tensor products of multiples of fundamental weights (which can be interpreted as rectangles) 
in~\cite{KSS:2002}, where these rigged configurations were connected with Littlewood--Richardson tableaux and 
generalized Kostka polynomials~\cite{KS02, Shimozono01, Shimozono01b, Shimozono02, SW99, SW00}.

In order to generalize this beyond type $A$, the notion of Kirillov--Reshetihkin (KR) crystals is needed.
Let $\g$ be an affine Kac--Moody Lie algebra and $U'_q(\g)$ be the quantum group of $\g' := [\g, \g]$.
A KR crystal is the crystal basis of a Kirillov--Reshetikhin module:
a certain finite-dimensional $U'_q(\g)$-module that is the minimal affinization of a multiple of a fundamental 
weight~\cite{Chari95, CP95II, CP95, CP96, CP96II, CP98}.
KR crystals were shown to exist for nonexceptional types in~\cite{OS08} and their $U_q'(\g)$-crystal structure 
was given in~\cite{FOS:2009}. A path is a classically highest weight element in the tensor product of KR crystals, 
where for type $A_n^{(1)}$, this agrees with the notion above.

Paths also arise from calculations of 2D integrable lattice models using Baxter's corner transfer matrix method~\cite{B89}.
This method leads to the quantity $X$, which is the sum over the intrinsic energy statistic of paths of $B$, a tensor product of KR crystals.
Let $M$ denote the sum over the cocharge statistic of rigged configurations of a fixed multiplicity matrix $L$, which is called the fermionic formula.
The $X = M$ conjecture of~\cite{HKOTY99, HKOTT02} states that $X$ is equal to $M$ when $L$ counts the factors in $B$,
which suggests the existence of a bijection between paths and rigged configurations that sends the intrinsic 
energy to cocharge.

As previously mentioned, the general case of the desired bijection was proven to be a bijection in type 
$A_n^{(1)}$~\cite{KSS:2002}, building upon~\cite{KKR:1986, KR86}.
Furthermore, for type $A_n^{(1)}$, the bijection was extended as a classical crystal isomorphism in~\cite{DS06} 
using the crystal structure of~\cite{Sch:2006} and to a full $U_q'(\g)$-crystal isomorphism in~\cite{SW10}.
For type $D_n^{(1)}$, an analogous bijection was proven in the general case in~\cite{OSSS16}, building upon the 
special cases of~\cite{OSS:2013, S:2005}; the bijection was shown to intertwine with the
classical crystal structure in~\cite{Sak:2013}.
For other types, the bijection is known in a number of special 
cases~\cite{OSS:2003a, OSS03III, OSS03II, OS12, OSS:2013, SS:X=M, schilling.scrimshaw.2015, scrimshaw.2015, 
Scrimshaw17}.

As far as the $X=M$ conjecture is concerned, there is an alternative proof by 
Naoi~\cite{naoi.2012} for type $A_n^{(1)}$ and $D_n^{(1)}$ using representation theory of 
Kirillov--Reshetikhin modules for a current algebra.

KR crystals are also known to be a reservoir of perfect crystals~\cite{KKMMNN91, KKMMNN92}.
Indeed, the condition for a KR crystal to be perfect of a fixed level $\ell$ is proven for all nonexceptional types 
in~\cite{FOS10}. In particular, this allows KR crystals to be used in the Kyoto path model of~\cite{KKMMNN92}, 
an iterative method to construct highest weight $U_q(\g)$-crystals from the KR crystal.
If all components of the tensor product of KR crystals $B$ are perfect of level $\ell$, 
then it is known that $X$, in a suitable limit, turns our to be a branching function for the coset $\g/\g_0$, where $\g_0$ is
its underlying finite-dimensional simple Lie algebra of $\g$.
Hence, the proof of $X=M$ conjecture implies a fermionic formula of such branching functions
(see~\cite[Theorem 5.4]{HKOTY99} and~\cite[Proposition 4.5]{HKOTT02}).

The goal of this paper is to give a combinatorial proof of the $X = M$ conjecture for all nonexceptional types.
Our main result is the construction of an explicit bijection $\Phi$ from paths to rigged configurations for all 
nonexceptional affine types. Furthermore, we show that $\Phi$ sends the combinatorial $R$-matrix on paths to
the identity on rigged configurations and can be extended to a classical crystal isomorphism.
The bijection also sends the intrinsic energy to cocharge up to a simple involution $\theta$ on rigged configurations 
which interchanges each rigging with its corigging. The map $\theta$ is related to the Lusztig involution.
These facts were shown in type $A_n^{(1)}$ (resp.~$D_n^{(1)}$) in~\cite{KSS:2002} (resp.~\cite{OSSS16}).

Our techniques for constructing $\Phi$ use virtual crystals~\cite{K96, OSS03III, OSS03II, schilling.scrimshaw.2015}, 
which are crystals constructed using diagram foldings of a simply-laced type.
Explicitly, we construct $\Phi$ for nonexceptional type $\g$ by lifting to ambient type $A$ or $D$, doing the 
type $A$ or $D$ bijection and then retracting back to type $\g$.
Then, using the algorithm for the bijection in simply-laced types, we give an explicit algorithm for $\Phi$ in all 
nonexceptional types, where the basic operation $\delta$ is given in~\cite{OSS:2003a}.

In~\cite{FOS:2009}, KR crystals were constructed using Kashiwara--Nakashima (KN) tableaux~\cite{KN:1994}.
However, this does not explicitly describe the tableaux in the image of $\Phi^{-1}$ except in type $A_n^{(1)}$.
By taking the image of the bijection $\Phi^{-1}$ on a single KR crystal, a new tableau model is obtained, coined 
Kirillov--Reshetikhin (KR) tableau.
KR tableaux have been explicitly described on classically highest weight elements for all nonexceptional 
types~\cite{OSS:2013, schilling.scrimshaw.2015}, type $G_2^{(1)}$ and $D_4^{(3)}$~\cite{scrimshaw.2015} and 
some additional special cases~\cite{Scrimshaw17}.
From~\cite{OSSS16}, the image of $\Phi^{-1}$ in type $D_n^{(1)}$ for arbitrary factors is precisely described by a 
tensor product of KR tableaux, which gives an explicit algorithm for computing $\Phi$.
Furthermore, KR tableaux distinguish each classical component.
As a consequence of our construction, we have that the image of $\Phi^{-1}$ are tensor products of KR tableaux 
for all nonexceptional types.

We expect our techniques to apply to the exceptional cases as well.
To achieve this, the existence of KR crystals for exceptional type $\g$ needs to be established first,
which is sufficient for extensions to type $G_2^{(1)}$ and $D_4^{(3)}$,
as well as the bijection for types $E_{6,7,8}^{(1)}$ in full generality to obtain the remaining types, $F_4^{(1)}$ and $E_6^{(2)}$.

In~\cite{Scrimshaw17}, a more conceptual approach was given to describe $\Phi$ as relating to the crystal s
tructure of the factors added.
In~\cite{SalisburyS16II}, the map $\theta$ was shown to be the star involution on the rigged configuration model 
for $B(\infty)$ given in~\cite{SalisburyS15, SalisburyS15II}.
Our results are more evidence that $\Phi$ has a natural crystal-theoretic description since $\Phi$ respects the 
virtualization of the KR crystals.
Furthermore, our results might have applications to construct extremal level-zero crystals~\cite{Kashiwara.2002} 
using rigged configurations, parallel to~\cite{HN06, LL15, NS03, NS05II, NS06, NS08II, NS08, PS15}.
This would allow a direct description of a $U_q(\g)$-crystal structure on rigged configuration.

Soliton cellular automata (SCA) are nonlinear discrete dynamical systems that are generalizations of the 
Takahashi--Satsuma box-ball system~\cite{TS} and have been well-studied, 
{\it e.g.},~\cite{Mohamad, FOY, HHIKTT, HKT00, HKT:2001, HKOTY02, KTT04, LS17, MOW12, MW13, 
TNS96, Yamada04, Yamada07}.
The bijection $\Phi$ can be considered as a linearization of the dynamics, and hence, 
properties of SCA become easy to show by making the most of $\Phi$~\cite{LS17}.
In particular, the rigged configurations encode the action-angle variables of the SCA~\cite{KOSTY:2006, Takagi05}.
Moreover, the bijection $\Phi^{-1}$ in type $A_n^{(1)}$ was shown to be described by a tropicalization of the $\tau$ 
function from the Kadomtsev--Petviashili (KP) hierarchy~\cite{KSY:2006}.
Our results prove the basic assumptions of~\cite{LS17} in all nonexceptional types, which gives an interpretation of~\cite[Prop.~4.10]{HKOTT02}.

This paper is organized as follows.
In Section~\ref{sec:background}, we give the necessary background on crystals and rigged configurations.
In Section~\ref{sec:bijection}, we construct the bijection $\Phi$ and prove our main results.
In Section~\ref{sec:properties}, we show some properties of $\Phi$ and prove the $X = M$ conjecture for nonexceptional types.

\subsection*{Acknowledgments}
MO was partially supported by JSPS grant 16H03922.
AS was partially supported by NSF grant  DMS--1500050.
TS was partially supported by NSF RTG grant DMS--1148634 and JSPS grant K15K13429.

TS and MO would like to thank the University of California Davis for hospitality during their stay in March, 2017, 
where the majority of this work took place.
TS would like to thank Osaka City University for hospitality during his visit in July, 2017.
This work benefitted from computations and experimentations in {\sc Sage}~\cite{combinat,sage}.

The authors would like to thank the anonymous referee for useful comments.

\section{Background}
\label{sec:background}

\subsection{Crystals}

Let $\g$ be an affine Kac--Moody Lie algebra with index set $I$, Cartan matrix $(A_{ij})_{i,j \in I}$, simple roots 
$(\alpha_i)_{i \in I}$, simple coroots $(\alpha_i^{\vee})_{i \in I}$, fundamental weights $(\Lambda_i)_{i \in I}$, weight 
lattice $P$ and canonical pairing $\langle \cdot ,\cdot \rangle$ such that $\langle \alpha_i^{\vee}, \alpha_j \rangle = A_{ij}$. 
We follow the labeling of $I$ given in~\cite{kac}.
For our purpose we also need the opposite labeling of $I$ for $A^{(2)}_{2n}$ denoted by $A^{(2)\dagger}_{2n}$.
The Dynkin diagrams of all nonexceptional types including  $A^{(2)\dagger}_{2n}$ are given in Table \ref{tab:Dynkin}.
Let $U_q(\g)$ denote the corresponding quantum group, and let $U_q'(\g) = U_q([\g, \g])$ be the quantum group 
corresponding to the derived subalgebra of $\g$.
Let $\g_0$ denote the canonical simple Lie algebra given by the index set $I_0 := I \setminus \{0\}$.
Let $\clfw_i$ ($i\in I_0$) denote the funamental weights of the weight lattice $P_0$ of type $\g_0$.

{\unitlength=.95pt
\begin{table}
\begin{tabular}[t]{rlrl}
$A_1^{(1)}$:&
\begin{picture}(26,20)(-5,-5)
\multiput( 0,0)(20,0){2}{\circle{6}}
\multiput(2.85,-1)(0,2){2}{\line(1,0){14.3}}
\put(0,-5){\makebox(0,0)[t]{$0$}}
\put(20,-5){\makebox(0,0)[t]{$1$}} \put( 6, 0){\makebox(0,0){$<$}}
\put(14, 0){\makebox(0,0){$>$}}
\end{picture}
&
$A^{(2)}_2$:&
\begin{picture}(26,20)(-5,-5)
\multiput( 0,0)(20,0){2}{\circle{6}}
\multiput(2.958,-0.5)(0,1){2}{\line(1,0){14.084}}
\multiput(2.598,-1.5)(0,3){2}{\line(1,0){14.804}}
\put(0,-5){\makebox(0,0)[t]{$0$}}
\put(20,-5){\makebox(0,0)[t]{$1$}} \put(10,0){\makebox(0,0){$<$}}
\end{picture}
\\
&
\\
\begin{minipage}{4em}
\begin{flushright}
$A_n^{(1)}$:\\$(n \geqslant 2)$
\end{flushright}
\end{minipage}&
\begin{picture}(106,40)(-5,-5)
\multiput( 0,0)(20,0){2}{\circle{6}}
\multiput(80,0)(20,0){2}{\circle{6}} \put(50,20){\circle{6}}
\multiput( 3,0)(20,0){2}{\line(1,0){14}}
\multiput(63,0)(20,0){2}{\line(1,0){14}}
\multiput(39,0)(4,0){6}{\line(1,0){2}}
\put(2.78543,1.1142){\line(5,2){44.429}}
\put(52.78543,18.8858){\line(5,-2){44.429}}
\put(0,-5){\makebox(0,0)[t]{$1$}}
\put(20,-5){\makebox(0,0)[t]{$2$}}
\put(80,-5){\makebox(0,0)[t]{$n\!\! -\!\! 1$}}
\put(100,-5){\makebox(0,0)[t]{$n$}}
\put(55,20){\makebox(0,0)[lb]{$0$}}
\end{picture}
&
\begin{minipage}[b]{4em}
\begin{flushright}
$A_{2n}^{(2)}$:\\$(n \geqslant 2)$
\end{flushright}
\end{minipage}&
\begin{picture}(126,20)(-5,-5)
\multiput( 0,0)(20,0){3}{\circle{6}}
\multiput(100,0)(20,0){2}{\circle{6}}
\multiput(23,0)(20,0){2}{\line(1,0){14}}
\put(83,0){\line(1,0){14}}
\multiput( 2.85,-1)(0,2){2}{\line(1,0){14.3}} 
\multiput(102.85,-1)(0,2){2}{\line(1,0){14.3}} 
\multiput(59,0)(4,0){6}{\line(1,0){2}} 
\put(10,0){\makebox(0,0){$<$}} \put(110,0){\makebox(0,0){$<$}}
\put(0,-5){\makebox(0,0)[t]{$0$}}
\put(20,-5){\makebox(0,0)[t]{$1$}}
\put(40,-5){\makebox(0,0)[t]{$2$}}
\put(100,-5){\makebox(0,0)[t]{$n\!\! -\!\! 1$}}
\put(120,-5){\makebox(0,0)[t]{$n$}}
\end{picture}
\\
&
\\
\begin{minipage}[b]{4em}
\begin{flushright}
$B_n^{(1)}$:\\$(n \geqslant 3)$
\end{flushright}
\end{minipage}&
\begin{picture}(126,40)(-5,-5)
\multiput( 0,0)(20,0){3}{\circle{6}}
\multiput(100,0)(20,0){2}{\circle{6}} \put(20,20){\circle{6}}
\multiput( 3,0)(20,0){3}{\line(1,0){14}}
\multiput(83,0)(20,0){1}{\line(1,0){14}}
\put(20,3){\line(0,1){14}}
\multiput(102.85,-1)(0,2){2}{\line(1,0){14.3}} 
\multiput(59,0)(4,0){6}{\line(1,0){2}} 
\put(110,0){\makebox(0,0){$>$}} \put(0,-5){\makebox(0,0)[t]{$1$}}
\put(20,-5){\makebox(0,0)[t]{$2$}}
\put(40,-5){\makebox(0,0)[t]{$3$}}
\put(100,-5){\makebox(0,0)[t]{$n\!\! -\!\! 1$}}
\put(120,-5){\makebox(0,0)[t]{$n$}}
\put(25,20){\makebox(0,0)[l]{$0$}}
\end{picture}
&
$A^{(2)\dagger}_2$:&
\begin{picture}(26,20)(-5,-5)
\multiput( 0,0)(20,0){2}{\circle{6}}
\multiput(2.958,-0.5)(0,1){2}{\line(1,0){14.084}}
\multiput(2.598,-1.5)(0,3){2}{\line(1,0){14.804}}
\put(0,-5){\makebox(0,0)[t]{$0$}}
\put(20,-5){\makebox(0,0)[t]{$1$}} \put(10,0){\makebox(0,0){$>$}}
\put(0,13){\makebox(0,0)[t]{$$}} \put(20,13){\makebox(0,0)[t]{$$}}
\end{picture}
\\
&
\\
\begin{minipage}[b]{4em}
\begin{flushright}
$C_n^{(1)}$:\\$(n \geqslant 2)$
\end{flushright}
\end{minipage}&
\begin{picture}(126,20)(-5,-5)
\multiput( 0,0)(20,0){3}{\circle{6}}
\multiput(100,0)(20,0){2}{\circle{6}}
\multiput(23,0)(20,0){2}{\line(1,0){14}}
\put(83,0){\line(1,0){14}}
\multiput( 2.85,-1)(0,2){2}{\line(1,0){14.3}} 
\multiput(102.85,-1)(0,2){2}{\line(1,0){14.3}} 
\multiput(59,0)(4,0){6}{\line(1,0){2}} 
\put(10,0){\makebox(0,0){$>$}} \put(110,0){\makebox(0,0){$<$}}
\put(0,-5){\makebox(0,0)[t]{$0$}}
\put(20,-5){\makebox(0,0)[t]{$1$}}
\put(40,-5){\makebox(0,0)[t]{$2$}}
\put(100,-5){\makebox(0,0)[t]{$n\!\! -\!\! 1$}}
\put(120,-5){\makebox(0,0)[t]{$n$}}

\end{picture}
&
\begin{minipage}[b]{4em}
\begin{flushright}
$A_{2n}^{(2)\dagger}$:\\$(n \geqslant 2)$
\end{flushright}
\end{minipage}&
\begin{picture}(126,20)(-5,-5)
\multiput( 0,0)(20,0){3}{\circle{6}}
\multiput(100,0)(20,0){2}{\circle{6}}
\multiput(23,0)(20,0){2}{\line(1,0){14}}
\put(83,0){\line(1,0){14}}
\multiput( 2.85,-1)(0,2){2}{\line(1,0){14.3}} 
\multiput(102.85,-1)(0,2){2}{\line(1,0){14.3}} 
\multiput(59,0)(4,0){6}{\line(1,0){2}} 
\put(10,0){\makebox(0,0){$>$}} \put(110,0){\makebox(0,0){$>$}}
\put(0,-5){\makebox(0,0)[t]{$0$}}
\put(20,-5){\makebox(0,0)[t]{$1$}}
\put(40,-5){\makebox(0,0)[t]{$2$}}
\put(100,-5){\makebox(0,0)[t]{$n\!\! -\!\! 1$}}
\put(120,-5){\makebox(0,0)[t]{$n$}}
\put(0,13){\makebox(0,0)[t]{$$}}
\put(20,13){\makebox(0,0)[t]{$$}}
\put(40,13){\makebox(0,0)[t]{$$}}
\put(100,13){\makebox(0,0)[t]{$$}}
\put(120,13){\makebox(0,0)[t]{$$}}
\put(120,13){\makebox(0,0)[t]{$$}}
\end{picture}
\\
&
\\
\begin{minipage}[b]{4em}
\begin{flushright}
$D_n^{(1)}$:\\$(n \geqslant 4)$
\end{flushright}
\end{minipage}&
\begin{picture}(106,40)(-5,-5)
\multiput( 0,0)(20,0){2}{\circle{6}}
\multiput(80,0)(20,0){2}{\circle{6}}
\multiput(20,20)(60,0){2}{\circle{6}} \multiput(
3,0)(20,0){2}{\line(1,0){14}}
\multiput(63,0)(20,0){2}{\line(1,0){14}}
\multiput(39,0)(4,0){6}{\line(1,0){2}}
\multiput(20,3)(60,0){2}{\line(0,1){14}}
\put(0,-5){\makebox(0,0)[t]{$1$}}
\put(20,-5){\makebox(0,0)[t]{$2$}}
\put(80,-5){\makebox(0,0)[t]{$n\!\! - \!\! 2$}}
\put(103,-5){\makebox(0,0)[t]{$n\!\! -\!\! 1$}}
\put(25,20){\makebox(0,0)[l]{$0$}}
\put(85,20){\makebox(0,0)[l]{$n$}}
\end{picture}
&
\begin{minipage}[b]{4em}
\begin{flushright}
$A_{2n-1}^{(2)}$:\\$(n \geqslant 3)$
\end{flushright}
\end{minipage}&
\begin{picture}(126,40)(-5,-5)
\multiput( 0,0)(20,0){3}{\circle{6}}
\multiput(100,0)(20,0){2}{\circle{6}} \put(20,20){\circle{6}}
\multiput( 3,0)(20,0){3}{\line(1,0){14}}
\multiput(83,0)(20,0){1}{\line(1,0){14}}
\put(20,3){\line(0,1){14}}
\multiput(102.85,-1)(0,2){2}{\line(1,0){14.3}} 
\multiput(59,0)(4,0){6}{\line(1,0){2}} 
\put(110,0){\makebox(0,0){$<$}} \put(0,-5){\makebox(0,0)[t]{$1$}}
\put(20,-5){\makebox(0,0)[t]{$2$}}
\put(40,-5){\makebox(0,0)[t]{$3$}}
\put(100,-5){\makebox(0,0)[t]{$n\!\! -\!\! 1$}}
\put(120,-5){\makebox(0,0)[t]{$n$}}
\put(25,20){\makebox(0,0)[l]{$0$}}

\end{picture}
\\
&
\\
&&
\begin{minipage}[b]{4em}
\begin{flushright}
$D_{n+1}^{(2)}$:\\$(n \geqslant 2)$
\end{flushright}
\end{minipage}&
\begin{picture}(126,20)(-5,-5)
\multiput( 0,0)(20,0){3}{\circle{6}}
\multiput(100,0)(20,0){2}{\circle{6}}
\multiput(23,0)(20,0){2}{\line(1,0){14}}
\put(83,0){\line(1,0){14}}
\multiput( 2.85,-1)(0,2){2}{\line(1,0){14.3}} 
\multiput(102.85,-1)(0,2){2}{\line(1,0){14.3}} 
\multiput(59,0)(4,0){6}{\line(1,0){2}} 
\put(10,0){\makebox(0,0){$<$}} \put(110,0){\makebox(0,0){$>$}}
\put(0,-5){\makebox(0,0)[t]{$0$}}
\put(20,-5){\makebox(0,0)[t]{$1$}}
\put(40,-5){\makebox(0,0)[t]{$2$}}
\put(100,-5){\makebox(0,0)[t]{$n\!\! -\!\! 1$}}
\put(120,-5){\makebox(0,0)[t]{$n$}}

\end{picture}
\\ \vspace{5pt}
\end{tabular}
\caption{Dynkin diagrams for all nonexceptional affine types. The labeling of the
nodes by elements of $I$ is specified under or to the right of the nodes. 
} 
\label{tab:Dynkin}
\end{table}}

Let $c_i$ and $c_i^{\vee}$ denote the Kac and dual Kac labels~\cite[Table~Aff1-3]{kac}.
The null root is given by $\delta = \sum_{i \in I} c_i \alpha_i$.
The canonical central element is given by $c = \sum_{i \in I} c_i^{\vee} \alpha_i^{\vee}$.
The normalized (symmetric) invariant form $( \cdot | \cdot ) \colon P \times P \to \ZZ$ is defined by  
$(\alpha_i | \alpha_j) = \frac{c_i^{\vee}}{c_i} A_{ij}$.

A \defn{$U_q(\g)$-crystal} is a nonempty set $B$ together with \defn{crystal operators} $e_i, f_i \colon B \to B \sqcup \{0\}$, 
for $i \in I$, and \defn{weight function} $\wt \colon B \to P$. Let $\varepsilon_i, \varphi_i \colon  B \to \ZZ_{\geqslant 0}$ be 
statistics given by
\[
	\varepsilon_i(b) := \max \{ k \mid e_i^k b \neq 0 \},
	\qquad \qquad
	\varphi_i(b) := \max \{ k \mid f_i^k b \neq 0 \}.
\]
The following conditions should be satisfied:
\begin{itemize}
\item[(1)] $\varphi_i(b) = \varepsilon_i(b) + \langle \alpha^{\vee}_i, \wt(b) \rangle$ for all $b \in B$ and $i \in I$.
\item[(2)] $f_i b = b'$ if and only if $b = e_i b'$ for $b, b' \in B$ and $i \in I$.
\end{itemize}
We say an element $b \in B$ is \defn{$J$-highest weight} if $e_i b = 0$ for all $i \in J \subset I$.

We define the \defn{tensor product} of abstract $U_q(\g)$-crystals $B_1$ and $B_2$ as follows. As a set
the crystal $B_2 \otimes B_1$ is the Cartesian product $B_2 \times B_1$. The crystal operators are defined as:
\begin{align*}
e_i(b_2 \otimes b_1) & := \begin{cases}
e_i b_2 \otimes b_1 & \text{if } \varepsilon_i(b_2) > \varphi_i(b_1), \\
b_2 \otimes e_i b_1 & \text{if } \varepsilon_i(b_2) \leqslant \varphi_i(b_1)\,,
\end{cases}
\\ f_i(b_2 \otimes b_1) & := \begin{cases}
f_i b_2 \otimes b_1 & \text{if } \varepsilon_i(b_2) \geqslant \varphi_i(b_1), \\
b_2 \otimes f_i b_1 & \text{if } \varepsilon_i(b_2) < \varphi_i(b_1)\,,
\end{cases}
\\ \varepsilon_i(b_2 \otimes b_1) & := \max(\varepsilon_i(b_1), \varepsilon_i(b_2) - \langle \alpha_i^{\vee}, \wt(b_1)\rangle )\,,
\\ \varphi_i(b_2 \otimes b_1) & := \max(\varphi_i(b_2), \varphi_i(b_1) + \langle \alpha_i^{\vee} \wt(b_2) \rangle )\,,
\\ \wt(b_2 \otimes b_1) & := \wt(b_2) + \wt(b_1)\,.
\end{align*}

\begin{remark}
In this paper we use the convention for tensor products of crystals as in~\cite{Bump.Schilling.2017}, which is opposite
to the convention used by Kashiwara~\cite{Kashiwara:1991}.
\end{remark}

For abstract $U_q(\g)$-crystals $B_1, \dotsc, B_L$, the action of the crystal operators on the tensor product 
$B := B_L \otimes \cdots \otimes B_2 \otimes B_1$ can be computed by the \defn{signature rule}.
Let $b := b_L \otimes \cdots \otimes b_2 \otimes b_1 \in B$, and for $i \in I$, we write
\[
\underbrace{-\cdots-}_{\varphi_i(b_L)}\
\underbrace{+\cdots+}_{\varepsilon_i(b_L)}\
\cdots\
\underbrace{-\cdots-}_{\varphi_i(b_1)}\
\underbrace{+\cdots+}_{\varepsilon_i(b_1)}\,.
\]
Then by successively deleting consecutive $+-$-pairs (in that order) in the above sequence, we obtain a sequence
\[
\sgn_i(b) :=
\underbrace{-\cdots-}_{\varphi_i(b)}\
\underbrace{+\cdots+}_{\varepsilon_i(b)}\,,
\]
called the \defn{reduced signature}.
Suppose $1 \leqslant j_-\leqslant j_+ \leqslant L$ are such that $b_{j_-}$ contributes the rightmost $-$ in $\sgn_i(b)$ 
and $b_{j_+}$ contributes the leftmost $+$ in $\sgn_i(b)$. Then, we have
\begin{align*}
e_i b &:= b_L \otimes \cdots \otimes b_{j_++1} \otimes e_ib_{j_+} \otimes b_{j_+-1} \otimes \cdots \otimes b_1\,, \\
f_i b &:= b_L \otimes \cdots \otimes b_{j_-+1} \otimes f_ib_{j_-} \otimes b_{j_--1} \otimes \cdots \otimes b_1\,.
\end{align*}

Let $B_1$ and $B_2$ be two $U_q(\g)$-crystals.
A \defn{crystal morphism} $\psi \colon B_1 \to B_2$ is a map $B_1 \sqcup \{0\} \to B_2 \sqcup \{0\}$ with $\psi(0) = 0$ 
such that the following properties hold for all $b \in B_1$:
\begin{itemize}
\item[(1)] If $\psi(b) \in B_2$, then $\wt\bigl(\psi(b)\bigr) = \wt(b)$, $\varepsilon_i\bigl(\psi(b)\bigr) = \varepsilon_i(b)$, 
and $\varphi_i\bigl(\psi(b)\bigr) = \varphi_i(b)$.
\item[(2)] We have $\psi(e_i b) = e_i \psi(b)$ if $\psi(e_i b) \neq 0$ and $e_i \psi(b) \neq 0$.
\item[(3)] We have $\psi(f_i b) = f_i \psi(b)$ if $\psi(f_i b) \neq 0$ and $f_i \psi(b) \neq 0$.
\end{itemize}
An \defn{embedding} (resp.~\defn{isomorphism}) is a crystal morphism such that the induced map 
$B_1 \sqcup \{0\} \to B_2 \sqcup \{0\}$ is an embedding (resp.~bijection). A crystal morphism is \defn{strict} 
if it commutes with all crystal operators.

For further details regarding crystals, see~\cite{Bump.Schilling.2017,HK02}.

\subsection{Kirillov--Reshetikhin crystals}

Let $\g$ be of nonexceptional affine type. A \defn{Kirillov--Reshetikhin (KR) crystal} is a $U_q'(\g)$-crystal corresponding to a 
Kirillov--Reshetikhin (KR) module~\cite{HKOTY99, HKOTT02, OS08}.
KR crystals are finite crystals since KR modules are finite-dimensional.
Specifically, the KR crystal $B^{r,s}$ (where $r \in I_0$ and $s \in \ZZ_{>0}$) have a \emph{multiplicity free} 
decomposition as $U_q(\g_0)$-crystals:
\[
B^{r,s} \iso \bigoplus_{\lambda} B(\lambda)
\qquad\qquad (\text{as $U_q(\g_0)$-crystals})
\]
for certain (distinct) $\lambda \in P_0^+$.
Here $B(\lambda)$ is the highest weight $U_q(\g_0)$-crystal of highest weight $\lambda \in P_0^+$.
An explicit combinatorial construction of $B^{r,s}$ for all nonexceptional types was given in~\cite{FOS:2009} except for 
type $A_{2n}^{(2)\dagger}$.
For type $A_{2n}^{(2)\dagger}$, we can construct $B^{r,s}$ from the corresponding KR crystal in type $A_{2n}^{(2)}$ 
by relabeling the nodes $i \leftrightarrow n - i$, but we need to be careful about the weight.
In particular, $\g_0$ is type $B_n$ for type $A_{2n}^{(2)\dagger}$, and so we need \emph{twice} $\clfw_n$ of type $B_n$.
Hence $\kappa_r = 1$ unless $\g = A_{2n}^{(2)\dagger}$ and $r = n$, in which case $\kappa_n = 2$.

We note that there is a unique element $u_{s\kappa_r\clfw_r} \in B(s\kappa_r\clfw_r) \subset B^{r,s}$ of weight 
$s \kappa_r \clfw_r$, called the \defn{maximal element}.
Furthermore, it is known that tensor products of KR crystals $\bigotimes_{i=1}^N B^{r_i, s_i}$ are 
connected~\cite{FSS.2007, Okado.2013} with a unique maximal element 
$u_{s_1 \kappa_{r_1} \clfw_{r_1}} \otimes \cdots \otimes u_{s_N \kappa_{r_1} \clfw_{r_N}}$.
Therefore, there exists a unique $U_q'(\g)$-crystal isomorphism $R \colon B \otimes B' \to B' \otimes B$ called the 
\defn{combinatorial $R$-matrix} defined by $R(u \otimes u') = u' \otimes u$, where $u$ and $u'$ are the maximal 
elements of $B$ and $B'$ respectively.

\subsection{Dualities}
Denote by $\tau \colon I_0 \to I_0$ the (classical) diagram automorphism given by 
$-w_0 \alpha_i = \alpha_{\tau(i)}$ (equivalently 
$-w_0 \clfw_i = \clfw_{\tau(i)}$), where $w_0$ is the longest element of the Weyl group of $\g_0$.
Explicitly, we have
\begin{align*}
\tau(i) &= n+1-i && (\g_0 = A_n),
\\ \tau(i) & = \begin{cases}
i & i \neq n-1,n, \\
n & i = n - 1, \\
n-1 & i = n,
\end{cases}
&& (\g_0 = D_n \text{ for $n$ odd}),
\\ \tau(i) & = i && (\g_0 = D_n \text{ for $n$ even}, B_n, C_n).
\end{align*}
Define the \defn{Lusztig involution} $\lusz \colon B(\lambda) \to B(\lambda)$ as the unique involution satisfying
\begin{equation}
\label{eq:lusztig_involution}
(e_i b)^{\lusz} = f_{\tau(i)} b^{\lusz},
\qquad\qquad
(f_i b)^{\lusz} = e_{\tau(i)} b^{\lusz},
\qquad\qquad
\wt(b^{\lusz}) = w_0 \wt(b).
\end{equation}
We note that the Lusztig involution sends highest weight elements to a lowest weight element.
It can be extended to $\lusz \colon B^{r,s} \to B^{r,s}$ by defining $\tau(0) = 0$ and requiring
that $\lusz$ satisfies~\eqref{eq:lusztig_involution}.

Let $B^{\vee}$ denote the \defn{contragredient dual} crystal of $B$.
As a set $B^\vee=\{b^{\vee} \mid b \in B\}$ with the crystal structure given by
\[
(e_i b)^{\vee} = f_i b^\vee,
\qquad\qquad
(f_i b)^{\vee} = e_i b^\vee,
\qquad\qquad
\wt(b^{\vee}) = -\wt(b).
\]
For the highest weight $U_q(\g_0)$-crystal $B(\lambda)$, we note that $B(\lambda)^{\vee}$ is naturally 
isomorphic to $B(-w_0 \lambda)$.

We can also extend the Lusztig involution and the contragredient dual to tensor products by a natural isomorphism
\begin{equation}
\label{eq:box_factors}
(B_2 \otimes B_1)^{\Box} \iso B_1^{\Box} \otimes B_2^{\Box}
\end{equation}
given by $(b_2 \otimes b_1)^{\Box} = b_1^{\Box} \otimes b_2^{\Box}$, where $\Box \in \{\vee, \lusz\}$.

Next, for $\g$ of type $A_n^{(1)}$ or $D_n^{(1)}$, we consider the diagram automorphism $\sigma$ given by
\begin{align*}
\sigma(i) &= n+1-i \pmod{n+1} && (\g = A_n^{(1)}),
\\ \sigma(i) & = \begin{cases}
i & i \neq n-1,n, \\
n & i = n - 1, \\
n-1 & i = n,
\end{cases}
&& (\g = D_n^{(1)}).
\end{align*}
This induces a twisted crystal isomorphism from $B^{r,s}$ to $B^{\sigma(r),s}$ given by
\[
(f_i b)^{\sigma} = f_{\sigma(i)} b^{\sigma},
\qquad\qquad
(e_i b)^{\sigma} = e_{\sigma(i)} b^{\sigma},
\qquad\qquad
\Lambda_i \mapsto \Lambda_{\sigma(i)}.
\]
By abuse of notation, we denote this twisted crystal isomorphism by $\sigma$.
We can extend $\sigma$ to tensor products by a natural isomorphism
\[
(B_1 \otimes B_2)^{\sigma} \iso B_1^{\sigma} \otimes B_2^{\sigma}.
\]
For type $A_n^{(1)}$, we have $\sigma(b) = b^{\vee\lusz}$~\cite{SS:X=M}.
In type $D_n^{(1)}$ for nonspin columns, we have $\sigma(b)$ by interchanging the letters $n \leftrightarrow \mn$, which 
follows from considering the map on the highest weight $U_q(\g_0)$-crystal $B(\Lambda_1)$.
For the spin columns in type $D_n^{(1)}$, we have $\sigma \colon B(\clfw_{n-1}) \leftrightarrow B(\clfw_n)$ 
with $\sigma(b) = (s_1, \dotsc, s_{n-1}, -s_n)$ for $b = (s_1, \dotsc, s_{n-1}, s_n)$ in the $\pm$-vector description 
of~\cite{KN:1994}.

\subsection{Virtual crystals}
\label{sec:virtual_crystals}

Virtual crystals were introduced in~\cite{OSS03III, OSS03II} as a way to realize crystals for nonsimply-laced 
types as embeddings into simply-laced types. At the time these papers were written, the existence of some
of the KR crystals had not yet been established, explaining the choice of name ``virtual crystals.''
In the meantime, the existence of all KR crystals of nonexceptional types was established 
in~\cite{O07,OS08} and explicit combinatorial realizations were constructed in~\cite{FOS:2009}. Even though 
this means that virtual crystals are now true realizations of crystals, we will stick with the terminology virtual
crystals for historical reasons.

\begin{figure}
\begin{tikzpicture}[xscale=1.75,yscale=1.25]
\node at (0,0) {$A_{2n-1}^{(1)}$};
\node[dynkdot,label={left:$0$}] (A0) at (1,0) {};
\node[dynkdot,label={above:$1$}] (A1) at (2,-.5){};
\node[dynkdot,label={above:$2$}] (A2) at (3,-.5) {};
\node[dynkdot,label={above:$n-1$}] (A3) at (4,-.5) {};
\node[dynkdot,label={right:$n$}] (A4) at (5,0) {};
\node[dynkdot,label={above:$n+1$}] (A5) at (4,.5){};
\node[dynkdot,label={above:$2n-2$}] (A6) at (3,.5) {};
\node[dynkdot,label={above:$2n-1$}] (A7) at (2,.5) {};
\draw[-] (A0) -- (A1);
\draw[-] (A1) -- (A2);
\draw[dashed] (A2) -- (A3);
\draw[-] (A3) -- (A4);
\draw[-] (A4) -- (A5);
\draw[dashed] (A5) -- (A6);
\draw[-] (A6) -- (A7);
\draw[-] (A7) -- (A0);

\def\Coffset{-1.5}
\node at (-.25,\Coffset+0.22) {$C_n^{(1)}, D_{n+1}^{(2)},$};
\node at (-.25,\Coffset-0.22) {$A_{2n}^{(2)}, A_{2n}^{(2)\dagger}$};
\node[dynkdot,label={below:$0$}] (C0) at (1,\Coffset) {};
\node[dynkdot,label={below:$1$}] (C1) at (2,\Coffset){};
\node[dynkdot,label={below:$2$}] (C2) at (3,\Coffset) {};
\node[dynkdot,label={below:$n-1$}] (C3) at (4,\Coffset) {};
\node[dynkdot,label={below:$n$}] (C4) at (5,\Coffset) {};
\draw[-] (C0.30) -- (C1.150);
\draw[-] (C0.330) -- (C1.210);
\draw[-] (C1) -- (C2);
\draw[dashed] (C2) -- (C3);
\draw[-] (C3.30) -- (C4.150);
\draw[-] (C3.330) -- (C4.210);
\path[-latex,dashed,color=blue,thick]
 (A0) edge (C0)
 (A4) edge (C4);
\foreach \x[evaluate={\dx=8-\x}] in {1,2,3} {
\draw[-latex,dashed,color=blue,thick]
 (A\x) .. controls (\x+.8,-.8) and (\x+.8,-1.2) .. (C\x);
\draw[-latex,dashed,color=blue,thick]
 (A\dx) .. controls (\x+1.5,-.4) and (\x+1.5,-1) .. (C\x);
}
\end{tikzpicture}
\caption{The diagram folding of type $\virtual{\g} = A_{2n-1}^{(1)}$ onto type $\g = C_n^{(1)}, D_{n+1}^{(2)}, A_{2n}^{(2)}, A_{2n}^{(2)\dagger}$.}
\label{fig:diagram_foldings_A}
\end{figure}

\begin{figure}
\begin{tikzpicture}[xscale=1.75,yscale=1.25]
\node at (0,0) {$D_{n+1}^{(1)}$};
\node[dynkdot,label={left:$0$}] (D0) at (1,.5) {};
\node[dynkdot,label={left:$1$}] (D1) at (1,-.5){};
\node[dynkdot,label={above:$2$}] (D2) at (2,0) {};
\node[dynkdot,label={above:$3$}] (D3) at (3,0) {};
\node[dynkdot,label={above:$n-1$}] (D4) at (4,0) {};
\node[dynkdot,label={right:$n$}] (D5) at (5,-.5) {};
\node[dynkdot,label={right:$n+1$}] (D6) at (5,.5) {};
\draw[-] (D0) -- (D2);
\draw[-] (D1) -- (D2);
\draw[-] (D2) -- (D3);
\draw[dashed] (D3) -- (D4);
\draw[-] (D4) -- (D5);
\draw[-] (D4) -- (D6);

\def\Coffset{-1.8}
\node at (0,\Coffset) {$B_n^{(1)}, A_{2n-1}^{(2)}$};
\node[dynkdot,label={left:$0$}] (B0) at (1,\Coffset+.5) {};
\node[dynkdot,label={left:$1$}] (B1) at (1,\Coffset-.5){};
\node[dynkdot,label={below:$2$}] (B2) at (2,\Coffset) {};
\node[dynkdot,label={below:$3$}] (B3) at (3,\Coffset) {};
\node[dynkdot,label={below:$n-1$}] (B4) at (4,\Coffset) {};
\node[dynkdot,label={below:$n$}] (B5) at (5,\Coffset) {};
\draw[-] (B0) -- (B2);
\draw[-] (B1) -- (B2);
\draw[-] (B2) -- (B3);
\draw[dashed] (B3) -- (B4);
\draw[-] (B4.30) -- (B5.150);
\draw[-] (B4.330) -- (B5.210);

\path[-latex,dashed,color=blue,thick]
 (D2) edge (B2)
 (D3) edge (B3)
 (D4) edge (B4);
\draw[-latex,dashed,color=blue,thick]
 (D0) .. controls (0.55,-.05) and (0.55,-0.65) .. (B0);
\draw[-latex,dashed,color=blue,thick]
 (D1) .. controls (0.55,-1) and (0.55,-1.55) .. (B1);
\draw[-latex,dashed,color=blue,thick]
 (D5) .. controls (4.7,-0.8) and (4.7,-1.3) .. (B5);
\draw[-latex,dashed,color=blue,thick]
 (D6) .. controls (5.75,-.3) and (5.75,-1.3) .. (B5);
\end{tikzpicture}
\caption{The diagram folding of type $\virtual{\g} = D_{n+1}^{(1)}$ onto type $\g = B_n^{(1)}, A_{2n-1}^{(2)}$.}
\label{fig:diagram_foldings_D}
\end{figure}

We consider the Dynkin diagram folding that arises from the natural embeddings 
$\g \lhook\joinrel\longrightarrow \virtual{\g}$ given in~\cite{JM85}:
\begin{equation}
\begin{aligned}
	C_n^{(1)}, A_{2n}^{(2)}, A_{2n}^{(2)\dagger}, D_{n+1}^{(2)} & \lhook\joinrel\longrightarrow A_{2n-1}^{(1)},\\
	 B_n^{(1)}, A_{2n-1}^{(2)} & \lhook\joinrel\longrightarrow D_{n+1}^{(1)}.
\end{aligned}
\end{equation}
Let $I^X$ denote the index set of ambient type $X$ ({\it i.e.}, $\virtual{\g}$ is of type $A_{2n-1}^{(1)}$ or $D_{n+1}^{(1)}$).
We denote the corresponding map on the index sets by $\phi \colon I^X \searrow I$ as in 
Figure~\ref{fig:diagram_foldings_A} (resp.~Figure~\ref{fig:diagram_foldings_D}) for $\virtual{\g}$ of type 
$A_{2n-1}^{(1)}$ (resp.~$D_{n+1}^{(1)}$).
For ease of notation, if $Z$ is an object for type $\g$, we denote the corresponding object for type $\virtual{\g}$ 
(type $A_{2n-1}^{(1)}$ or $D_{n+1}^{(1)}$) by $Z^X$.
For example, the weight lattice $P$ is an object for type $\g$, and we denote the corresponding weight lattice for 
type $\virtual{\g}$ by $P^X$. We define the \defn{scaling factors} $\gamma = (\gamma_a)_{a \in I}$ by
\[
	\gamma_a = \begin{cases}
	(2,2,\dotsc,2,1) & \text{for type $B_n^{(1)}$,}\\
	(2,1,\dotsc,1,2) & \text{for type $C_n^{(1)}$,}\\
	(1,1,\dotsc,1,1) & \text{for type $A_{2n-1}^{(2)}$, $D_{n+1}^{(2)}$,}\\
	(1,1,\dotsc,1,2) & \text{for type $A_{2n}^{(2)}$,}\\
	(2,1,\dotsc,1,1) & \text{for type $A_{2n}^{(2)\dagger}$.}
	\end{cases}
\]
Note that if $\lvert\phi^{-1}(a)\rvert \neq 1$, then $\gamma_a = 1$.

Furthermore, we have a natural embedding $\Psi \colon P \to P^X$ given by
\begin{align*}
	\Lambda_a & \mapsto \gamma_a \sum_{b \in \phi^{-1}(a)} \Lambda^X_b,\\
	\alpha_a & \mapsto \gamma_a \sum_{b \in \phi^{-1}(a)} \alpha^X_b,
\end{align*}
where the map on simple roots is induced from the embedding of the root lattice into the weight lattice.
Note that this implies that $\delta \mapsto c_0 \gamma_0 \delta^X$.

\begin{definition}
\label{definition.virtual}
Let $B^X$ be a $U_q'(\virtual{\g})$-crystal and $\virtual{B} \subset B^X$.
Let $\phi$ and $(\gamma_a)_{a \in I}$ be the folding and the scaling factors given above.
The \defn{virtual crystal operators} (of type $\g$) are defined as
\[
	\virtual{e}_a := \prod_{b \in \phi^{-1}(a)} (e^X_b)^{\;\gamma_a},
	\qquad\qquad
	\virtual{f}_a := \prod_{b \in \phi^{-1}(a)} (f^X_b)^{\;\gamma_a}.
\]
A \defn{virtual crystal} is the quadruple $(\virtual{B}, B^X, \phi, (\gamma_a)_{a \in I})$ such that $\virtual{B}$ has a 
$U_q'(\g)$-crystal structure defined by
\begin{equation}
\label{eq:virtual_crystal}
\def\arraystretch{1.3}
\begin{gathered}
	e_a := \virtual{e}_a, \hspace{40pt} f_a := \virtual{f}_a, \\
	\hspace{80pt}\varepsilon_a := \gamma_a^{-1} \varepsilon^X_b, \hspace{40pt} \varphi_a := \gamma_a^{-1} \varphi^X_b, \hspace{20pt} (b \in \phi^{-1}(a))\\
	\wt := \Psi^{-1} \circ \wt^X.
\end{gathered}
\end{equation}
\end{definition}

Consider a set $\virtual{B} \subset B^X$ with a fixed $\phi$ and $(\gamma_a)_{a \in I}$.
If for all $v \in \virtual{B}$ and $a \in I$, we have
\begin{itemize}
\item $\varepsilon^X_b(v) = \varepsilon^X_{b'}(v)$ for all $b, b' \in \phi^{-1}(a)$ and
\item $\varepsilon^X_b(v) / \gamma_a \in \ZZ$ for all $b \in \phi^{-1}(a)$,
\end{itemize}
then we say that $B^X$ is \defn{aligned}.
Note that in particular Definition~\ref{definition.virtual} requires that virtual crystals are aligned.

When there is no danger of confusion, we simply denote the virtual crystal by $\virtual{B}$.
We say that a type $\g$ crystals $B$ is \defn{realized} as a virtual crystal $\virtual{B}$ if there exists a $U_q'(\g)$-crystal 
isomorphism $\chi \colon B \to \virtual{B}$.
We denote the composition of $\chi$ with the natural inclusion $\virtual{B} \subset B^X$ by $\emb \colon B \to B^X$, 
which we call the \defn{virtualization map}.
Furthermore, we will also denote $\virtual{\lambda} = \Psi(\lambda)$.

It is straightforward to see that virtual crystals are closed under direct sums.
Moreover, they are closed under tensor products.

\begin{proposition}[{\cite[Prop.~6.4]{OSS03III}}]
\label{prop:virtual_tensor_category}
Virtual crystals form a tensor category.
\end{proposition}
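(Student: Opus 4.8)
The plan is to establish the two ingredients of a tensor category, namely that the class of virtual crystals of a fixed type $\g$ (with a fixed folding $\phi$ and scaling factors $\gamma = (\gamma_a)_{a \in I}$) is closed under a tensor product operation and that this operation satisfies the tensor-category axioms; together with the closure under direct sums noted above this gives the statement. I would fix two virtual crystals $(\virtual{B}_1, B_1^X, \phi, \gamma)$ and $(\virtual{B}_2, B_2^X, \phi, \gamma)$ realizing type $\g$ crystals $B_1, B_2$ via isomorphisms $\chi_1, \chi_2$, and take as candidate tensor product the subset $\virtual{B}_2 \otimes \virtual{B}_1 \subseteq B_2^X \otimes B_1^X$ with ambient crystal $B_2^X \otimes B_1^X$ and the same data $\phi, \gamma$. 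Everything then reduces to two claims: (i) this subset is aligned and is closed under the virtual operators $\virtual{e}_a, \virtual{f}_a$, so that \eqref{eq:virtual_crystal} equips it with a genuine type $\g$ virtual crystal structure; and (ii) that structure coincides with the type $\g$ tensor product $B_2 \otimes B_1$ computed by the signature rule, i.e.\ $\emb_{B_2} \otimes \emb_{B_1}$ is a virtualization of $B_2 \otimes B_1$.

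For alignment, fix $v = v_2 \otimes v_1 \in \virtual{B}_2 \otimes \virtual{B}_1$ and $a \in I$, and use the ambient tensor-product formula
\[
\varepsilon^X_b(v) = \max\bigl(\varepsilon^X_b(v_1),\ \varepsilon^X_b(v_2) - \langle (\alpha^X_b)^{\vee}, \wt^X(v_1)\rangle\bigr).
\]
Each factor is already aligned, so $\varepsilon^X_b(v_1)$ and $\varepsilon^X_b(v_2)$ are independent of $b \in \phi^{-1}(a)$ and divisible by $\gamma_a$; moreover $\wt^X(v_1) = \Psi(\wt v_1)$ and the definition of $\Psi$ gives $\langle (\alpha^X_b)^{\vee}, \Psi(\mu)\rangle = \gamma_a \langle \alpha_a^{\vee}, \mu\rangle$ for all $b \in \phi^{-1}(a)$ and $\mu \in P$, so the correction term is also constant on $\phi^{-1}(a)$ and a multiple of $\gamma_a$. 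Hence $\varepsilon^X_b(v)$ is independent of $b \in \phi^{-1}(a)$ and divisible by $\gamma_a$, and the analogous computation handles $\varphi^X_b$; thus $B_2^X \otimes B_1^X$ is aligned along $\virtual{B}_2 \otimes \virtual{B}_1$.

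The main obstacle is matching the two tensor-product rules, which I would settle by a signature-block analysis. The nodes of $\phi^{-1}(a)$ are pairwise non-adjacent in the ambient Dynkin diagram, so the $e^X_b$ with $b \in \phi^{-1}(a)$ commute and $\virtual{e}_a = \prod_{b \in \phi^{-1}(a)}(e^X_b)^{\gamma_a}$ is well defined independent of order. Fix one such $b$. In the concatenated $\pm$-signature of $v_2 \otimes v_1$ for $b$, the contribution of each factor $v_i$ is $\varphi^X_b(v_i) = \gamma_a \varphi_a(\chi_i^{-1} v_i)$ minus-signs followed by $\varepsilon^X_b(v_i) = \gamma_a \varepsilon_a(\chi_i^{-1} v_i)$ plus-signs, both multiples of $\gamma_a$ by alignment and \eqref{eq:virtual_crystal}. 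Therefore the cancellation of $+-$ pairs proceeds in blocks of size $\gamma_a$, and the reduced type $X$ signature is exactly $\gamma_a$ copies of the reduced type $\g$ signature of $\chi_2^{-1}(v_2) \otimes \chi_1^{-1}(v_1)$. Consequently $(e^X_b)^{\gamma_a}$ acts entirely within the single factor on which $e_a$ acts in type $\g$; applying this over all $b \in \phi^{-1}(a)$ and using that each $\virtual{B}_i$ realizes $B_i$ shows that $\virtual{e}_a(v_2 \otimes v_1)$ lies in $\virtual{B}_2 \otimes \virtual{B}_1$ and equals $e_a$ of the type $\g$ tensor product (with the empty-block case giving $\virtual{e}_a v = 0$ exactly when $e_a$ annihilates the type $\g$ element). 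The symmetric argument for $\virtual{f}_a$ completes claims (i) and (ii); this block structure forced by alignment is the technical heart.

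Finally, the tensor-category axioms follow formally. Associativity holds because it holds both for the ambient tensor product $B_3^X \otimes B_2^X \otimes B_1^X$ and for the type $\g$ tensor product, and by (ii) the virtualization map intertwines them, so the associator is inherited; the unit object is the virtual crystal carried by the singleton ambient crystal of weight $0$, whose virtualization is the trivial type $\g$ crystal; and morphisms of virtual crystals are the type $\g$ crystal morphisms of the realized crystals, which are preserved by the tensor product through the functoriality of $\emb$. I expect the signature-block step to be the only genuinely nontrivial point, the remainder being bookkeeping on the tensor-product formulas and the definition of $\Psi$.
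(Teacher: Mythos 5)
The paper does not prove this proposition; it imports it verbatim from \cite[Prop.~6.4]{OSS03III}. Your argument is essentially the proof given in that reference: alignment of the tensor product via the $\max$-formulas for $\varepsilon^X_b,\varphi^X_b$ together with $\langle(\alpha^X_b)^{\vee},\Psi(\mu)\rangle=\gamma_a\langle\alpha_a^{\vee},\mu\rangle$, and then the observation that because all signature lengths are multiples of $\gamma_a$ (and the nodes in each fiber $\phi^{-1}(a)$ are mutually orthogonal), the $+-$ cancellation and the action of $(e^X_b)^{\gamma_a}$ proceed in blocks of size $\gamma_a$ confined to a single tensor factor, matching the type $\g$ signature rule. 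The proof is correct and takes the same route as the cited source.
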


Next, we provide explicit virtual crystal realizations for all nonexceptional types. We distinguish the cases
that embed into type $A_{2n-1}^{(1)}$ (denoted \defn{ambient type $A$}) and those that embed into
type $D_{n+1}^{(1)}$ (denoted \defn{ambient type $D$}).

We are using the embeddings of types $C_n^{(1)}$, $A_{2n}^{(2)}$, $A_{2n}^{(2)\dagger}$, $D_{n+1}^{(2)}$
into type $A_{2n-1}^{(1)}$ as in~\cite{OSS03III}:
\begin{subequations}
\label{eq:ambient_emb}
\begin{equation}
\label{equation.embedding A}
\virtual{B}^{r,s} = \begin{cases}
	B^{r,s}_A \otimes B^{2n-r,s}_A & \text{if $r<n$,}\\
	B^{n,s}_A & \text{if $r=n$ for type $D_{n+1}^{(2)}$,}\\
	\left(B^{n,s}_A\right)^{\otimes 2} & \text{if $r=n$ for type $A_{2n}^{(2)}$, $A^{(2)\dagger}_{2n}$,}\\
	B^{n,2s}_A & \text{if $r=n$ for type $C_n^{(1)}$.}
	\end{cases}
\end{equation}
It was shown in~\cite[Thm.~5.1]{Okado.2013} that these crystals are aligned, proving~\cite[Conj. 6.6]{OSS03III}.
Hence they give realizations for the corresponding KR crystals.

For types $B_n^{(1)}$ and $A_{2n-1}^{(2)}$, we use the realizations
\begin{equation}
\label{equation.embedding D}
\virtual{B}^{r,s} = \begin{cases}
	B^{r,s}_D & \text{if $r<n$ for type $A_{2n-1}^{(2)}$}\\
	B^{r,2s}_D & \text{if $r<n$ for type $B_n^{(1)}$,}\\
	B^{n,s}_D \otimes B^{n+1,s}_D & \text{if $r=n$ for type $A_{2n-1}^{(2)}$, $B_n^{(1)}$.}
	\end{cases}
\end{equation}
\end{subequations}
The realization of the first two lines are given in~\cite[Thm. 5.14]{schilling.scrimshaw.2015}.
The last realization is given in~\cite[Thm.~5.1(2-ii)]{Okado.2013}.

For $B = \bigotimes_{k=1}^N B^{r_k,s_k}$, we will use the notation $\virtual{B} = \bigotimes_{k=1}^N \virtual{B}^{r_k,s_k}$.

\subsection{Rigged configurations}

Fix a tensor product of KR crystals $B$.
Let $L(B) := (L_s^{(r)})_{r \in I_0, s \in \ZZ_{>0}}$, where $L_s^{(r)}$ equals the number of tensor factors 
$B^{r,s}$ occurring in $B$. When $B$ is clear, we denote this simply by $L$.

A \defn{rigged configuration} $(\nu, J) \in \rc(L)$ is a sequence of partitions $(\nu^{(a)})_{a \in I_0}$, where to each
row of $\nu^{(a)}$ we associate an integer\footnote{For type $A_{2n}^{(2)\dagger}$, we require riggings for odd length 
rows of $\nu^{(n)}$ to instead be in $\ZZ + 1/2$.} $x$, called \defn{rigging}. The pair $(i,x)$, where $i$ is the length of a row
and $x$ is the associated rigging, is called a \defn{string}.
Let $J_i^{(a)}$ denote the multiset of riggings of rows of length $i$ in $\nu^{(a)}$.
The rigging $x \in J_i^{(a)}$ needs to satisfy the condition
\[
	0 \leqslant x \leqslant p_i^{(a)}(\nu; L),
\]
where $p_i^{(a)}(\nu; L)$ is the \defn{vacancy number} 
\[
	p_i^{(a)}(\nu; L) = \sum_{j \in \ZZ_{>0}} L_j^{(a)} \min(i, j) - \sum_{b \in I_0} \frac{A_{ab}}{\gamma_b} 
	\sum_{j \in \ZZ_{>0}} \min(\gamma_a i, \gamma_b j) m_j^{(b)}
\]
except when $\g$ is of type $A_{2n}^{(2)}$ or $A_{2n}^{(2)\dagger}$, where
\[
	p_i^{(a)}(\nu; L) = \sum_{j \in \ZZ_{>0}} L_j^{(a)} \min(i, j) - \sum_{b \in I_0} \frac{A_{ab}}{1 + \delta_{x n}} 
	\sum_{j \in \ZZ_{>0}} \min(i, j) m_j^{(b)},
\]
with $x = b, a$ for type $A_{2n}^{(2)}, A_{2n}^{(2)\dagger}$ respectively.
If $\g$ is of simply-laced type, we set $\gamma_a = 1$ for all $a \in I$.
Here $m_i^{(a)}$ is the number of parts of size $i$ in $\nu^{(a)}$.
When $\nu$ and $L$ (or $B$) are clear from context, we simply write $p_i^{(a)}$.
As usual for partitions, we identify two rigged partitions $\nu^{(a)}$ and $\widetilde{\nu}^{(a)}$ if their parts
(with their riggings) are permuted.
Let $(\nu, J)^{(a)}$ be the partition $\nu^{(a)}$ with all of its associated riggings.
We say a string of $(\nu, J)^{(a)}$ is \defn{singular} if it is of the form $(i, p_i^{(a)})$.

\begin{example}
\label{ex:rc_example}
Let $B = B^{2,4} \otimes B^{1,2} \otimes B^{5,1} \otimes B^{3,2}$ in type $C_5^{(1)}$.
The following is a rigged configuration in $\rc(L(B), \clfw_1 + \clfw_2 + \clfw_4)$:
\[
(\nu, J) =
\begin{tikzpicture}[scale=0.29, baseline=-20]
\rpp{5,1}{0,0}{1,0}
\begin{scope}[xshift=8cm]
\rpp{5,4,2}{0,1,0}{1,1,1}
\end{scope}
\begin{scope}[xshift=16cm]
\rpp{5,4,2,2}{0,0,0,0}{0,0,0,0}
\end{scope}
\begin{scope}[xshift=24cm]
\rpp{5,4,2,2}{0,0,0,0}{0,0,0,0}
\end{scope}
\begin{scope}[xshift=32cm]
\rpp{3,2,1,1}{0,1,1,0}{0,1,1,1}
\end{scope}
\end{tikzpicture}.
\]
Here the vacancy number $p_i^{(a)}$ is written to the left of each part of length $i$ in the partition $\nu^{(a)}$ and
the riggings in $J_i^{(a)}$ appear to the right of the parts of length $i$ in $\nu^{(a)}$.
\end{example}

\begin{remark}
The rigged configurations that we give here differ slightly from those given in~\cite{OSS:2003a}.
In particular, for type $B_n^{(1)}$ (resp.\ type $C_n^{(1)}$), our rigged configurations use full-width boxes instead 
of half-width boxes (resp.\ double width boxes) for $\nu^{(n)}$. To go to the rigged configurations of~\cite{OSS:2003a}, 
simply half (resp.\ double) the partition $\nu^{(n)}$ for type $B_n^{(1)}$ (resp.\ type $C_n^{(1)}$).
\end{remark}

Note that for $i \gg 1$, we have $p_i^{(a)} = p_{i+1}^{(a)}$, and hence $p_{\infty}^{(a)} := p_i^{(a)}$ for some $i \gg 1$.
This can also be seen by directly substituting $i = \infty$, which results in $\min(\infty, j) = j$.
Define the weight of a rigged configuration
\[
\wt(\nu, J) = k_0 \Lambda_0 + \sum_{a \in I_0} \kappa_a p_{\infty}^{(a)} \Lambda_a,
\]
where $k_0$ is such that $\langle c, \wt(\nu, J) \rangle = 0$.
There exists an extension of rigged configurations to a $U_q(\g_0)$-crystal that was given 
in~\cite{Sch:2006, schilling.scrimshaw.2015}.
When restricting to a given weight space, we write
\[
	\rc(L,\lambda) = \{(\nu, J) \in \rc(L) \mid \wt_0(\nu, J) = \lambda\},
\]
where $\wt_0(\nu, J)$ is the $\g_0$-weight.
The \defn{complement rigging} involution $\theta \colon \rc(L) \to \rc(L)$ is defined by replacing every rigging 
$x \in J_i^{(a)}$ with its corresponding \defn{corigging} $p_i^{(a)} - x$.

Rigged configurations are known to be well-behaved under the embeddings given 
by~\eqref{eq:ambient_emb}~\cite{OSS03III, OSS03II, schilling.scrimshaw.2015}.
We can explicitly construct the embedding $\emb \colon \rc(L) \to \vrc(\virtual{L})$, where $\virtual{L}(B) := L(\virtual{B})$
and the embedded rigged configuration $(\virtual{\nu}, \virtual{J}) := \emb(\nu, J)$ is given by
\begin{subequations}
\label{eq:virtual_RC}
\begin{align}
\label{eq:virtual_m} 
	\virtual{m}_{\gamma_a i}^{(b)} & = m_i^{(a)},\\ 
\label{eq:virtual_J} 
	\virtual{J}_{\gamma_a i}^{(b)} & = \gamma_a J_i^{(a)},
\end{align}
\end{subequations}
where $\virtual{m}_j^{(b)} = 0$ when $j \notin \gamma_a \ZZ_{>0}$,
for all $b \in \phi^{-1}(a)$, except for $a=n$ in types $A_{2n}^{(2)}$ and $A_{2n}^{(2)\dagger}$, where
\begin{subequations}
\label{eq:virtual_RC_A2}
\begin{align}
	\virtual{m}_{i}^{(n)} & = m_i^{(n)},\\ 
	\virtual{J}_i^{(n)} & = 2 J_i^{(n)}.
\end{align}
\end{subequations}
Moreover, this completely characterizes $\emb\bigl(\rc(L)\bigr)$.
Also note that
\begin{equation}
\label{eq:virtual_vacancy}
\virtual{p}_i^{(b)} = \begin{cases}
2 p_i^{(n)} & \text{if } a = n \text{ for type } A_{2n}^{(2)}, A_{2n}^{(2)\dagger}, \\
\gamma_a p_i^{(a)} & \text{otherwise,}
\end{cases}
\end{equation}
for all $a \in I_0$, $i \in \ZZ_{>0}$ and $b \in \phi^{-1}(a)$.

\subsection{The bijection $\Phi$ for types $A_n^{(1)}$ and $D_n^{(1)}$}
\label{subsection.bij AD}

Let $\g$ be of type $A_n^{(1)}$ or $D_n^{(1)}$, and let $B$ be a tensor product of KR crystals.
The set of \defn{paths} corresponding to $B$ consists of all $I_0$-highest weight elements in $B$:
\[
	\p(B) = \{ b \in B \mid e_i b=0 \text{ for $i\in I_0$}\}.
\]
When also restricting to a given weight space, we write
\[
	\p(B,\lambda) = \{b \in \p(B) \mid \wt_0(b) = \lambda\},
\]
where $\wt_0$ is the $\g_0$-weight.

For type $A_n^{(1)}$, a bijection $\Phi \colon \p(B) \to \rc(L)$ was established in~\cite{KSS:2002}, generalizing ideas 
in~\cite{KKR:1986,KR86}. Moreover, the paths $\p(B)$ are described using the usual semistandard tableaux whose 
entries are in $\{1,2, \dotsc, n+1\}$. For type $D_n^{(1)}$, a bijection $\Phi \colon \p(B) \to \rc(L)$ was established 
in~\cite{S:2005,OSS:2013,OSSS16}. In this case, the KR crystals are described in terms of \defn{Kirillov--Reshetikhin 
(KR) tableaux}~\cite{S:2005,OSS:2013} rather than Kashiwara--Nakashima tableaux~\cite{KN:1994} that were previously 
used to describe $B^{r,s}$~\cite{FOS:2009}. In the KR tableau formulation, the elements 
in $B^{r,s}$ are represented by rectangular tableaux of width $s$ and height $r$ with entries in $B^{1,1}$.

For both types $A_n^{(1)}$ and $D_n^{(1)}$, the bijection $\Phi \colon \p(B) \to \rc(L)$ is defined recursively.
Let $B^{\bullet}$ be a tensor product of KR crystals.
On the path side, the composition of the following maps is used:
\begin{align*}
	\lh \colon \p(B^{1,1} \otimes B^{\bullet}) & \to \p(B^{\bullet}), &&\\ 
	\young(b) \otimes b^{\bullet} & \mapsto b^{\bullet}, &&\\[3mm]
	\lhs \colon \p(B^{r,1} \otimes B^{\bullet}) & \to \p(B^{\bullet}) && \hspace{-0.8cm}(r = n-1, n, \; \g = D_n^{(1)}) \\
	b \otimes b^{\bullet} & \mapsto b^{\bullet}, &\\[3mm]
	\lb \colon \p(B^{r,1} \otimes B^{\bullet}) & \to \p(B^{1,1} \otimes B^{r-1,1} \otimes B^{\bullet}) && \hspace{-1.1cm}
	 \left({\arraycolsep=1.5pt \begin{array}{cl} 1 < r \leqslant n, & \g = A_n^{(1)} \\ 1 < r < n-1, & \g = D_n^{(1)} \end{array}} \!\right),\\
	 \begin{array}{|c|} \hline b_1 \\\hline \vdots \\\hline b_{r-1} \\\hline b_r \\\hline \end{array} \otimes b^{\bullet} & 
	 \mapsto 
	\begin{array}{|c|} \hline b_r \\\hline \end{array} \otimes \begin{array}{|c|} \hline b_1 \\\hline \vdots \\\hline b_{r-1} \\
	\hline \end{array} \otimes b^{\bullet}, &&\\[3mm]
	 \ls \colon \p(B^{r,s} \otimes B^{\bullet}) & \to \p(B^{r,1} \otimes B^{r,s-1} \otimes B^{\bullet}) && (s \geqslant 2),\\
	\begin{array}{|c|c|c|c|} \hline b_{11} & b_{12} & \cdots & b_{1s} \\\hline \vdots & \vdots & \ddots & \vdots \\\hline b_{r1} 
	& b_{r2} & \cdots & b_{rs} \\\hline \end{array} \otimes b^{\bullet} & \mapsto \begin{array}{|c|} \hline b_{11} \\\hline \vdots 
	\\ \hline b_{r1} \\\hline \end{array} \otimes  \begin{array}{|c|c|c|} \hline b_{12} & \cdots & b_{1s} \\\hline \vdots & \ddots 
	& \vdots \\ \hline b_{r2} & \cdots & b_{rs} \\\hline \end{array} \otimes b^{\bullet},
	&& 
\end{align*}
until the empty path is reached. 

The analogous maps on the rigged configuration side are denoted by:
\begin{subequations}
\label{equation.rc maps}
\begin{align}
	\delta \colon \rc(L) & \to \rc\bigl(\lh(L)\bigr),\\
	\deltas \colon \rc(L) & \to \rc\bigl(\lhs(L)\bigr) && (r = n-1, n, \; \g = D_n^{(1)}),\\
	\beta \colon \rc(L) & \to \rc\bigl(\lb(L)\bigr) 
	&&\left(\begin{array}{cc} 1 < r \leqslant n, & \g = A_n^{(1)} \\ 1 < r < n-1, & 
	\g = D_n^{(1)}\end{array} \right),\\
	\gamma \colon \rc(L) & \to \rc\bigl(\ls(L)\bigr) && (s \geqslant 2),
\end{align}
\end{subequations}
where $\lx\bigl(L(B)\bigr) := L\bigl(\lx(B)\bigr)$ for $\mathrm{x} = \mathrm{h}, \mathrm{h}_{\mathrm{sp}},
\mathrm{b, s}$.

The map $\delta \colon \rc(L) \to \rc\bigl(\lh(L)\bigr)$ can be constructed by the following recursive 
procedure. Start with the highest weight crystal element $b_0 = u_{\clfw_1}$ and set $\ell_0 = 1$.
Suppose $b_i$ and $\ell_i$ have been constructed for $0\leqslant i <j$. Consider step $j$.
Let $\ell_j$ denote a minimal $i_a \geqslant \ell_{j-1}$ for all $a \in I_0$ such that $f_a b_{j-1} \neq 0$ and 
$(\nu,J)^{(a)}$ has a singular string of length $i_a$ that has not been previously selected.\footnote{If there are multiple 
such $a \in I_0$, then the resulting rigged configuration is independent of this choice.}
If no such string exists, terminate, set all $\ell_{j'} = \infty$ for $j' \geqslant j$ and return $b_{j-1}$.
Otherwise set $b_j = f_a b_{j-1}$ and $\ell_j=i_a$ and repeat.
We form a new rigged configuration by removing a box from each string selected by the algorithm above, 
making the resulting rows singular and keeping all other strings the same.

The map $\deltas \colon \rc(L) \to \rc\bigl(\lhs(L)\bigr)$ is analogous to $\delta$ but using 
$B(\clfw_r)$, for $r = n-1, n$, instead of $B(\clfw_1)$.

We note that $\delta$ was first described for type $A_n^{(1)}$ in~\cite{KR86, KKR:1986} and for type
 $D_n^{(1)}$ in~\cite{OSS:2003a}.
The map $\delta$ for the other nonexceptional types was described in~\cite{OSS:2003a}.
 The map $\deltas$ as given here was first described in~\cite{OS12, Scrimshaw17}, where it was shown to be equivalent 
 to the definition given in~\cite{S:2005}.
 
The map $\beta$ is given for all nonexceptional types by adding a length 1 singular string to $(\nu,J)^{(a)}$ for all $a < r$.
The map $\gamma$ is the identity map, but we note that some vacancy numbers will change.
In the sequel, we will need to indicate the left factor when applying $\gamma$, and so we denote this by 
$\gamma_{r,s} \colon \rc\bigl(L(B^{r,s} \otimes B^{\bullet})\bigr) \to \rc\bigl(L(B^{r,1} \otimes B^{r,s-1} \otimes B^{\bullet})\bigr)$.

We also have right analogs of the maps given above.
Let $\mathbin{\uparrow}(b)$ denote the map which takes an element $b \in B(\lambda) \subset B$ to the 
corresponding $I_0$-highest weight element $u_{\lambda} \in B(\lambda)$. We define
\begin{align*}
\widetilde{\delta} & := \theta \circ \delta \circ \theta,
&
\rh & := \mathbin{\uparrow} \circ \lusz \circ \lh \circ \lusz = \hwstar \circ \lh \circ \hwstar,
\\
\deltast & := \theta \circ \deltas \circ \theta,
&
\rh_{\mathrm{sp}} & := \lusz \circ \lhs \circ \lusz = \hwstar \circ \lhs \circ \hwstar,
\\
\widetilde{\beta} & := \theta \circ \beta \circ \theta,
&
\rb & := \lusz \circ \lb \circ \lusz = \hwstar \circ \lb \circ \hwstar,
\\
\widetilde{\gamma} & := \theta \circ \gamma \circ \theta,
&
\rs & := \lusz \circ \ls \circ \lusz = \hwstar \circ \ls \circ \hwstar,
\end{align*}
where $\hwstar := \mathbin{\uparrow} \circ \lusz$.

\begin{example}
Let $B = B^{4,1} \otimes B^{2,2}$ in type $D_4^{(1)}$.
We begin with a rigged configuration in $\rc(L(B), \clfw_4)$ and the corresponding element under $\Phi$ and apply the 
following sequence of maps:
\[
\def\lr#1{\raisebox{-.3ex}{$#1$}}  
\begin{tikzpicture}[scale=0.48]
	\fill[black!20] (16,-1) rectangle (17,-2);
	\node[scale=.7] at (16.5, -1.5) {$\ell_1$};
	\fill[black!20] (6,-2) rectangle (7,-3);
	\node[scale=.7] at (6.5, -2.5) {$\ell_2$};
	\fill[black!20] (11,-1) rectangle (12,-2);
	\node[scale=.7] at (11.5, -1.5) {$\ell_3$};
	\fill[black!20] (1,-1) rectangle (2,-2);
	\node[scale=.7] at (1.5, -1.5) {$\ell_4$};
	\fill[black!20] (6,-1) rectangle (7,-2);
	\node[scale=.7] at (6.5, -1.5) {$\ell_5$};
	\rpp{2}{0}{0}
	\begin{scope}[xshift=5cm]
	\rpp{2,2}{0,0}{0,0}
	\end{scope}
	\begin{scope}[xshift=10cm]
	\rpp{2}{0}{0}
	\end{scope}
	\begin{scope}[xshift=15cm]
	\rpp{2}{1}{1}
	\end{scope}
\draw[->] (7.5,-3.25) -- (7.5,-4.75) node[midway,right] {$\deltas$};
\begin{scope}[yshift=-4cm]
	\rpp{1}{0}{0}
	\begin{scope}[xshift=5cm]
	\rpp{1,1}{0,0}{0,0}
	\end{scope}
	\begin{scope}[xshift=10cm]
	\rpp{1}{0}{0}
	\end{scope}
	\begin{scope}[xshift=15cm]
	\rpp{1}{0}{0}
	\end{scope}
\end{scope}
\draw[->] (7.5,-7.25) -- (7.5,-8.75) node[midway,right] {$\gamma$};
\begin{scope}[yshift=-8cm]
	\rpp{1}{0}{0}
	\begin{scope}[xshift=5cm]
	\rpp{1,1}{1,1}{0,0}
	\end{scope}
	\begin{scope}[xshift=10cm]
	\rpp{1}{0}{0}
	\end{scope}
	\begin{scope}[xshift=15cm]
	\rpp{1}{0}{0}
	\end{scope}
\end{scope}
\draw[->] (7.5,-11.25) -- (7.5,-12.75) node[midway,right] {$\beta$};
\begin{scope}[yshift=-12cm]
	\fill[black!20] (0,-2) rectangle (1,-3);
	\node[scale=.7] at (0.5, -2.5) {$\ell_1$};
	\rpp{1,1}{0,0}{0,0}
	\begin{scope}[xshift=5cm]
	\rpp{1,1}{1,1}{0,0}
	\end{scope}
	\begin{scope}[xshift=10cm]
	\rpp{1}{0}{0}
	\end{scope}
	\begin{scope}[xshift=15cm]
	\rpp{1}{0}{0}
	\end{scope}
\end{scope}
\draw[->] (7.5,-15.25) -- (7.5,-16.75) node[midway,right] {$\delta$};
\begin{scope}[yshift=-16cm]
	\rpp{1}{0}{0}
	\begin{scope}[xshift=5cm]
	\rpp{1,1}{1,1}{0,0}
	\end{scope}
	\begin{scope}[xshift=10cm]
	\rpp{1}{0}{0}
	\end{scope}
	\begin{scope}[xshift=15cm]
	\rpp{1}{0}{0}
	\end{scope}
\end{scope}
\draw[->] (7.5,-19.25) -- (7.5,-20.75) node[midway,right] {$\delta$};
\begin{scope}[yshift=-20cm]
	\rpp{1}{0}{0}
	\begin{scope}[xshift=5cm]
	\rpp{1,1}{0,0}{0,0}
	\end{scope}
	\begin{scope}[xshift=10cm]
	\rpp{1}{0}{0}
	\end{scope}
	\begin{scope}[xshift=15cm]
	\rpp{1}{0}{0}
	\end{scope}
\end{scope}
\draw[->] (7.5,-23.25) -- (7.5,-24.75) node[midway,right] {$\beta$};
\begin{scope}[yshift=-24cm]
	\fill[black!20] (0,-2) rectangle (1,-3);
	\node[scale=.7] at (0.5, -2.5) {$\ell_1$};
	\fill[black!20] (5,-2) rectangle (6,-3);
	\node[scale=.7] at (5.5, -2.5) {$\ell_2$};
	\fill[black!20] (15,-1) rectangle (16,-2);
	\node[scale=.7] at (15.5, -1.5) {$\ell_3$};
	\fill[black!20] (10,-1) rectangle (11,-2);
	\node[scale=.7] at (10.5, -1.5) {$\ell_4$};
	\fill[black!20] (5,-1) rectangle (6,-2);
	\node[scale=.7] at (5.5, -1.5) {$\ell_5$};
	\fill[black!20] (0,-1) rectangle (1,-2);
	\node[scale=.7] at (0.5, -1.5) {$\ell_6$};
	\rpp{1,1}{0,0}{0,0}
	\begin{scope}[xshift=5cm]
	\rpp{1,1}{0,0}{0,0}
	\end{scope}
	\begin{scope}[xshift=10cm]
	\rpp{1}{0}{0}
	\end{scope}
	\begin{scope}[xshift=15cm]
	\rpp{1}{0}{0}
	\end{scope}
\end{scope}
\draw[->] (7.5,-27.25) -- (7.5,-28.75) node[midway,right] {$\delta$};
\begin{scope}[yshift=-29.5cm]
	\node at (0,0) {$\emptyset$};
	\begin{scope}[xshift=5cm]
	\node at (0,0) {$\emptyset$};
	\end{scope}
	\begin{scope}[xshift=10cm]
	\node at (0,0) {$\emptyset$};
	\end{scope}
	\begin{scope}[xshift=15cm]
	\node at (0,0) {$\emptyset$};
	\end{scope}
\end{scope}
\begin{scope}[xshift=22cm]
\node at (0,-1.5) {$\begin{array}{|@{}c@{}|} \hline - \\\hline - \\\hline + \\\hline + \\\hline \end{array} \otimes \begin{array}{|c|c|} \hline \lr{1} & \lr{1} \\\hline \lr{2} & \lr{\mone} \\\hline \end{array}$};
\draw[->] (0,-3.25) -- (0,-4.75) node[midway,right] {$\lhs$};
\node at (0,-6) {$\begin{array}{|c|c|} \hline \lr{1} & \lr{1} \\\hline \lr{2} & \lr{\mone} \\\hline \end{array}$};
\draw[->] (0,-7.25) -- (0,-8.75) node[midway,right] {$\ls$};
\node at (0,-10) {$\begin{array}{|c|} \hline \lr{1} \\\hline \lr{2} \\\hline \end{array} \otimes \begin{array}{|c|c|} \hline \lr{1} \\\hline  \lr{\mone} \\\hline \end{array}$};
\draw[->] (0,-11.25) -- (0,-12.75) node[midway,right] {$\lb$};
\node at (0,-14) {$\begin{array}{|c|} \hline \lr{2} \\\hline \end{array} \otimes \begin{array}{|c|} \hline \lr{1} \\\hline \end{array} \otimes \begin{array}{|c|c|} \hline \lr{1} \\\hline  \lr{\mone} \\\hline \end{array}$};
\draw[->] (0,-15.25) -- (0,-16.75) node[midway,right] {$\lh$};
\node at (0,-18) {$\begin{array}{|c|} \hline \lr{1} \\\hline \end{array} \otimes \begin{array}{|c|c|} \hline \lr{1} \\\hline  \lr{\mone} \\\hline \end{array}$};
\draw[->] (0,-19.25) -- (0,-20.75) node[midway,right] {$\lh$};
\node at (0,-22) {$\begin{array}{|c|} \hline \lr{1} \\\hline \lr{\mone} \\\hline \end{array}$};
\draw[->] (0,-23.25) -- (0,-24.75) node[midway,right] {$\lb$};
\node at (0,-26) {$\begin{array}{|c|} \hline \lr{\mone} \\\hline \end{array} \otimes \begin{array}{|c|} \hline \lr{1} \\\hline \end{array}$};
\draw[->] (0,-27.25) -- (0,-28.75) node[midway,right] {$\lh$};
\node at (0,-30) {$\begin{array}{|c|} \hline \lr{1} \\\hline \end{array}$};
\end{scope}
\end{tikzpicture}
\]
\end{example}

We need an additional operation $\lb^{(s)}$ on $\p(B)$ and $\beta^{(s)}$ on $\rc(L)$ for type $D^{(1)}_n$.

\begin{definition}
\label{defn:lbs}
For $1 < r \leqslant n-2$, the map $\lb^{(s)} \colon \p(B^{r,s} \ot B^\bullet)\rightarrow\p(B^{1,s}\ot B^{r-1,s}\ot B^\bullet)$
is defined by 
\[
\lb^{(s)}(b\ot b^\bullet)=b'\ot b''\ot b^\bullet.
\]
For $b=u_\la$,  the $I_0$-highest weight element of weight $\lambda = (s-N)\clfw_r + \clfw_{r-k_N} + \cdots + \clfw_{r-k_1}$
with $0 < k_N \leqslant \cdots \leqslant k_1$, $b'\ot b''\in B^{1,s}\ot B^{r-1,s}$ is given by
\[
\lb^{(s)}(u_{\lambda}) = {
\def\lr#1{\raisebox{-.3ex}{$#1$}}  
\begin{array}{|c|c|c|c|c|c|} \hline \lr{r} & \lr{\cdots} & \lr{r} & \lr{\overline{r - k_N + 1}} & \lr{\cdots} & 
\lr{\overline{r - k_1 + 1}} \\\hline \end{array} \otimes u_{\lambda_-},
}
\]
where
\[
\lambda_- = (s-N) \clfw_{r-1} + \clfw_{r-k_N+1} + \cdots + \clfw_{r-k_1+1}.
\]
For other elements $b$, the map $\lb^{(s)}$ is determined 
by assuming that $\lb^{(s)}$ commutes with $f_i$  for $i\in I_0$. 
\end{definition}

\begin{definition}
\label{defn:bs}
For a rigged configuration $(\nu,J) \in \rc\bigl(L(B^{r,s}\ot B^\bullet)\bigr)$,
the operation $\beta^{(s)}$ adds a singular string of length $s$ to $\nu^{(a)}$ for all $1 \leqslant a < r$.
\end{definition}

\begin{proposition}
\label{prop:betas_properties}
The operation $\beta^{(s)}$ preserves vacancy numbers and
\[
[\beta^{(s)}, \widetilde{\delta}] = [\beta^{(s)}, \deltast] = [\beta^{(s)}, \widetilde{\beta}] = [\beta^{(s)}, \widetilde{\gamma}] = 0.
\]
\end{proposition}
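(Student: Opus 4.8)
The plan is to establish the two parts separately. For the invariance of the vacancy numbers I would compute directly. In type $D_n^{(1)}$ we have $\gamma_a=1$, and $\beta^{(s)}$ carries $\rc\bigl(L(B^{r,s}\otimes B^\bullet)\bigr)$ to $\rc\bigl(L(B^{1,s}\otimes B^{r-1,s}\otimes B^\bullet)\bigr)$, so that the size-$s$ multiplicity $L_s^{(a)}$ changes by $\delta_{a,1}+\delta_{a,r-1}-\delta_{a,r}$, while a part of size $s$ is added to $\nu^{(a)}$ for $1\leqslant a<r$ (so $m_s^{(a)}$ increases by $1$ there). The net change in each vacancy number is
\[
\Delta p_i^{(a)} = \min(i,s)\Bigl(\delta_{a,1}+\delta_{a,r-1}-\delta_{a,r}-\sum_{b=1}^{r-1}A_{ab}\Bigr),
\]
so it suffices to verify the Cartan-matrix identity $\sum_{b=1}^{r-1}A_{ab}=\delta_{a,1}+\delta_{a,r-1}-\delta_{a,r}$ for all $a\in I_0$. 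Since $1<r\leqslant n-2$, the indices $1,\dots,r-1$ lie on the linear chain of the $D_n$ diagram away from the fork, so this reduces to the type $A$ computation and is dispatched in the cases $a=1$, $1<a<r-1$, $a=r-1$, $a=r$, and $a\notin\{1,\dots,r\}$. This gives $\Delta p_i^{(a)}=0$; in particular the adjoined singular strings have admissible riggings and every pre-existing string retains its rigging and its singular/non-singular status.

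For the commutation relations, the key observation is that these are left--right independence statements: $\beta^{(s)}$ is the rigged-configuration counterpart of $\lb^{(s)}$ acting on the left factor $B^{r,s}$, whereas $\widetilde{\delta},\deltast,\widetilde{\beta},\widetilde{\gamma}$ are the maps $\theta X\theta$ with $X\in\{\delta,\deltas,\beta,\gamma\}$ attached to a right factor. First I would conjugate by $\theta$: as $\theta$ is an involution, $[\beta^{(s)},\widetilde{X}]=\theta\,[\widetilde{\beta}^{(s)},X]\,\theta$ with $\widetilde{\beta}^{(s)}:=\theta\beta^{(s)}\theta$, so it is equivalent to prove $[\widetilde{\beta}^{(s)},X]=0$ for each such $X$. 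Using the vacancy invariance just shown, $\widetilde{\beta}^{(s)}$ has a clean description: it fixes every pre-existing string and adjoins to each of $\nu^{(1)},\dots,\nu^{(r-1)}$ a length-$s$ string of rigging exactly $0$; crucially this rigging is $0$ absolutely, not merely relative to a vacancy number. The cases $X=\gamma$ and $X=\beta$ are then immediate: $\gamma$ is the identity on $(\nu,J)$ and only relabels $L$, so it commutes with the adjunction of rigging-$0$ strings; and $\beta$ adjoins length-$1$ singular strings, which commute with the length-$s$ cosingular adjunctions because both maps preserve vacancy numbers (hence preserve one another's riggings) and the adjunction of strings to partitions is order independent.

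The crux is $[\widetilde{\beta}^{(s)},\delta]=[\widetilde{\beta}^{(s)},\deltas]=0$, where $\delta$ (resp.\ $\deltas$) runs its greedy removal of boxes from singular strings, and this is where I expect the main work. Since $\widetilde{\beta}^{(s)}$ preserves all vacancy numbers, the singular strings scanned by $\delta$ are unchanged; and since the adjoined strings carry the absolute rigging $0$, they are never altered by $\delta$ (which fixes the rigging of any unselected string) and are eligible for selection only when the vacancy $p_s^{(a)}$ vanishes. When $p_s^{(a)}>0$ for all relevant $a$, the adjoined strings are invisible to $\delta$: it makes exactly the same selections and records the same element of $B(\clfw_1)$ in either order, and the trailing rigging-$0$ strings match.

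The delicate situation is the coincidence $p_s^{(a)}=0$, in which an adjoined string becomes singular and can be picked up by $\delta$. When $\delta$ would select a length-$s$ string of $\nu^{(a)}$ in any case, the outcomes still agree by the explicit identity that removing a box from one of two identical length-$s$ singular strings and then adjoining a length-$s$ rigging-$0$ string produces the same multiset $\{s,\,s-1\}$ as in the opposite order. The single point that genuinely requires care is ruling out that an adjoined string ever becomes the shortest eligible singular string of length $\geqslant\ell_{j-1}$ and thereby diverts the selection sequence $(\ell_j)$; I would settle this by exploiting that the adjoined strings all share the common length $s$ and occupy the consecutive partitions $\nu^{(1)},\dots,\nu^{(r-1)}$ governed by the left factor, so that the argument reduces to, and propagates from, the analysis already used for the width-$1$ commutativity $[\widetilde{\beta},\delta]=0$ in the type $D_n^{(1)}$ bijection together with the crystal constraint $f_ab_{j-1}\neq0$. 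The relation for $\deltast$ then follows by the same analysis with the spin removal $\deltas$ in place of $\delta$.
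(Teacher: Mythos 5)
Your overall strategy matches the paper's: the direct vacancy-number computation, the immediate consequences for $\widetilde{\beta}$ and $\widetilde{\gamma}$, and the observation that the adjoined length-$s$ strings can interfere with the box-removal algorithm only when $p_s^{(a)} = 0$. (Your $\theta$-conjugation, replacing $[\beta^{(s)},\widetilde{\delta}]$ by $[\widetilde{\beta}^{(s)},\delta]$ with cosingular adjunctions, is just the mirror image of the paper's formulation, which keeps $\beta^{(s)}$ adding singular strings and lets $\widetilde{\delta}$ select cosingular ones; the content is identical.) The gap sits exactly at the point you yourself flag as ``the single point that genuinely requires care'': you do not rule out that an adjoined string diverts the selection sequence, you only assert that this ``reduces to, and propagates from'' the width-$1$ analysis. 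That reduction is not automatic --- the width-$1$ argument concerns length-$1$ strings entering at the very start of the scan, whereas here the length-$s$ strings can become eligible at an arbitrary stage $\ell_{j-1} \leqslant s$ --- and this is where the paper does its real work. Concretely: if the two selection sequences first differ at $\nu^{(b)}$, one shows $m_s^{(b)} = p_s^{(b)} = 0$, and then the convexity identity $-p_{s-1}^{(b)} + 2p_s^{(b)} - p_{s+1}^{(b)} = -\sum_{c} A_{bc} m_s^{(c)} + L_s^{(b)}$ together with the nonnegativity of all vacancy numbers forces $p_{s-1}^{(b)} = p_{s+1}^{(b)} = m_s^{(b\pm 1)} = L_s^{(b)} = 0$; iterating this downward in the part length and in $b$ eventually contradicts either $L_1^{(1)} \geqslant 1$ or the fact that a string was selected in $\nu^{(b-1)}$. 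Without some version of this computation the commutativity with $\widetilde{\delta}$ is unproven.

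A second, smaller gap concerns $\deltast$: you propose to ``repeat the same analysis with $\deltas$ in place of $\delta$,'' but $\deltas$ is a genuinely different algorithm (it walks through $B(\clfw_r)$ for $r = n-1, n$ rather than $B(\clfw_1)$), so the singular-string bookkeeping would have to be redone from scratch and is not obviously the same. The paper sidesteps this by factoring $\deltas = \emb_{2\times}^{-1} \circ (\delta\circ\beta)^{n-2}\circ\delta\circ\overline{\beta}\circ\delta\circ\beta_{[r]}\circ\emb_{2\times}$ and checking that $\beta^{(s)}$ commutes with each factor (using $\emb_{2\times}\circ\beta^{(s)} = \beta^{(2s)}\circ\emb_{2\times}$ and $[\emb_{2\times},\theta]=0$), so that $[\beta^{(s)},\deltast]=0$ follows formally from the already-established relations. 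You would need either this factorization or a separate direct argument for the spin case.
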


\begin{proof}
It is straightforward to see that $\beta^{(s)}$ preserves vacancy numbers.
It immediately follows that
\[
[\beta^{(s)}, \widetilde{\beta}] = [\beta^{(s)}, \widetilde{\gamma}] = 0.
\]
We show $[\beta^{(s)}, \widetilde{\delta}] = 0$ by a similar argument to~\cite[Prop.~3.16]{OSSS16}, which is 
based on~\cite[Lemma~5.4]{S:2005}.
Let $(\nu, J) \in \rc\bigl(L(B)\bigr)$, where $B = B^{r,s} \otimes B^{\bullet} \otimes B^{1,1}$.
Let $\ell^{(a)}$ and $\overline{\ell}^{(a)}$ (resp.~$k^{(a)}$ and $\overline{k}^{(a)}$) be the cosingular strings, that is,
rows with rigging $0$, selected by $\widetilde{\delta}$ in $(\nu, J)$ (resp.~$\beta^{(s)}(\nu, J)$).
Since $\beta^{(s)}$ preserves vacancy numbers and riggings, we must have $k^{(a)} \leqslant \ell^{(a)}$ and 
$\overline{k}^{(a)} \leqslant \overline{\ell}^{(a)}$.
If $k^{(a)} = \ell^{(a)}$ and $\overline{k}^{(a)} = \overline{\ell}^{(a)}$ for all $a \in I_0$, the claim follows.
We show $k^{(a)} = \ell^{(a)}$ as the case for $\overline{k}^{(a)} = \overline{\ell}^{(a)}$ is similar.

Assume $b$ is minimal such that $k^{(b)} < \ell^{(b)}$, where $1 \leqslant b < r$.
Therefore, we must have $k^{(a)} = \ell^{(a)} \leqslant s$ for all $1 \leqslant a < b$, $k^{(b)} = s < \ell^{(b)}$, 
$m_s^{(b)} = 0$ and $p_s^{(b)} = 0$. Note that
\begin{equation}
\label{eq:convexity}
-p_{i-1}^{(a)} + 2 p_i^{(a)} - p_{i+1}^{(a)} = -\sum_{b \in I_0} A_{ab} m_i^{(b)} + L_i^{(a)}.
\end{equation}
Since $m_s^{(b)} = 0$ and $p_s^{(b)} = 0$, by~\eqref{eq:convexity} at $i = s$, we have
\begin{equation}
\label{eq:convexity_is}
-p_{s-1}^{(b)} - p_{s+1}^{(b)} = m_s^{(b-1)} + m_s^{(b+1)} + L_s^{(b)}.
\end{equation}
Since $p_i^{(a)} \geqslant 0$, this implies all quantities in~\eqref{eq:convexity_is} must be zero.
For $b > 1$, we must have $k^{(b-1)} = \ell^{(b-1)} \leqslant s-1$ as $\ell^{(b-1)} = s$ contradicts $m_s^{(b-1)} = 0$.
Recall that $p_{s-1}^{(b)} = 0$, and so any row of length $s-1$ must be cosingular since all riggings are nonnegative.
Therefore, we must have $m_{s-1}^{(b)} = 0$ as otherwise $\ell^{(b)} = s-1$, but this implies that $m_{s-1}^{(b-1)} = 0$ since
\[
-p_{s-2}^{(b)} = m_{s-1}^{(b-1)} + m_{s-1}^{(b+1)} + L_{s-1}^{(b)}
\]
from~\eqref{eq:convexity} at $i = s-1$.
Hence, we must have $k^{(b-1)} = \ell^{(b-1)} \leqslant s-2$.
Therefore, by iterating the previous argument, we can show that $m_i^{(b-1)} = 0$ for all $i \leqslant s$, which is 
a contradiction.
For $b = 1$, similar to the previous case, we must have $m_i^{(1)} = 0$ and $p_i^{(1)} = 0$ for all $1 \leqslant i \leqslant s$.
Moreover, we must have $L_1^{(1)} = 0$, but this is a contradiction.
Therefore, we have $[\beta^{(s)}, \widetilde{\delta}] = 0$.

Let $\emb_{2\times}$ be the doubling map:
the virtualization (or similarity) map given by $\virtual{\g} = \g$ and $\gamma_a = 2$ for all $a \in I$~\cite{K96}.
Recall from~\cite{S:2005, Scrimshaw17}
\[
\deltas = \emb_{2\times}^{-1} \circ (\delta \circ \beta)^{n-2} \circ \delta \circ \overline{\beta} 
\circ \delta \circ \beta_{[r]} \circ \emb_{2\times},
\]
where $\beta_{[r]}$ (resp.~$\overline{\beta}$) adds a singular row of length 1 to $\nu^{(a)}$ for all $a \in I_0 \setminus \{r\}$ 
(resp.~$a \leqslant n - 2$).
Define $\widetilde{\overline{\beta}} := \theta \circ \overline{\beta} \circ \theta$ and 
$\widetilde{\beta}_{[r]} := \theta \circ \beta_{[r]} \circ \theta$.
It is clear that $\emb_{2\times} \circ \beta^{(s)} = \beta^{(2s)} \circ \emb_{2\times}$ and
\[
[\beta^{(s)}, \widetilde{\overline{\beta}}] = [\beta^{(s)}, \widetilde{\beta}_{[r]}] = [\emb_{2\times}, \theta] = 0
\]
from~\cite[Lemma~3.15]{OSSS16}.
Since
\[
\deltast = \emb_{2\times}^{-1} \circ (\widetilde{\delta} \circ \widetilde{\beta})^{n-2} \circ \widetilde{\delta} 
\circ \widetilde{\overline{\beta}} \circ \widetilde{\delta} \circ \widetilde{\beta}_{[r]} \circ \emb_{2\times},
\]
we have $[\beta^{(s)}, \deltast] = 0$.
\end{proof}

\begin{proposition}
\label{prop:general_lb_beta}
For type $D^{(1)}_n$, the operation $\lb^{(s)}$ corresponds to $\beta^{(s)}$ under $\Phi$.
\end{proposition}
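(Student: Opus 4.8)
The plan is to prove the commuting relation $\Phi \circ \lb^{(s)} = \beta^{(s)} \circ \Phi$ on $\p(B^{r,s} \ot B^\bullet)$ by induction on the number of tensor factors of $B^\bullet$, peeling factors off from the \emph{right} and reducing to the single-factor base case $B^\bullet = \emptyset$. The engine of the induction is Proposition~\ref{prop:betas_properties}: because $\beta^{(s)}$ preserves every vacancy number and only touches $\nu^{(a)}$ for $a<r$, it commutes with the right-hand maps $\widetilde{\delta}$, $\deltast$, $\widetilde{\beta}$, $\widetilde{\gamma}$, which are exactly the rigged-configuration operations that $\Phi$ intertwines with the right-split operations $\rh$, $\rh_{\mathrm{sp}}$, $\rb$, $\rs$ on paths (by the type $D_n^{(1)}$ theory of~\cite{OSSS16, Sak:2013}). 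On the path side, Definition~\ref{defn:lbs} presents $\lb^{(s)}$ as $\psi \ot \id_{B^\bullet}$, where $\psi \colon B^{r,s} \to B^{1,s} \ot B^{r-1,s}$ is the classical crystal embedding sending $u_\la$ to the displayed two-factor element; in particular $\lb^{(s)}$ commutes with every $e_i, f_i$ ($i \in I_0$), hence with $\lusz$ and $\mathbin{\uparrow}$, and therefore with $\hwstar = \mathbin{\uparrow} \circ \lusz$.

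First I would record that $\lb^{(s)}$ commutes with each right-split map on paths. Writing $\rh = \hwstar \circ \lh \circ \hwstar$ (and similarly for $\rh_{\mathrm{sp}}, \rb, \rs$) and using $[\lb^{(s)}, \hwstar] = 0$, it is enough to observe that after the inner $\hwstar$ reverses the tensor order the factor $B^{r,s}$ on which $\psi$ acts becomes rightmost, whereas the left-split map acts on the leftmost factor; acting on opposite ends, the two commute. Now suppose the relation holds whenever $B^\bullet$ has fewer than $m$ factors, and write $B^\bullet = B^{\bullet\prime} \ot C$ with $C = B^{r_1,s_1}$ the rightmost factor. Let $\rho$ (resp.\ $\rho_{\mathrm{out}}$) be the right-split composite that empties $C$ on the source (resp.\ target) path set, and let $\widetilde{\Theta}$ (resp.\ $\widetilde{\Theta}_{\mathrm{out}}$) be the corresponding composite of $\widetilde{\delta}, \deltast, \widetilde{\beta}, \widetilde{\gamma}$, so that $\Phi \circ \rho = \widetilde{\Theta} \circ \Phi$. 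Setting $A_1 = \Phi(\lb^{(s)}(x))$ and $A_2 = \beta^{(s)}(\Phi(x))$ for $x \in \p(B^{r,s} \ot B^\bullet)$, the computation
\[
\widetilde{\Theta}_{\mathrm{out}} \circ \Phi \circ \lb^{(s)}
= \Phi \circ \rho_{\mathrm{out}} \circ \lb^{(s)}
= \Phi \circ \lb^{(s)} \circ \rho
= \beta^{(s)} \circ \Phi \circ \rho
= \beta^{(s)} \circ \widetilde{\Theta} \circ \Phi
= \widetilde{\Theta}_{\mathrm{out}} \circ \beta^{(s)} \circ \Phi
\]
shows $\widetilde{\Theta}_{\mathrm{out}}(A_1) = \widetilde{\Theta}_{\mathrm{out}}(A_2)$, where the middle equality is the inductive hypothesis applied to $\rho(x)$ and the last is Proposition~\ref{prop:betas_properties}. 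The map $\widetilde{\Theta}_{\mathrm{out}}$ only deletes the strings contributed by $C$; since $\beta^{(s)}$ commutes with it and fixes all vacancy numbers, while $\lb^{(s)}$ leaves $C$ untouched, the data recorded while removing $C$ (the selected singular and cosingular strings) also agree for $A_1$ and $A_2$, and as a single removal step is reversible from this data we conclude $A_1 = A_2$. This completes the inductive step.

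It remains to treat the base case $B^\bullet = \emptyset$, namely $\Phi(\lb^{(s)}(u_\la)) = \beta^{(s)}(\Phi(u_\la))$ for each maximal element $u_\la \in \p(B^{r,s})$ with $\la = (s-N)\clfw_r + \clfw_{r-k_N} + \cdots + \clfw_{r-k_1}$. Here one feeds the explicit two-factor path $\lb^{(s)}(u_\la)$ of Definition~\ref{defn:lbs}, whose left factor is the length-$s$ row with entries $r, \dotsc, r, \overline{r-k_N+1}, \dotsc, \overline{r-k_1+1}$, into the recursive left-splitting algorithm for $\Phi$ and checks that processing this row against $\Phi(u_\la)$ inserts exactly one singular string of length $s$ into each of $\nu^{(1)}, \dotsc, \nu^{(r-1)}$, i.e.\ applies $\beta^{(s)}$. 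The top component $N=0$ is immediate: there $u_\la = u_{s\clfw_r}$ with $\Phi(u_\la) = \emptyset$, and $\lb^{(s)}(u_{s\clfw_r}) = [\,r \cdots r\,] \ot u_{s\clfw_{r-1}}$ maps under $\Phi$ to the configuration carrying one length-$s$ singular string in each $\nu^{(a)}$ ($1 \le a < r$), which is precisely $\beta^{(s)}(\emptyset)$. The general $\la$ follows by the same direct computation, the barred entries $\overline{r-k_j+1}$ dictating which already-present strings are selected; this bookkeeping—carried out exactly as in the single-factor analyses of~\cite{OSS:2013, OSSS16}—is the main obstacle, and I note that the naive attempt to induct on $s$ by column-splitting $B^{r,s}$ fails, since adding a length-$1$ and a length-$(s-1)$ string is not the same as adding a single length-$s$ string.
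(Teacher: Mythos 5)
Your proposal follows essentially the same route as the paper: reduce to a single factor $B^{r,s}$ and its $I_0$-highest weight elements via Proposition~\ref{prop:betas_properties} (the commutation of $\beta^{(s)}$ with the right-hand operations), then verify the base case against the explicit formula for $\Phi(u_\lambda)$ from~\cite{OSS:2013}. The only differences are cosmetic --- the paper runs the base-case check in the opposite direction, applying $\gamma$ and then $\delta$ repeatedly to $\beta^{(s)}\bigl(\Phi(u_\lambda)\bigr)$ and reading off the row $r,\dotsc,r,\overline{r-k_N+1},\dotsc,\overline{r-k_1+1}$, and it actually carries out the general-$\lambda$ bookkeeping that you only sketch.
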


\begin{proof}
By Proposition~\ref{prop:betas_properties}, it suffices to consider the $I_0$-highest weight elements of $B = B^{r,s}$.

Consider an $I_0$-highest weight element $u_{\lambda} \in B^{r,s}$.
From~\cite[Prop.~3.3]{OSS:2013}, we have $\Phi(u_{\lambda})$ given by
\[
\nu^{(a)} = \begin{cases}
\overline{\lambda}^{[r-a]} & \text{if } 1 \leqslant a < r, \\
\overline{\lambda} & \text{if } r \leqslant a \leqslant n-2, \\
\overline{\lambda}^o & \text{if } a =n-1, n,
\end{cases}
\]
and all riggings are $0$, where $\overline{\lambda}$ is the complement partition of $\lambda$ in a $r \times s$ box, 
$\mu^{[k]}$ is the partition obtained by removing the $k$ longest rows of $\mu$ and $\mu^o$ is the partition 
consisting of only the odd rows of $\mu$. Suppose $\overline{\lambda}^t = (k_1, \dotsc, k_N)$.
Note that all strings in $\beta^{(s)}(\nu, J)$ are singular by the definition of $\beta^{(s)}$ and 
Proposition~\ref{prop:betas_properties}. Furthermore, the application of $\gamma$ makes all strings of length $i < s$ 
nonsingular in $\nu^{(1)}$, but keeps all other strings singular. Hence $\delta$ starts by selecting the singular string of 
length $s$ added by $\beta^{(s)}$ and continues selecting them until it reaches $\nu^{(r)}$, where we have two main cases.

We first consider the case $N < s$. In this case $\delta$ terminates and returns $r$, and similarly for each subsequent 
application of $\delta$ until the leftmost factor is $B^{1,N}$.
In this case, we continue selecting rows until we return again to the $r$-th partition, 
at which point there is a second singular string of length $N-1$ we can select since $k_N > 0$.
We continue selecting singular strings until we reach the $(r-k_N)$-th partition, 
at which point there are no other singular strings and we return $\overline{r-k_N+1}$.
The next application of $\delta$ is similar to the previous application except it terminates at the $(r-k_{N-1}+1)$-th partition
and returns $\overline{r-k_{N-1}+1}$. By iterating this argument, the claim follows.

Since the remaining factor is multiplicity free and determined by the weight, the claim follows.
\end{proof}

Define the involution $\varsigma \colon \rc(L) \to \rc(L^{\sigma})$ by $\varsigma\left((\nu, J)^{(a)}\right) = (\nu,J)^{(\sigma(a))}$.

\begin{proposition}
\label{prop:involution}
For types $A_n^{(1)}$ and $D_n^{(1)}$, the diagram
\[
\xymatrix{\p(B) \ar[r]^{\Phi} \ar[d]_{\sigma} & \rc(L) \ar[d]^{\varsigma} \\ \p(B^{\sigma}) \ar[r]_{\Phi} & \rc(L^{\sigma})}
\]
commutes.
\end{proposition}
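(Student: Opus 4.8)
The plan is to prove the equality $\varsigma\circ\Phi=\Phi\circ\sigma$ on paths by induction on the total number of boxes of $B$, exploiting the recursive definition of $\Phi$ through the basic maps. Observe first that $\sigma$ does preserve $I_0$-highest weight elements in both types (since $\sigma$ permutes $I_0$ and fixes $0$), so the vertical maps in the square make sense. The reduction I would carry out is to show that $\sigma$ on the path side is intertwined by $\varsigma$ on the rigged-configuration side through each basic map $\lh,\lhs,\lb,\ls$ and its counterpart $\delta,\deltas,\beta,\gamma$. The structural maps offer no resistance: $\beta$ and $\gamma$ only add length-$1$ singular strings or leave the underlying data unchanged while adjusting vacancy numbers, and since $\varsigma$ merely permutes the partitions $\nu^{(a)}\mapsto\nu^{(\sigma(a))}$ and respects vacancy numbers, one has $\beta\circ\varsigma=\varsigma\circ\beta$ and $\gamma\circ\varsigma=\varsigma\circ\gamma$ (with the index $r$ transported by $\sigma$); likewise $\lb,\ls$ act structurally and commute with the entrywise action of $\sigma$. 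Thus the entire statement collapses to the two letter-reading maps $\delta$ and $\deltas$. Finally, on a single KR factor $B^{r,s}$ the classical decomposition is multiplicity free, so there $\Phi$ is pinned down by weights and the identity can be checked directly from the explicit image of $u_\lambda$ in~\cite[Prop.~3.3]{OSS:2013}; this disposes of the atomic cases, and the induction assembles them.

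For type $D_n^{(1)}$ the automorphism $\sigma$ fixes every node except the fork, exchanging $n-1\leftrightarrow n$, and $\varsigma$ correspondingly exchanges only $\nu^{(n-1)}\leftrightarrow\nu^{(n)}$. Because $\sigma(r)=r$ for $r\le n-2$, the leftmost tensor factor keeps its type under $\sigma$, so $\lh$, together with $\lb$ and $\ls$ for $r<n-1$, commutes with $\sigma$ while the matching $\delta,\beta,\gamma$ commute with $\varsigma$ as above. The whole content is therefore the single intertwining lemma that $\delta$ commutes with $\varsigma$ up to relabeling the returned letter by $n\leftrightarrow\mn$, together with the statement that $\deltas$ for $r=n-1$ is carried to $\deltas$ for $r=n$ by $\varsigma$. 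I would prove this by a direct analysis of the selection algorithm at the fork: after the procedure leaves $\nu^{(n-2)}$ it branches into $\nu^{(n-1)}$ and/or $\nu^{(n)}$, and the rules of~\cite{OSS:2003a} show that exchanging these two partitions exactly exchanges the two branch outcomes, hence swaps the recorded letters $n\leftrightarrow\mn$; the behavior in $\nu^{(1)},\dots,\nu^{(n-2)}$ is untouched by the swap. For $\deltas$ one argues identically, noting that $B(\clfw_{n-1})$ and $B(\clfw_n)$ are interchanged by $\sigma$ and their algorithms differ only in the initial fork partition.

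For type $A_n^{(1)}$ the cleanest route is to use the factorization $\sigma=\vee\circ\lusz$ recorded in the text and to match the two involutions with known symmetries of the rigged-configuration side. The Lusztig involution is governed by the complement-rigging involution $\theta$: this is precisely the information packaged in the right-analog maps $\widetilde{\delta}=\theta\circ\delta\circ\theta$ and $\rh=\hwstar\circ\lh\circ\hwstar$ with $\hwstar=\mathbin{\uparrow}\circ\lusz$, which say that conjugating $\delta$ by $\theta$ realizes conjugating $\lh$ by $\lusz$. The contragredient dual $\vee$ carries $B^{r,s}$ to $B^{\sigma(r),s}$ and hence $L$ to $L^\sigma$, and on the rigged-configuration side it is implemented by the label reversal $\varsigma$ (up to $\theta$), so that composing the two identifications yields $\Phi\circ\sigma=\varsigma\circ\Phi$. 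Alternatively, one may run the same induction as in type $D$, where now the intertwining lemma for $\delta$ asserts that reading the smallest admissible letter is dual, under $\theta$ and the reversal $a\mapsto n+1-a$, to reading from the opposite end of the alphabet $\{1,\dots,n+1\}$.

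The main obstacle is the type-$A_n^{(1)}$ case, and specifically the compatibility of $\delta$ with the global reversal $\varsigma$. The difficulty is structural rather than local: since $\sigma$ sends the leftmost factor $B^{1,1}$ to $B^{n,1}$, the recursion trees of $\Phi$ on $B$ and on $B^{\sigma}$ have different shapes—a single application of $\lh$ on one side corresponds to $\lb$ followed by $\lh$ on the other—so a box-by-box induction does not align the two sides directly, and one must instead verify the duality of $\delta$ under $\theta$ together with the label reversal, a bookkeeping in which singular and cosingular strings must be tracked carefully. The $\vee/\lusz$ factorization is attractive precisely because it trades this mismatch for the two already-understood symmetries $\theta$ and $\varsigma$. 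In type $D_n^{(1)}$ the analogous obstacle is confined to the two fork partitions and is correspondingly milder. I also note a slicker alternative when $\Phi$ is available as a full $U_q'(\g)$-crystal isomorphism (as in~\cite{SW10} for type $A_n^{(1)}$): then $\Phi^{-1}\circ\varsigma\circ\Phi\circ\sigma^{-1}$ is a genuine $U_q'(\g)$-automorphism of the connected crystal $B^{\sigma}$ fixing its maximal element (whose image is the empty rigged configuration), hence the identity, which gives the claim at once.
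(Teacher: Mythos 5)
Your type $D_n^{(1)}$ argument is essentially the paper's own proof: since $\sigma$ fixes every classical node except $n-1\leftrightarrow n$, the maps $\lh,\lb,\ls$ and $\beta,\gamma$ commute with $\sigma$ and $\varsigma$ on the nose (as $\beta,\gamma$ never touch $\nu^{(n-1)},\nu^{(n)}$), and the whole content is the behavior of $\delta$ and $\deltas$ at the fork, which is exactly the case analysis the paper carries out (same rows selected when the return letter is not $n$ or $\mn$; the two one-sided branch outcomes swapped otherwise). That half is fine.

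The type $A_n^{(1)}$ half has a genuine gap. First, your opening claim that $\beta\circ\varsigma=\varsigma\circ\beta$ ``with the index transported by $\sigma$'' is false in type $A$: $\beta$ for $B^{r,1}$ adds singular strings to $\nu^{(1)},\dotsc,\nu^{(r-1)}$, whereas $\varsigma$ conjugates this to adding strings to $\nu^{(n)},\dotsc,\nu^{(n+2-r)}$, which is not $\beta$ for $B^{n+1-r,1}$ — you correctly diagnose this later (the recursion trees do not align), but it means the inductive scaffolding of your first paragraph does not survive into type $A$. Your replacement argument via $\sigma=\vee\circ\lusz$ is circular as written: granting the known correspondence $\Phi\circ\hwstar=\theta\circ\Phi$, the asserted step ``$\vee$ is implemented on rigged configurations by $\varsigma$ up to $\theta$'' is logically equivalent to the proposition itself, and you give no independent proof of it. The paper avoids this entirely by citing \cite[Thm.~5.7]{OSS03III} for type $A_n^{(1)}$; if you do not invoke that result you must actually prove the $\delta$--$\varsigma$ duality, which is the hard bookkeeping you flag but do not perform. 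Your ``slicker alternative'' via \cite{SW10} is a genuinely different and potentially clean route (a $U_q'(\g)$-automorphism of a connected crystal fixing the maximal element is the identity, and $\Phi$ does send the maximal element to the empty rigged configuration), but it silently assumes that $\varsigma$ is a $\sigma$-twisted \emph{affine} crystal automorphism of $\RC(L)$ — a statement about the affine crystal operators on rigged configurations that is itself nontrivial and unproved here. As it stands, the type $A$ case is not established.
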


\begin{proof}
For type $A_n^{(1)}$, this was shown in~\cite[Thm.~5.7]{OSS03III}.
Thus, let $\g$ be of type $D_n^{(1)}$.
Recall that $\sigma$ interchanges $n \leftrightarrow \mn$.
It is straightforward to see that $\sigma$ intertwines with $\lh$, $\lb$ and $\ls$.
Because $\beta$ and $\gamma$ affect neither $\nu^{(n-1)}$ nor $\nu^{(n)}$
(since $\gamma$ is the identity map and $\beta$ only applies to $\nu^{(a)}$ for $a<n-1$), 
the map $\varsigma$ intertwines with $\beta$ and $\gamma$. It remains to show that $\varsigma$
intertwines with $\delta$ and $\deltas$.

We first consider the case when $\delta$ does not terminate with $n$ or $\mn$.
In this case, the choices of $\nu^{(n-1)}$ and $\nu^{(n)}$ are done independently, and so the 
same rows are selected in $\varsigma(\nu, J) := (\nu_{\varsigma}, J_{\varsigma})$.
If $\delta$ terminates at $n$ or $\mn$, then the algorithm selected a row in $\nu^{(n-1)}$ or $\nu^{(n)}$, 
respectively, but not both. Therefore, we select the same row in $\nu_{\varsigma}^{(n)}$ or 
$\nu_{\varsigma}^{(n-1)}$, respectively, and terminate with $\mn$ or $n$, respectively.
Hence, the map $\varsigma$ intertwines with $\delta$.

For $\deltas$, we note that $\sigma \colon B^{n-1,1} \leftrightarrow B^{n,1}$.
From the description of $\deltas$, every $f_a$ arrow that would normally be applied
when doing $\deltas$, is replaced by an $f_{\sigma(a)}$ arrow.
However, we have $(\nu_{\varsigma}, J_{\varsigma})^{(a)} = (\nu,J)^{(\sigma(a))}$ and the claim follows.
\end{proof}

\section{The bijection}
\label{sec:bijection}

In this section, we show that there exists a bijection $\Phi$ from paths to rigged configurations for any nonexceptional type $\g$.
We construct $\Phi$ as a generalization of the map $\Phi \colon \p(B) \to \rc(L)$ of types $A_n^{(1)}$ and $D_n^{(1)}$
of Section~\ref{subsection.bij AD} to nonexceptional types using generalizations of the maps~\eqref{equation.rc maps} 
(the maps for paths remain the same).
In order to do this, we will \emph{define} $\Phi$ by using the map $\Phi^X$ in ambient type $X$ and lifted versions of 
the maps on paths and rigged configurations.
Afterwards, we will show that $\Phi$ can be computed by using generalizations of~\eqref{equation.rc maps}.

We begin by defining the generalizations of the rigged configuration operations~\eqref{equation.rc maps}.
The map $\delta$ in~\eqref{equation.rc maps} is defined in~\cite{OSS:2003a} and $\beta$ and $\gamma$ are the 
same maps as in~\eqref{equation.rc maps}. For type $D_{n+1}^{(2)}$, we define
\begin{subequations}
\label{eq:spins_Dtwisted}
\begin{align}
\deltas & := \emb_{2\times}^{-1} \circ \delta \circ (\delta \circ \beta)^{n-1} \circ \emb_{2\times},
\\ \lhs & := \emb_{2\times}^{-1} \circ \lh \circ (\lh \circ \lb)^{n-1} \circ \emb_{2\times}.
\end{align}
\end{subequations}
For type $B_n^{(1)}$, we define $\deltas$ and $\lhs$ by~\eqref{eq:spins_Dtwisted} except $\emb_{2\times}$ is the 
virtualization map with $\virtual{\g}$ of type $A_{2n-1}^{(2)}$ and $\gamma_a = 1 + \delta_{an}$ for all $a \in I$.
The KR tableaux used for $\p(B)$ are defined in~\cite{schilling.scrimshaw.2015} and precisely characterize the image 
of $\Phi$ under this bijection.

We also need the fact
\[
\emb\bigl(\p(B)\bigr) = \p\bigl(\emb(B)\bigr),
\]
where $B$ is a tensor product of KR crystals, which is a consequence of Proposition~\ref{prop:virtual_tensor_category} 
and $\emb\bigl(\p(B^{r,s})\bigr) = \p\bigl(\emb(B^{r,s})\bigr)$.

In this section, we continue with the notation of Section~\ref{sec:virtual_crystals}, where $f^X$ denotes a map $f$ in 
ambient type $X$.

\subsection{Lifted operations on paths}
\label{subsection.lifted paths}

We define the operations $\virtual{\lh}$, $\lhsv$, $\virtual{\lb}$ and $\virtual{\ls}$ on paths for the ambient types.

\subsubsection{For ambient type $A$}

Let $B^{\bullet}$ be a tensor product of KR crystals of type $A_{2n-1}^{(1)}$.
We define $\virtual{\lh}$ as a composition of the following maps
\begin{equation}
\begin{split}
B_A^{1,1} \otimes B_A^{2n-1,1} \otimes B^{\bullet} & \xmap{\lh^A} B_A^{2n-1,1} \otimes B^{\bullet}
\xmap{\sigma} B_A^{1,1} \otimes  \sigma(B^{\bullet}) \\
& \xmap{\lh^{A}} \sigma(B^{\bullet}) \xmap{\sigma} B^{\bullet}.
\end{split}
\end{equation}
For $D^{(2)}_{n+1}$, the operation $\lhsv$ is defined by 
\begin{equation}
\lhsv := \lh^A \circ (\lh^A \circ \lb^A)^{n-1}.
\end{equation}

Next, we define $\virtual{\lb}$ for $1 < r < n$ in types $C_n^{(1)}$ and $D_{n+1}^{(2)}$
and $1 < r \leqslant n$ in types $A_{2n}^{(2)}$ and $A_{2n}^{(2)\dagger}$ by
\begin{subequations}
\label{eq:lbv_A}
\begin{equation}
\begin{split}
B_A^{r,1} \otimes B_A^{2n-r,1} \otimes B^{\bullet} & \xmap{\lb^A} B_A^{1,1} \otimes B_A^{r-1,1} \otimes B_A^{2n-r,1} \otimes B^{\bullet}
\\ & \xmap{R^A} B_A^{2n-r,1} \otimes B_A^{1,1} \otimes B_A^{r-1,1} \otimes B^{\bullet}
\\ & \xmap{\sigma\circ\lb^A\!\circ\sigma} B_A^{2n-1,1} \otimes B_A^{2n-r+1,1} \otimes B_A^{1,1} \otimes B_A^{r-1,1} \otimes B^{\bullet}
\\ & \xmap{R^A} B_A^{1,1} \otimes B_A^{2n-1,1} \otimes B_A^{r-1,1} \otimes B_A^{2n-r+1,1} \otimes B^{\bullet}.
\end{split}
\end{equation}
Recall that in type $D_{n+1}^{(2)}$, we use $\lhsv$ in the case $r=n$, so $\virtual{\lb}$ is not needed in this case.
For $r = n$ in type $C_n^{(1)}$, we define $\virtual{\lb}$ by
\begin{equation}
\label{eq:lbv_A_type_C}
\begin{split}
B_A^{n,2} \otimes B^{\bullet} & \xmap{\ls^A} B_A^{n,1} \otimes B_A^{n,1} \otimes B^{\bullet}
\\ & \xmap{\lb^A} B_A^{1,1} \otimes B_A^{n-1,1} \otimes B_A^{n,1} \otimes B^{\bullet}
\\ & \xmap{R^A} B_A^{n,1} \otimes B_A^{1,1} \otimes B_A^{n-1,1} \otimes B^{\bullet}
\\ & \xmap{\sigma\circ\lb^A\!\circ\sigma}  B_A^{2n-1,1} \otimes B_A^{n+1,1} \otimes B_A^{1,1} \otimes B_A^{n-1,1} \otimes B^{\bullet}
\\ & \xmap{R^A} B_A^{1,1} \otimes B_A^{2n-1,1} \otimes B_A^{n-1,1} \otimes B_A^{n+1,1} \otimes B^{\bullet}.
\end{split}
\end{equation}
\end{subequations}

Finally for $s>1$, we define $\virtual{\ls}$ for $1 \leqslant r < n$ in types $C_n^{(1)}$ and 
$D_{n+1}^{(2)}$ and $1 \leqslant r \leqslant n$ in types $A_{2n}^{(2)}$ and $A_{2n}^{(2)\dagger}$ by
\begin{subequations}
\label{eq:lsv_A}
\begin{equation}
\label{eq:lsv_A_generic}
\begin{split}
B_A^{r,s} \otimes B_A^{2n-r,s} \otimes B^{\bullet} & \xmap{\ls^A} B_A^{r,1} \otimes B_A^{r,s-1} \otimes B_A^{2n-r,s} \otimes B^{\bullet}
\\ & \xmap{R^A} B_A^{2n-r,s} \otimes B_A^{r,s-1} \otimes B_A^{r,1} \otimes B^{\bullet}
\\ & \xmap{\ls^A} B_A^{2n-r,1} \otimes B_A^{2n-r,s-1} \otimes B_A^{r,s-1} \otimes B_A^{r,1} \otimes B^{\bullet}
\\ & \xmap{R^A} B_A^{r,1} \otimes B_A^{2n-r,1} \otimes B_A^{r,s-1} \otimes B_A^{2n-r,s-1} \otimes B^{\bullet}.
\end{split}
\end{equation}
For $r = n$ in type $D_{n+1}^{(2)}$, we define
\begin{equation}
\virtual{\ls} := \ls^A.
\end{equation}
For $r = n$ in type $C_n^{(1)}$, we define $\virtual{\ls}$ by
\begin{equation}
\label{eq:lsv_A_Cn}
\begin{split}
B_A^{n,2s} \otimes B^{\bullet} & \xmap{\ls^A} B_A^{n,1} \otimes B_A^{n,2s-1} \otimes B^{\bullet}
\\ & \xmap{R^A} B_A^{n,2s-1} \otimes B_A^{n,1} \otimes B^{\bullet}
\\ & \xmap{\ls^A} B_A^{n,1} \otimes B_A^{n,2s-2} \otimes B_A^{n,1} \otimes B^{\bullet}
\\ & \xmap{R^A} B_A^{n,1} \otimes B_A^{n,1} \otimes B_A^{n,2s-2} \otimes B^{\bullet}
\\ & \xmap{(\ls^A)^{-1}} B_A^{n,2} \otimes B_A^{n,2(s-1)} \otimes B^{\bullet}.
\end{split}
\end{equation}
\end{subequations}

\subsubsection{For ambient type $D$}

Let $B^{\bullet}$ be a tensor product of KR crystals of type $D^{(1)}_{n+1}$.
We define
\begin{equation}
\label{eq:lhv_D}
\virtual{\lh} := \begin{cases} 
	\lh^D \circ \lh^D \circ \ls^D & \text{for type } B_n^{(1)},  \\ \lh^D & \text{for type } A_{2n-1}^{(2)}. 
	\end{cases}
\end{equation}
For type $B_n^{(1)}$, we define $\lhsv \colon \virtual{B}^{n,1} \otimes B^{\bullet} \to B^{\bullet}$ as the composition $\lhsv := \lhs^D \circ \lhs^D$.
Note that the first $\lhs^D$ applies to $B_D^{n,1}$ and the second to $B_D^{n+1,1}$.

Next, the operation $\virtual{\lb}$ is defined by
\begin{equation}
\label{eq:lbv_D}
\virtual{\lb} := \begin{cases} 
	\lb^{(2)D}  & \text{if } 1<r<n \text{ for type } B_n^{(1)},  \\
	\lb^D & \text{if } 1<r\leqslant n \text{ for type } A_{2n-1}^{(2)},
	\end{cases}
\end{equation}
where $\lb^{(2)D}$ is given by Definition~\ref{defn:lbs}.
For $r = n$ in type $B_n^{(1)}$, we use $\lhsv$, so we do not need this case.
For $r = n$ in type $A_{2n-1}^{(2)}$, we define $\virtual{\lb}$ by considering the natural $U_q'(\g)$-crystal 
isomorphism~\cite[Thm.~3.3]{S:2005} between $B^{n,1}_D \otimes B^{n+1,1}_D$ and the crystal of
tableaux of height $n$, where the crystal operators are given by the usual $D_n^{(1)}$ tableaux rules of~\cite{FOS:2009};
the usual $\lb^D$ map is then applied to the height $n$ tableaux.

Finally, for type $B_n^{(1)}$, we define $\virtual{\ls}$ for $1\leqslant r < n$ (resp.~for $r = n$) by the same composition of maps 
as~\eqref{eq:lsv_A_Cn} (resp.~\eqref{eq:lsv_A_generic}) with $\ls^A$ and $R^A$ replaced by $\ls^D$ and $R^D$, respectively.
For type $A_{2n-1}^{(2)}$, define $\virtual{\ls} := \ls^D$ for $1\leqslant r<n$.
For $r = n$ in type $A_{2n-1}^{(2)}$, we define $\virtual{\ls}$ as the same sequence of maps 
as~\eqref{eq:lsv_A_generic} with $\ls^A$ and $R^A$ replaced by $\ls^D$ and $R^D$, respectively.

\subsection{Lifted operations on rigged configurations}
\label{subsection.lifted RC}

We define the operations $\virtual{\delta}$, $\deltasv$, $\virtual{\beta}$ and $\virtual{\gamma}$ on rigged configurations 
for the ambient types.

\subsubsection{For ambient type $A$}

Here we restrict ourselves to ambient type $A$.
First, for a rigged configuration with multiplicity array $L(\virtual{B}^{1,1}\ot B^{\bullet})$, we define $\virtual{\delta}$ by
\begin{equation}
\label{eq:deltav_A}
\virtual{\delta}  = \varsigma \circ \delta^A \circ \varsigma \circ \delta^A.
\end{equation}
On a rigged configuration with multiplicity array $L(\virtual{B}^{n,1} \ot B^{\bullet})$ for type $D_{n+1}^{(2)}$, we 
define $\deltasv$ by
\begin{equation}
\label{eq:deltavs_A}
\deltasv =  \delta^A \circ (\delta^A \circ \beta^A)^{n-1}.
\end{equation}

Next we define $\virtual{\beta}$ on a rigged configuration with multiplicity array $L(\virtual{B}^{r,1} \ot B^{\bullet})$ for 
$1<r<n$ in type $D_{n+1}^{(2)}$ and $1<r\leqslant n$ for all other types as
\begin{equation}
\label{eq:betav_A}
\virtual{\beta} = \begin{cases}
\varsigma \circ \beta^{A} \circ \varsigma \circ \beta^A \circ \gamma^A & \text{if } r = n \text{ for type } C_n^{(1)},
\\ \varsigma \circ \beta^{A} \circ \varsigma \circ \beta^A & \text{otherwise.}
\end{cases}
\end{equation}

Finally, we define $\virtual{\gamma}$ on a rigged configuration with multiplicity array $L(\virtual{B}^{r,s} \ot B^{\bullet})$ 
with $s>1$ as follows:
\begin{equation}
\label{eq:gammav_A}
\virtual{\gamma} = \begin{cases}
\gamma^A_{n,2s} & \text{if } r = n \text{ for type } D_{n+1}^{(2)},\\
(\gamma_{n,2}^A)^{-1} \circ \gamma_{n,2s-1}^A \circ \gamma^A_{n,2s} 
& \text{if } r = n \text{ for type } C_n^{(1)}, \\
\gamma_{2n-r,s}^A\circ\gamma^A_{r,s}  & \text{otherwise.}
\end{cases}
\end{equation}

\subsubsection{For ambient type $D$}

Now we restrict ourselves to ambient type $D$.
On a rigged configuration with multiplicity array $L(\virtual{B}^{1,1}\ot \virtual{B}^{\bullet})$, we define 
$\virtual{\delta}$ by
\begin{equation}
\label{eq:deltav_D}
	\virtual{\delta} = \begin{cases} \delta^D \circ \delta^D \circ \gamma^D & \text{for type} B_n^{(1)}, \\ 
		\delta^D & \text{for type } A_{2n-1}^{(2)}. \end{cases}
\end{equation}
For type $B_n^{(1)}$, we define $\deltasv := \deltas^D \circ \deltas^D$, where the first $\deltas^D$ is for $B^{n,1}$ 
and the second is for $B^{n+1,1}$.

For type $A_{2n-1}^{(2)}$ and $1<r \leqslant n$, we define
\begin{equation}
\label{eq:betav_D}
\virtual{\beta} := \begin{cases} 
	\beta^{(2)D}  & \text{if } 1 < r < n \text{ for type } B_n^{(1)},  \\
	\beta^D & \text{if } 1 < r \leqslant n \text{ for type } A_{2n-1}^{(2)}.,
	\end{cases}
\end{equation}
where $\beta^{(2)D}$ is given by Definition~\ref{defn:bs}.
Note that for $r=n$ in type $A_{2n-1}^{(2)}$, the map $\beta^D$ is still well-defined. 
For type $B_n^{(1)}$ and $r=n$, this case does not apply as we use $\deltasv$.

Finally, we define
\begin{equation}
\label{eq:gammav_D}
	\virtual{\gamma} = \begin{cases}
	\gamma^D_{r,s} & \text{if } 1\leqslant r < n \text{ for type } A_{2n-1}^{(2)},\\
	(\gamma^D_{r,2})^{-1} \circ \gamma^D_{r,2s-1} \circ \gamma^D_{r,2s} & \text{if } 1\leqslant r < n \text{ for type } B_n^{(1)},\\
	\gamma^D_{n+1,s} \circ \gamma^D_{n,s} &\text{if $r =n$.}
\end{cases}
\end{equation}

\subsection{Preparatory statements}

In order to prove our main theorem, we begin by proving some facts about the maps given in 
Sections~\ref{subsection.lifted paths} and~\ref{subsection.lifted RC}.

\begin{lemma}
\label{lemma:virtual_delta_A}
Suppose $\g$ has ambient type $A$.
We have
\[
\virtual{\delta}\bigl(\emb(\rc(L(B^{1,1}\ot B^{\bullet})))\bigr) \subset \emb\bigl(\rc(L(B^{\bullet}))\bigr).
\]
Moreover, we have $\virtual{\delta} \circ \emb = \emb \circ \delta$.
\end{lemma}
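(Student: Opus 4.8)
The plan is to reduce the statement to a commutation between the two type-$A$ operations defined by $\virtual\delta = \varsigma \circ \delta^A \circ \varsigma \circ \delta^A$ and the doubling structure of the embedding $\emb$ for ambient type $A$. Recall from~\eqref{equation.embedding A} that for $r<n$ the virtual KR crystal is $\virtual B^{r,s} = B^{r,s}_A \otimes B^{2n-r,s}_A$, and on the rigged configuration side the embedding~\eqref{eq:virtual_RC} places partitions symmetrically under the folding $\phi \colon I^X \searrow I$ with $\phi^{-1}(a) = \{a, 2n-a\}$ for $a < n$. Thus an embedded rigged configuration $(\virtual\nu, \virtual J) = \emb(\nu, J)$ is characterized by the symmetry $\virtual\nu^{(b)} = \virtual\nu^{(2n-b)}$ with matching riggings, and this symmetry is exactly what $\varsigma$ records (since $\sigma(b) = 2n - b$ under the induced folding automorphism for type $A_{2n-1}^{(1)}$).

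First I would unwind $\virtual\delta \circ \emb$. Applying the first $\delta^A$ to the symmetric rigged configuration $\emb(\nu,J)$ removes a box from node $1$ and proceeds through nodes; applying $\varsigma$ then reflects across the folding. The key observation is that because $\emb(\nu,J)$ is $\varsigma$-symmetric, the second pair $\varsigma \circ \delta^A$ acts on the reflected copy in a way mirroring the first. I would invoke Proposition~\ref{prop:involution}, which gives the commutativity $\Phi \circ \sigma = \varsigma \circ \Phi$ and hence that $\delta^A$ and $\varsigma$ interact compatibly: $\varsigma \circ \delta^A \circ \varsigma$ is the ``reflected'' version $\delta^A$ operating from the node $2n-1$ side. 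So $\virtual\delta$ is precisely the composition that first selects singular strings starting at node $1$ and then its mirror starting at node $2n-1$, which is the diagrammatic folding of a single type-$\g$ selection of singular strings under the scaling factors $\gamma$.

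The substantive step is to check that the combined selection of singular strings by the two $\delta^A$ maps respects the embedding constraints~\eqref{eq:virtual_m}--\eqref{eq:virtual_J}, namely that the output $\virtual\delta(\emb(\nu,J))$ again lies in $\emb(\rc(L(B^\bullet)))$, i.e.\ is again $\varsigma$-symmetric with riggings scaled by $\gamma_a$, and that it equals $\emb(\delta(\nu,J))$ with $\delta$ the type-$\g$ map of~\cite{OSS:2003a}. I expect the \textbf{main obstacle} to be verifying that the singular strings selected by $\delta^A$ in the first pass and by $\varsigma \circ \delta^A \circ \varsigma$ in the second are either disjoint or interact only through the shared node $n$ in the symmetric way dictated by the vacancy-number relation~\eqref{eq:virtual_vacancy}; in particular one must rule out the two passes competing for the same string in a way that breaks symmetry. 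The footnote to the definition of $\delta$ (independence of the choice of node when several are available) together with the fact that $\emb(\nu,J)$ is aligned should guarantee that the selections on the ``left half'' (nodes $<n$) and the mirrored ``right half'' stay coherent, and that the central node $n$ is handled correctly by the scaling factor $\gamma_n$.

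Finally I would assemble the equality $\virtual\delta \circ \emb = \emb \circ \delta$ by comparing, node by node, which boxes are removed: on the type-$\g$ side $\delta$ removes one box from each selected singular string in $(\nu,J)^{(a)}$, and under $\emb$ this corresponds to removing $\gamma_a$ boxes (distributed across the two mirror nodes $a$ and $2n-a$, or a single box at node $n$ scaled appropriately) from $\emb(\nu,J)$, which is exactly what the two applications of $\delta^A$ in $\virtual\delta$ accomplish. Having shown the image lands in $\emb(\rc(L(B^\bullet)))$ and that the two maps agree pointwise, both assertions of the lemma follow at once; the inclusion is then a formal consequence of the equality together with the well-definedness of $\delta$ on $\rc(L(B^\bullet))$.
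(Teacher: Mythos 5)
Your overall framework matches the paper's: unwind $\virtual{\delta} = \varsigma \circ \delta^A \circ \varsigma \circ \delta^A$ as two laps of box removal on the $\varsigma$-symmetric embedded rigged configuration, the first proceeding left-to-right from $\nu^{(1)}$ and the second right-to-left from $\nu^{(2n-1)}$, and then argue that the combined effect reproduces $\emb \circ \delta$. However, there is a genuine gap at exactly the point you flag as the ``main obstacle'' and then dismiss. The map $\delta$ for type $\g$ is \emph{not} ``a single type-$\g$ selection of singular strings under the scaling factors $\gamma$'': the algorithm of~\cite{OSS:2003a} for types $C_n^{(1)}$, $D_{n+1}^{(2)}$, $A_{2n}^{(2)}$, etc.\ involves \emph{quasi-singular} strings and a nontrivial case analysis at the node $n$ (the cases (P), (Q), (S), (QS) of~\cite[\S4.6]{OSS:2003a}). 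The entire content of the lemma is that the two ambient laps reproduce precisely that case analysis: e.g., when the second lap selects the \emph{same} string in $\virtual{\nu}^{(n)}$ as the first lap one is in case (S), and when it selects a strictly shorter string, that string must have been quasi-singular before the procedure, giving cases (Q)/(QS). Your appeal to the footnote (independence of the choice of node in the simply-laced algorithm) and to alignedness cannot substitute for this: the footnote says nothing about the two laps competing for a string, and alignedness is a hypothesis on $\varepsilon$-values, not a mechanism that resolves the interaction at node $n$.

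A second, smaller gap: for types $C_n^{(1)}$, $A_{2n}^{(2)}$, $A_{2n}^{(2)\dagger}$ the image $\emb\bigl(\rc(L)\bigr)$ is not characterized by $\varsigma$-symmetry alone; conditions~\eqref{eq:virtual_RC} and~\eqref{eq:virtual_RC_A2} impose divisibility constraints on part lengths and riggings (parts of $\virtual{\nu}^{(b)}$ lie in $\gamma_a \ZZ_{>0}$, riggings in $\gamma_a J_i^{(a)}$ or $2J_i^{(n)}$), and the paper's remark that $\varsigma$-invariance characterizes the image applies only to $D_{n+1}^{(2)}$ and $A_{2n-1}^{(2)}$. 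You acknowledge the scaling in passing but never verify that the two laps preserve these constraints. The paper's proof handles both issues by explicitly tracking, for each possible return value of the ambient algorithm (illustrated via $2n \otimes (2n)^{\vee}$, $n \otimes n^{\vee}$, $n+1 \otimes n^{\vee}$ under the explicit element-level embedding $\emb \colon B^{1,1} \to B_A^{1,1} \otimes B_A^{2n-1,1}$), which strings are removed in each lap and matching this against~\cite[\S4.6]{OSS:2003a}; the final identity $\virtual{\delta} \circ \emb = \emb \circ \delta$ is then imported from~\cite[\S3.4]{OSS03II}. Without engaging with the quasi-singular string mechanism, your argument cannot establish that the composite agrees with the specific map $\delta$ of~\cite{OSS:2003a}.
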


\begin{proof}
We show that $\virtual{\delta}$ is well-defined and that $\virtual{\delta} \circ \emb = \emb \circ \delta$.
We explain the most fundamental case $\g=D^{(2)}_{n+1}$ as the rest of the cases are similar.
The involution $\sigma$ is given by 
\begin{align*}
	\sigma \colon B_A^{1,1} & \longrightarrow B_A^{2n-1,1},\\
	i & \longmapsto (2n-i)^{\vee}.
\end{align*}
Here $a^{\vee}$ stands for the element of $B_A^{2n-1,1}$, which is naturally isomorphic to 
$(B_A^{1,1})^{\vee}$, without letter $a$.
Hence, the embedding $\emb \colon B^{1,1} \to \virtual{B}^{1,1} \subset B_A^{1,1}\ot B_A^{2n-1,1}$  is realized as
\begin{align*}
a & \mapsto a \ot (2n-a+1)^{\vee} \qquad\qquad (1 \leqslant a \leqslant n),
\\ \overline{a} & \mapsto (2n-a+1) \ot a^{\vee} \qquad\qquad (1 \leqslant a \leqslant n),
\\ 0 & \mapsto n \ot n^{\vee},
\\ \emptyset & \mapsto 2n \ot (2n)^{\vee}.
\end{align*}
By~\cite{SS:X=M}, the composition $\varsigma \circ \delta^A \circ \varsigma$ is similar to $\delta^A$ except 
we start at the rightmost rigged partition $(\nu, J)^{(2n-1)}$ and proceed to the left.
Therefore, doing $\virtual{\delta}$ means that we apply the usual box-removing procedure $\delta^A$ from 
the leftmost rigged partition moving right first and then perform the same procedure from the rightmost 
one moving left after.

Let us consider $2n\ot (2n)^{\vee} \in B_A^{1,1} \ot B_A^{2n-1,1}$
as an example and see what happens on the rigged configuration side. The fact that
the left factor is $2n$ means that in the first lap, a box is removed from $\nu^{(a)}$
for all $a = 1, \dotsc, 2n-1$. If the length of the string $\ell^{(2n-1)}$ 
from which a box is removed is larger
than 1, then a box in $\nu^{(2n-1)}$ can be removed in the second lap, which contradicts
that the second component is $(2n)^{\vee}$. Hence, we have $\ell^{(2n-1)} = 1$, which
forces that $\ell^{(a)} = 1$ for all $a$ in the first lap. This case corresponds to (P)
in~\cite[\S4.6]{OSS:2003a}. 

Let us take $n\ot n^{\vee}$ for next example. In this case, a box is removed 
from $\nu^{(a)}$ for $a = 1, \dotsc, n-1$ and stop in the first lap, and then a box is removed
from $\nu^{(2n-a)}$ for $a = 1, \dotsc, n$ and stop in the second lap. Since the box 
in $\nu^{(n)}$ removed in the second lap was not removed in the first lap, it should
have been quasi-singular before the procedure. This case corresponds to (Q)
in~\cite[\S4.6]{OSS:2003a}. 

Finally let us take $n+1 \ot n^{\vee}$ (which corresponds to $\overline{n}$ in type $D_{n+1}^{(2)}$).
In this case, a box is removed from $\nu^{(a)}$ for $a = 1, \dotsc, n$ and stop in the first lap, and then a box is removed
from $\nu^{(2n-a)}$ for $a=1,\dotsc,n$ and stop in the second lap. At $\nu^{(n)}$
there are two possibilities. The first one is that the string from which a box
is removed in the second lap coincides with the one in the first lap. This corresponds 
to case (S) in~\cite[\S4.6]{OSS:2003a}. The second one is that the string from which 
a box is removed in the second lap is strictly smaller than the one in the first lap.
Since the former string should have been quasi-singular before the procedure,
this corresponds to case (QS) in~\cite[\S4.6]{OSS:2003a}.

From~\cite[\S 3.4]{OSS03II}, we have $\virtual{\delta} \circ \emb = \emb \circ \delta$.
\end{proof}

\begin{remark}
The map $\virtual{\delta}$ was shown to be well-defined in~\cite[Thm.~7.1]{OSS03III} for types 
$C_n^{(1)}, A_{2n}^{(2)}, D_{n+1}^{(2)}$, but not $A_{2n}^{(2)\dagger}$.
Thus, we have given an alternative proof, which includes type $A_{2n}^{(2)\dagger}$.
\end{remark}

\begin{lemma}
\label{lemma:virtual_delta_D}
Suppose $\g$ has ambient type $D$.
We have
\[
\virtual{\delta}\bigl(\emb(\rc(L(B^{1,1}\ot B^{\bullet})))\bigr) \subset \emb\bigl(\rc(L(B^{\bullet}))\bigr).
\]
Moreover, we have $\virtual{\delta} \circ \emb = \emb \circ \delta$.
\end{lemma}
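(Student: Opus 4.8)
The strategy mirrors the proof of Lemma~\ref{lemma:virtual_delta_A}, which is the analogue for ambient type $A$: I want to show that the composite $\virtual{\delta}$ defined in~\eqref{eq:deltav_D} respects the embedding $\emb$, and that its restriction to $\emb\bigl(\rc(L(B^{1,1}\ot B^{\bullet}))\bigr)$ coincides with $\emb\circ\delta$. Since the two ambient-$D$ cases behave differently, I would split along the definition of $\virtual{\delta}$ in~\eqref{eq:deltav_D}. For type $A_{2n-1}^{(2)}$ the scaling factors are all $1$, so $\virtual{\delta}=\delta^D$ and the embedded rigged configuration~\eqref{eq:virtual_RC} is essentially a rigged configuration of the ambient type $D_{n+1}^{(1)}$ with the folding identification; here the statement should reduce directly to the known intertwining property of $\delta^D$ together with the characterization of $\emb\bigl(\rc(L)\bigr)$ by~\eqref{eq:virtual_m}--\eqref{eq:virtual_J}. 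For type $B_n^{(1)}$, where $\virtual{\delta}=\delta^D\circ\delta^D\circ\gamma^D$ and $\gamma=(2,\dotsc,2,1)$, the work is genuinely two-step and is the case I would treat in detail.

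For the $B_n^{(1)}$ case, the first thing I would record is how $\emb$ interacts with the doubling built into $\gamma$: by~\eqref{eq:virtual_m}--\eqref{eq:virtual_J}, an embedded configuration has $\virtual{m}_{\gamma_a i}^{(b)}=m_i^{(a)}$ with all riggings scaled by $\gamma_a$, so for $a<n$ the rows of $\virtual{\nu}^{(b)}$ come in even lengths, and the vacancy numbers satisfy $\virtual{p}_i^{(b)}=\gamma_a p_i^{(a)}$ by~\eqref{eq:virtual_vacancy}. The initial $\gamma^D$ in the composite adjusts vacancy numbers to account for the split-off factor but changes no rows. I would then trace the two successive $\delta^D$ passes and argue that, on an embedded configuration, they must remove boxes in matched pairs: the first $\delta^D$ selects a singular string in $\virtual{\nu}^{(b)}$, and because singularity and string lengths are dictated by the scaled data, the second $\delta^D$ is forced to select the partner string that restores evenness and re-singularity. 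This paired box-removal is exactly what $\emb$ does to a single application of the type-$B_n^{(1)}$ map $\delta$, so the image lands back in $\emb\bigl(\rc(L(B^{\bullet}))\bigr)$ and the square commutes.

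The cleanest way to organize the commutation claim, and the one I would prefer, is to invoke the general virtualization machinery rather than re-deriving it by hand: the intertwining $\virtual{\delta}\circ\emb=\emb\circ\delta$ is a statement that $\delta$ in type $\g$ lifts to the ambient operation, and this is precisely the content cited at the end of Lemma~\ref{lemma:virtual_delta_A} from~\cite[\S3.4]{OSS03II}. So the bulk of the argument is checking well-definedness, i.e.\ that $\virtual{\delta}$ sends embedded configurations to embedded configurations, after which $\virtual{\delta}\circ\emb=\emb\circ\delta$ follows from the same source. For the well-definedness I would lean on Proposition~\ref{prop:betas_properties}-style vacancy-number bookkeeping and on the explicit descriptions of $\delta^D$ and the embeddings~\eqref{equation.embedding D}.

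**Main obstacle.** The hard part is the $B_n^{(1)}$ pairing: showing that the two $\delta^D$ passes are genuinely forced to remove boxes from partner strings of equal length (so that the output is again even-lengthed and re-singular, hence in the image of $\emb$), and in particular handling the spin node $a=n$ where $\gamma_n=1$ breaks the doubling and the behavior of $\deltas^D$ on the pair $B_D^{n,1}\ot B_D^{n+1,1}$ enters. Verifying that the selected strings at $\nu^{(n)}$ behave compatibly under the two passes—paralleling the case analysis (P), (Q), (S), (QS) of Lemma~\ref{lemma:virtual_delta_A}—is where the genuine combinatorial care is needed.
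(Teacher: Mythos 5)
Your proposal matches the paper's proof in essence: both reduce the statement to (i) a well-definedness check that the one or two applications of $\delta^D$ on an embedded configuration remove boxes in the forced, paired way dictated by the doubling in~\eqref{eq:virtual_RC} (the paper does this by matching against the explicit case analysis of $\delta$ in~\cite[\S4.2, \S4.5]{OSS:2003a}, e.g.\ the cases (Q)), and (ii) a citation to~\cite{OSS03II} (the paper uses Prop.~3.9 there) for the intertwining $\virtual{\delta}\circ\emb=\emb\circ\delta$. One small correction: for this lemma the left-most factor is $B^{1,1}$ with $\virtual{B}^{1,1}=B_D^{1,2}$ in type $B_n^{(1)}$, so $\deltas^D$ never enters; the subtlety at the spin nodes is only that $\nu^{(n)}$ and $\nu^{(n+1)}$ of the ambient configuration are equal copies of the unscaled $\nu^{(n)}$, and in the $B_n^{(1)}$ case the two $\delta^D$ passes remove two boxes from the \emph{same} (doubled) string for $a<n$ rather than from a separate partner string.
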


\begin{proof}
The proof is essentially done in~\cite[\S4]{OSS:2003a}.
We check for each $\g$ with ambient type $D$ that the operation $\delta$ 
is consistent with the one given there.

Consider $\g$ of type $A^{(2)}_{2n-1}$. In this case $\virtual{B}^{1,1} = B_D^{1,1}$.
Just by ignoring $(\virtual{\nu}, \virtual{J})^{(n+1)}$ since $(\virtual{\nu}, \virtual{J})^{(n+1)} = (\virtual{\nu}, \virtual{J})^{(n)}$ (so $\virtual{\delta} = \delta^D$ selects the same row in both rigged partitions), we obtain the algorithm in~\cite[\S4.5]{OSS:2003a}.

For $\g$ of type $B^{(1)}_n$, we have $\virtual{B}^{1,1} = B_D^{1,2}$.
As an illustration let us look at the case when $n\bar{n} \in B_D^{1,2}$ corresponding
to $0 \in B^{1,1}$ is removed and compare it with the algorithm of \cite[\S4.2]{OSS:2003a} as the other cases are similar.
Note that all the configurations $\virtual{\nu}^{(a)}$ in the ambient rigged configurations are doubled from the ones there for all $a < n$.
The first case of (Q) occurs exactly when a box is removed from a string of 
length $2\ell^{(a)}$ in $\virtual{\nu}^{(a)}$ for $a=1, \dotsc, n-1$ and stopped in the first application of $\delta^D$
and a box is removed from a string of length $2\ell^{(a)}-1$ for $a=1, \dotsc, n-1$ and also from a string of length $2\ell^{(n-1)}-1$ and stopped in the second application.
The second case of (Q) is the same as the previous one except that in the second
application of $\delta^D$ the box is removed from a string of length not less than $2\ell^{(n-1)}$.
This box was not removed in the first application of $\delta^D$, since it was quasi-singular.

From~\cite[Prop.~3.9]{OSS03II}, we have $\virtual{\delta} \circ \emb = \emb \circ \delta$.
\end{proof}

\begin{proposition}
\label{prop:virtualization_B_to_D}
Let $\g$ be of type $B_n^{(1)}$.
Consider an element $(s_1, \dotsc, s_n) \in B^{n,1}$ given by the $\pm$-vector from~\cite{KN:1994}.
Denote the elements in $B^{r,1}$, for $r = n, n+1$, of type $D_{n+1}^{(1)}$ by $\pm$-vectors from~\cite{KN:1994} as well.
Define $\emb \colon B^{n,1} \to \virtual{B}^{n,1} = B^{n,1} \otimes B^{n+1,1}$ by
\[
\emb(s_1, \dotsc, s_n) = (s_1, \dotsc, s_n, s_{n+1}^-) \otimes (s_1, \dotsc, s_n, s_{n+1}^+),
\]
where $s_{n+1}^{\pm}$ is such that
\[
s_1 \cdots s_n s_{n+1}^{\pm} = \pm 1.
\]
Then $\emb$ is the virtualization map and the image is characterized by $\sigma(b) = R(b)$.
\end{proposition}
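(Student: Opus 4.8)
The plan is to first pin down the combinatorial models, then show $\emb$ realizes the virtualization, and finally establish the characterization. For $\virtual{\g}=D_{n+1}^{(1)}$ the folding has $\phi^{-1}(a)=\{a\}$ with $\gamma_a=2$ for $a\neq n$ and $\phi^{-1}(n)=\{n,n+1\}$ with $\gamma_n=1$, so $\virtual{f}_a=(f^D_a)^2$ for $a\neq n$ while $\virtual{f}_n=f^D_n f^D_{n+1}$ (these commute since $n$ and $n+1$ are nonadjacent in $D_{n+1}$). Using the $\pm$-vector model of~\cite{KN:1994}, on a $D_{n+1}$-column $f^D_i$ ($i<n$) sends $(s_i,s_{i+1})=(+,-)\mapsto(-,+)$, $f^D_n$ sends $(s_n,s_{n+1})=(+,-)\mapsto(-,+)$, and $f^D_{n+1}$ sends $(s_n,s_{n+1})=(+,+)\mapsto(-,-)$, whereas on a $B_n$-column $f_i$ ($i<n$) acts as in type $D$ and $f_n$ flips $s_n\colon+\mapsto-$.

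First I would verify that $\emb$ is the virtualization map, i.e.\ $\emb\circ f_a=\virtual{f}_a\circ\emb$ (and the analogue for $e_a$) together with the weight condition. The weight identity is immediate: the two factors of $\emb(b)$ agree in the first $n$ coordinates and carry opposite last entries $s_{n+1}^-=-s_{n+1}^+$, so the contributions at $\epsilon_{n+1}$ cancel and the result matches $\Psi$ via $\gamma_a=2$ for $a<n$. For $1\leqslant a\leqslant n-1$ and for the fork node $a=0$, both factors carry identical entries in the affected coordinates, so the signature rule makes $(f^D_a)^2$ act once on each factor and reproduces $f_a$ on the $B_n$-column. The essential case is $a=n$, where the factors differ precisely in the last coordinate: I would run the signature rule for $f^D_n\circ f^D_{n+1}$ by hand, distinguishing $s_n=\pm$ and the sign of $s_1\cdots s_{n-1}$, and check that $f^D_{n+1}$ followed by $f^D_n$ flips $s_n$ in both factors at once while preserving the constraints $s_1\cdots s_n s_{n+1}^{\pm}=\pm1$; when $s_n=-$ one verifies $\varphi_n^X(\emb(b))=0$, so $\virtual{f}_n\emb(b)=0=\emb(f_n b)$. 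This shows the image of $\emb$ is the aligned virtual crystal $\virtual{B}^{n,1}\subset B^{n,1}_D\otimes B^{n+1,1}_D$.

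Next I would prove $\sigma=R$ on this image. Both $R$ and $\sigma$ are maps $B^{n,1}_D\otimes B^{n+1,1}_D\to B^{n+1,1}_D\otimes B^{n,1}_D$. As a $U_q'(\virtual{\g})$-crystal isomorphism, $R$ commutes with every $f^D_b$ and $e^D_b$, hence with each $\virtual{f}_a,\virtual{e}_a$. Since $\sigma$ intertwines $f^D_b$ with $f^D_{\sigma(b)}$ and preserves each fiber $\phi^{-1}(a)$ (it fixes all $a<n$ and swaps $n\leftrightarrow n+1$), it likewise commutes with $\virtual{f}_a,\virtual{e}_a$. On the maximal element, $\emb(u_{\clfw_n})$ is the tensor product of the maximal elements of $B^{n,1}_D$ and $B^{n+1,1}_D$, and both $R$ and $\sigma$ send it to the maximal element of $B^{n+1,1}_D$ tensored with that of $B^{n,1}_D$; thus $T:=R^{-1}\circ\sigma$ commutes with all virtual operators and fixes $\emb(u_{\clfw_n})$. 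As $\emb(B^{n,1})$ is connected and generated from $\emb(u_{\clfw_n})$ by the $\virtual{f}_a$, propagating through the crystal gives $T=\id$ on the image, that is, $\sigma(b)=R(b)$ for every $b$ in the image.

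Finally, to obtain the reverse inclusion and hence the characterization, I would reduce to highest weight elements, using that $\{b:\sigma(b)=R(b)\}=\mathrm{Fix}(T)$ is stable under the virtual operators. On the $I_0$-highest (i.e.\ path) representatives I would pass to the established type-$D$ bijection: Proposition~\ref{prop:involution} gives $\Phi\circ\sigma=\varsigma\circ\Phi$, and $R$ maps to the identity on rigged configurations~\cite{OSSS16}, so $\sigma(x)=R(x)$ becomes $\varsigma(\Phi(x))=\Phi(x)$, i.e.\ $\nu^{(n)}=\nu^{(n+1)}$ with equal riggings; comparison with~\eqref{eq:virtual_RC} at the fiber $\phi^{-1}(n)=\{n,n+1\}$ identifies these with $\emb(\rc)$ and forces $x\in\emb(\p(B^{n,1}))$. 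I expect the real work to lie in the $a=n$ signature computation of the second paragraph and in this last step, where one must confirm that for the present multiplicity array the condition $\nu^{(n)}=\nu^{(n+1)}$ (equivalently $\sigma=R$) already entails the full alignment of the remaining partitions, so that $\mathrm{Fix}(T)$ contains no spurious components; alternatively this can be settled by computing the spin $\times$ spin $R$-matrix in type $D$ directly and counting the $2^n$ solutions.
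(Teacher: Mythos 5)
Your proof follows essentially the same route as the paper's: a direct check that $\emb$ intertwines the $B_n^{(1)}$ crystal operators with the virtual operators $(f_a^D)^2$ and $f_n^D f_{n+1}^D$, plus the observation that $R^{-1}\circ\sigma$ commutes with the virtual operators and fixes the image of the maximal vector, which generates the whole image. The paper's own proof is in fact terser --- it leaves the reverse inclusion of the characterization entirely implicit --- so your additional care there (and your explicit flagging that the spin$\,\otimes\,$spin count of fixed points, rather than a reduction to highest weight elements, is the clean way to rule out spurious components) only supplements what the paper records.
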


\begin{proof}
This is straightforward from the definition of the (virtual) crystal operators.
The uniqueness comes from the fact that tensor products of KR crystals are generated by the (unique) maximal vector.
The characterization of the images comes from the fact that the maximal vector must map to the maximal vector, 
which has the desired property.
\end{proof}

\begin{proposition} \label{prop:emb_rc}
The operations $\virtual{\delta}$, $\virtual{\beta}$ and $\virtual{\gamma}$ for ambient types $A$ and $D$
as well as $\deltasv$ for types $D^{(2)}_{n+1}$ and $B_n^{(1)}$ have the following properties:
\begin{itemize}
\item[(1)] $\virtual{\delta}\bigl(\emb(\rc(L(B^{1,1}\ot B^{\bullet})))\bigr) \subset \emb\bigl(\rc(L(B^{\bullet}))\bigr)$,
\item[(1')] $\deltasv\bigl(\emb(\rc(L(B^{n,1}\ot B^{\bullet})))\bigr) \subset \emb\bigl(\rc(L(B^{\bullet}))\bigr)$,
\item[(2)] $\virtual{\beta}\bigl(\emb(\rc(L(B^{r,1}\ot B^{\bullet})))\bigr) \subset \emb\bigl(\rc(L(B^{1,1}\ot B^{r-1,1}\ot B^{\bullet}))\bigr)$,
\item[(3)] $\virtual{\gamma}\bigl(\emb(\rc(L(B^{r,s}\ot B^{\bullet})))\bigr) \subset \emb\bigl(\rc(L(B^{r,1}\ot B^{r,s-1}\ot B^{\bullet}))\bigr)$.
\end{itemize}
\end{proposition}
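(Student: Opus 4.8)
The plan is to verify each inclusion against the characterization of the image $\emb\bigl(\rc(L)\bigr)$ supplied by~\eqref{eq:virtual_RC}: an ambient rigged configuration $(\virtual{\nu},\virtual{J})$ lies in $\emb\bigl(\rc(L)\bigr)$ if and only if it is \emph{$\varsigma$-symmetric} (for all $b,b'\in\phi^{-1}(a)$ the partitions $\virtual{\nu}^{(b)}$ and $\virtual{\nu}^{(b')}$ coincide, with riggings scaled by the appropriate factor) and satisfies the divisibility conditions that every part length and every rigging of $\virtual{\nu}^{(b)}$ is a multiple of $\gamma_a$ for $b\in\phi^{-1}(a)$. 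Property~(1) is already in hand, being exactly the statement of Lemma~\ref{lemma:virtual_delta_A} for ambient type $A$ and of Lemma~\ref{lemma:virtual_delta_D} for ambient type $D$. For the remaining maps, which are defined in Section~\ref{subsection.lifted RC} as explicit compositions of the ambient operations $\delta^A,\beta^A,\gamma^A$ (resp.~$\delta^D,\beta^D,\gamma^D$) together with $\varsigma$, and in the spin cases $\deltas^D$ or $\emb_{2\times}$, the strategy is to trace the composition through its constituents and check that the two conditions above are restored at the end, even though the individual constituents may break the $\varsigma$-symmetry.

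I would dispose of property~(3) first. Since $\gamma^X$ acts as the identity on the rigged-partition data $(\virtual{\nu},\virtual{J})$ and merely reinterprets the multiplicity array, every composite $\virtual{\gamma}$ appearing in~\eqref{eq:gammav_A} and~\eqref{eq:gammav_D} leaves $(\virtual{\nu},\virtual{J})$ unchanged. Hence both the $\varsigma$-symmetry and the divisibility conditions are preserved automatically, and the only remaining point is the bookkeeping statement that the source array $L(B^{r,s}\ot B^{\bullet})$ is carried to $L(B^{r,1}\ot B^{r,s-1}\ot B^{\bullet})$ compatibly with $\emb$, which is immediate from~\eqref{equation.embedding A} and~\eqref{equation.embedding D}.

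For property~(2) in ambient type $A$ the map $\virtual{\beta}=\varsigma\circ\beta^A\circ\varsigma\circ\beta^A$ (precomposed with $\gamma^A$ when $r=n$ for $C_n^{(1)}$, cf.~\eqref{eq:betav_A}) adds length-$1$ singular strings to the nodes $1,\dotsc,r-1$ in its first pass and, after conjugation by $\varsigma$, to the mirror nodes $2n-r+1,\dotsc,2n-1$ in its second pass. The key observation is that for $r<n$ these two node sets are non-adjacent in the Dynkin diagram of $A_{2n-1}^{(1)}$, so the two passes do not interfere: starting from a $\varsigma$-symmetric configuration the additions are symmetric, the new strings have length $1=\gamma_a\cdot 1$ (recall $\gamma_a=1$ at every folded node), and being singular their riggings agree with the final, again $\varsigma$-symmetric, vacancy numbers. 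This yields precisely $\emb\bigl(\rc(L(B^{1,1}\ot B^{r-1,1}\ot B^{\bullet}))\bigr)$. The case $r=n$ for $C_n^{(1)}$, where $\gamma_n=2$ and the extra $\gamma^A$ and the splitting of $B_A^{n,2}$ intervene, is checked separately, the salient point being that the strings added at node $n$ must have even length. In ambient type $D$ one invokes~\eqref{eq:betav_D}: for $A_{2n-1}^{(2)}$ the map is the ordinary $\beta^D$ (all $\gamma_a=1$), while for $B_n^{(1)}$ it is $\beta^{(2)D}$ of Definition~\ref{defn:bs}, adding length-$2$ singular strings whose riggings $\virtual{p}_2^{(a)}=2p_1^{(a)}$ are even by~\eqref{eq:virtual_vacancy}, matching the doubling $\gamma_a=2$ required by $\emb$ for $a<n$ and leaving the folded node $n$ untouched.

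The delicate case, which I expect to be the main obstacle, is property~(1$'$) for $\deltasv$. For $B_n^{(1)}$ one has $\deltasv=\deltas^D\circ\deltas^D$, and I would peel off the two spin factors $B_D^{n,1}$ and $B_D^{n+1,1}$ in turn, using Proposition~\ref{prop:virtualization_B_to_D} (which characterizes the image by $\sigma(b)=R(b)$) together with the established ambient-$D$ spin bijection to see that each $\deltas^D$ respects the image. For $D_{n+1}^{(2)}$, where $\deltasv=\delta^A\circ(\delta^A\circ\beta^A)^{n-1}$ acts on the single ambient factor $B_A^{n,1}$ by~\eqref{eq:deltavs_A}, the difficulty is that the individual $\delta^A$ and $\beta^A$ steps each break $\varsigma$-symmetry, so one must show that the \emph{entire} composition restores it. The plan is to argue that this composition strips the height-$n$ ambient column box-by-box, and to control the removals at the central node $n$ exactly as in the proof of Lemma~\ref{lemma:virtual_delta_A}, via the cases (P), (Q), (S), (QS) of~\cite[\S4.6]{OSS:2003a} governing the (quasi-)singular strings there. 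Concretely I would induct on the number of $(\delta^A\circ\beta^A)$ rounds, carrying as invariant that the partial configuration still embeds, and read off the singular/quasi-singular structure at node $n$ to guarantee that the final $\delta^A$ returns an element of $\emb\bigl(\rc(L(B^{\bullet}))\bigr)$; matching these central-node combinatorics with the doubling implicit in the type-$\g$ definition~\eqref{eq:spins_Dtwisted} is the crux of the argument.
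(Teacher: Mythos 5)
Your handling of (1), (2) and (3) is sound and essentially the paper's own route: (1) is exactly Lemmas~\ref{lemma:virtual_delta_A} and~\ref{lemma:virtual_delta_D}, and the paper disposes of (2) and (3) as ``straightforward computations'' of precisely the kind you spell out ($\varsigma$-symmetric placement of the new singular strings, preservation of vacancy numbers, and the fact that $\gamma$ leaves the rigged-partition data untouched). A minor quibble on (2): the relevant point is not non-adjacency of the two node sets but that $\beta^A$ preserves vacancy numbers, so a $\varsigma$-symmetric input with symmetric string additions stays symmetric.

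The genuine gap is in (1$'$), which you rightly call the crux but do not close. For $D_{n+1}^{(2)}$ your induction carries the invariant that ``the partial configuration still embeds,'' but this invariant is not well-posed: after the first $\beta^A$ the multiplicity array is $L(B_A^{1,1}\ot B_A^{n-1,1}\ot\virtual{B}^{\bullet})$, and after each round it is $L(B_A^{k,1}\ot\virtual{B}^{\bullet})$ with $k<n$, none of which is of the form $\virtual{L}(\,\cdot\,)$ for any type-$\g$ tensor product; moreover, as you yourself note, the intermediate configurations fail $\varsigma$-symmetry. There is therefore no image of $\emb$ for the partial configurations to lie in, and the induction cannot even be set up. What is actually needed is to intertwine the full ambient composition~\eqref{eq:deltavs_A} with the type-$\g$ composition~\eqref{eq:spins_Dtwisted} through the doubling map $\emb_{2\times}$, matching each $\delta^A\circ\beta^A$ round against the corresponding $\delta\circ\beta$ round on the doubled type-$\g$ side; this is what the paper imports from~\cite[Lemma~3.9, Prop.~8.3]{Scrimshaw17}. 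For $B_n^{(1)}$, your appeal to Proposition~\ref{prop:virtualization_B_to_D} is a category error for this statement: that proposition characterizes the image of $\emb$ on crystal elements ($\pm$-vectors), not on rigged configurations, and transporting it to the RC side would require the type-$B_n^{(1)}$ bijection $\Phi$, which is not yet available here (Proposition~\ref{prop:emb_rc} is an input to Theorem~\ref{theorem.main}). The paper instead cites~\cite[Thm.~6.1]{Scrimshaw17}, which proves the RC-side inclusion directly; you need either that argument or an equivalent purely rigged-configuration one.
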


\begin{proof}
The proof of~(1) is given by Lemma~\ref{lemma:virtual_delta_A} (resp.\ Lemma~\ref{lemma:virtual_delta_D}) 
for ambient type $A$ (resp.\ ambient type $D$).
For~(1') in type $D_{n+1}^{(2)}$, this follows from~\cite[Lemma~3.9]{Scrimshaw17} and the fact
that there exists a virtualization map (see also~\cite[Prop.~8.3]{Scrimshaw17}).
For~(1') in type $B_n^{(1)}$, this is given by~\cite[Thm.~6.1]{Scrimshaw17}.
For~(2), this is a straightforward computation.
For~(3), this is the identity map on rigged configurations and a straightforward computation of the changes 
in vacancy numbers shows that $\virtual{\gamma}$ is well-defined.
\end{proof}

\begin{remark}
For $r < n$ in type $A_{2n-1}^{(2)}$, we note that Proposition~\ref{prop:emb_rc} was shown 
in~\cite[Thm.~6.2]{schilling.scrimshaw.2015}.
\end{remark}

\begin{proposition} \label{prop:emb path}
The operations $\virtual{\lh}$, $\virtual{\lb}$ and $\virtual{\ls}$ for ambient types $A$ and $D$ as well
as $\lhsv$ for types $D^{(2)}_{n+1}$ and $B_n^{(1)}$ have the following properties:
\begin{itemize}
\item[(1)] $\virtual{\lh}\bigl(\emb(\p(B^{1,1}\ot B^{\bullet}))\bigr) \subset \emb\bigl(\p(B^{\bullet})\bigr)$,
\item[(1')] $\lhsv\bigl(\emb(\p(B^{n,1}\ot B^{\bullet}))\bigr) \subset \emb\bigl(\p(B^{\bullet})\bigr)$,
\item[(2)] $\virtual{\lb}\bigl(\emb(\p(B^{r,1}\ot B^{\bullet}))\bigr) \subset \emb\bigl(\p(B^{1,1}\ot B^{r-1,1}\ot B^{\bullet})\bigr)$,
\item[(3)] $\virtual{\ls}\bigl(\emb(\p(B^{r,s}\ot B^{\bullet}))\bigr) \subset \emb\bigl(\p(B^{r,1}\ot B^{r,s-1}\ot B^{\bullet})\bigr)$.
\end{itemize}
\end{proposition}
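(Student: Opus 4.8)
The plan is to establish each of the four inclusions directly on the path side, by tracking how the lifted operation transforms the tensor factors of an embedded path, in the spirit of the direct arguments used for the rigged-configuration analogues in Lemmas~\ref{lemma:virtual_delta_A} and~\ref{lemma:virtual_delta_D} and in Proposition~\ref{prop:emb_rc}. It is important to argue directly rather than through the ambient bijection $\Phi^X$: the compatibility $\Phi^X \circ \emb = \emb \circ \Phi$ is itself a consequence of Proposition~\ref{prop:emb_rc} together with the present statement, so invoking it here would be circular. Throughout I would use three standing facts: that $\emb(\p(B)) = \p(\emb(B))$; that virtual crystals form a tensor category (Proposition~\ref{prop:virtual_tensor_category}), so $\emb$ is a tensor functor and the trailing factor $B^\bullet$ may be treated as inert while one analyses the leftmost KR factor; and that each constituent map $\lh^A, \lb^A, \ls^A$ (resp.\ $\lh^D$ etc.), together with $\sigma$ and $R^A/R^D$, is either a path-preserving removal/splitting map or a (twisted) crystal isomorphism, so each lifted operation sends ambient paths to ambient paths. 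The only real content in each case is that the output lies in the \emph{virtual} subcrystal, i.e.\ in the image of $\emb$ for the target.

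For the ``hat'' operations (1) and (1') I would argue by explicit factor bookkeeping, since $\lh$ is not a $U_q(\g_0)$-crystal morphism (it drops a box and does not commute with the $f_a$). Taking ambient type $A$ with $\g = D^{(2)}_{n+1}$ as the model, an embedded element has leftmost factors $\emb(c) = c_1 \otimes c_2 \in B^{1,1}_A \otimes B^{2n-1,1}_A$ with $c_2$ the $\sigma$-partner of $c_1$; the first $\lh^A$ deletes $c_1$, the intervening $\sigma$ (an involution preserving the virtual crystal, since it permutes the fibres $\phi^{-1}(a)$ and hence commutes with the operators $\virtual{e}_a, \virtual{f}_a$) carries $c_2$ into $B^{1,1}_A$, the second $\lh^A$ deletes it, and the final $\sigma$ restores the trailing factor. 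Since $\sigma^2 = \id$ the net effect on $B^\bullet$ is trivial, and one obtains the stronger identity $\virtual{\lh}\circ\emb = \emb\circ\lh$, from which the inclusion is immediate. The spin map $\lhsv$ of (1') is handled the same way: for $B^{(1)}_n$ one has $\lhsv = \lhs^D \circ \lhs^D$, peeling off the two spin columns of the embedded pair $B^{n,1}_D \otimes B^{n+1,1}_D$, while for $D^{(2)}_{n+1}$ one tracks the iterated peeling $\lh^A \circ (\lh^A \circ \lb^A)^{n-1}$ of the single embedded column $B^{n,1}_A$.

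For the ``splitting'' operations (2) and (3) at generic $r$, I would instead exploit that $\lb$ and $\ls$ \emph{are} $U_q(\g_0)$-crystal morphisms, as are $\lb^A, \ls^A, R^A$, while the two occurrences of $\sigma$ in~\eqref{eq:lbv_A} (resp.~\eqref{eq:lsv_A_generic}) again compose to the identity on $B^\bullet$. Thus $\virtual{\lb}\circ\emb = \emb\circ\lb$ and $\virtual{\ls}\circ\emb = \emb\circ\ls$ are identities of $\g_0$-crystal morphisms, so it suffices to check them on the explicitly known $\g_0$-highest weight elements of a single factor $B^{r,s}$. There the verification reduces to the statement that $\lb^A$ splits the factor $B^{r,1}_A$ while $\sigma \circ \lb^A \circ \sigma$ splits its $\sigma$-dual compatibly, and that the interleaved $R^A$-moves regroup the resulting ambient columns into exactly the $\sigma$-paired blocks $B^{1,1}_A \otimes B^{2n-1,1}_A$ and $B^{r-1,1}_A \otimes B^{2n-r+1,1}_A$ cut out by $\emb$; that these blocks again satisfy the pairing characterizing the image of $\emb$ follows from the $\sigma$/$R$ description of that image (Proposition~\ref{prop:virtualization_B_to_D} in the spin case, and its ambient-type-$A$ counterpart).

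The main obstacle is the family of special cases $r = n$, where the embedding is not a simple two-column pairing and the lifted operations are the longer compositions: $\virtual{\lb}$ and $\virtual{\ls}$ for $C^{(1)}_n$ are~\eqref{eq:lbv_A_type_C} and~\eqref{eq:lsv_A_Cn}, which introduce an $\ls^A$ and an $(\ls^A)^{-1}$ to access the factor $B^{n,2s}_A$; for $A^{(2)}_{2n}$ and $A^{(2)\dagger}_{2n}$ the target is $(B^{n,s}_A)^{\otimes 2}$; and for $D^{(2)}_{n+1}$ and $B^{(1)}_n$ the spin node is processed through the doubling map $\emb_{2\times}$ as in~\eqref{eq:spins_Dtwisted}. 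Here I expect the bulk of the work: one must check that the extra $\ls^A/(\ls^A)^{-1}$ (resp.\ the $\emb_{2\times}$-conjugated iteration) exactly undo the auxiliary splitting used to expose the two copies, so that an embedded path is returned. For the $D^{(2)}_{n+1}$ and $B^{(1)}_n$ spin cases I would argue as in the proof of Proposition~\ref{prop:emb_rc}(1'), invoking the established spin-bijection results of~\cite{Scrimshaw17} (and~\cite{S:2005}); the remaining $C^{(1)}_n$, $A^{(2)}_{2n}$ and $A^{(2)\dagger}_{2n}$ computations, while routine, are the only genuinely lengthy verifications.
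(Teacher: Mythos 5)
Your overall strategy---proving the stronger identities $\virtual{\lx}\circ\emb=\emb\circ\lx$ directly on the path side and reading off the inclusions---is legitimate in principle, and your treatment of (1) and (1') agrees with the paper's (these really are immediate from $\emb$ being component-wise). However, the proposal is built on a misdiagnosis that then leaves the hard cases unproved. The paper's argument for (2) and (3) is \emph{not} circular: it never uses $\Phi^X\circ\emb=\emb\circ\Phi$ for the yet-to-be-constructed $\Phi$. It uses only the \emph{ambient} bijection $\Phi^X$ of types $A_{2n-1}^{(1)}$ and $D_{n+1}^{(1)}$, already established in~\cite{KSS:2002, OSSS16}, together with its known intertwining of $\lh^X,\lb^X,\ls^X,R^X$ with $\delta^X,\beta^X,\gamma^X,\id$, of $\lb^{(s)D}$ with $\beta^{(s)D}$ (Proposition~\ref{prop:general_lb_beta}), and of $\sigma$ with $\varsigma$ (Proposition~\ref{prop:involution}). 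After the same reduction you make (to classical highest weight elements of a single leftmost factor, using that $\virtual{\lb}$ is a composition of strict $U_q(\g_0)$-morphisms), the paper pushes $\virtual{p}\in\emb\bigl(\p(B^{r,1})\bigr)$ across $\Phi^X$ to the rigged-configuration side, where Proposition~\ref{prop:emb_rc} is already proved, and pulls back by induction on $r$ (resp.\ $s$). Nothing about the new bijection enters, so there was no reason to renounce this tool.

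Because you do renounce it, everything you label ``routine but lengthy'' is exactly where the content lies, and two of those points are genuine gaps rather than routine. First, in the $r=n$ case for $C_n^{(1)}$ the last step of $\virtual{\ls}$ in~\eqref{eq:lsv_A_Cn} is $(\ls^A)^{-1}$, and one must prove that the element it is applied to lies in the image of $\ls^A$ (not every element of $B_A^{n,1}\ot B_A^{n,1}$ does); the paper justifies this precisely by tracking that the corresponding rigged configuration is unchanged through the preceding steps, an argument living on the RC side that your setup forbids. You would need a genuinely new path-side proof here. Second, your endgame for (2)--(3) is that the regrouped ambient blocks ``satisfy the pairing characterizing the image of $\emb$,'' but the paper only records such a $\sigma$/$R$-type characterization for the $B_n^{(1)}$ spin columns (Proposition~\ref{prop:virtualization_B_to_D}) and, in a remark, for types $D_{n+1}^{(2)}$ and $A_{2n-1}^{(2)}$ via~\cite{OSS03II}; for $C_n^{(1)}$, $A_{2n}^{(2)}$ and $A_{2n}^{(2)\dagger}$ no characterization of $\emb(B^{r,1})\subset B_A^{r,1}\ot B_A^{2n-r,1}$ is set up, so you would have to establish one or verify membership in the image of $\emb$ element by element on each classical highest weight vector. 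Until these two points are supplied, cases (2) and (3) for the ambient-type-$A$ families remain open in your write-up.
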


\begin{proof}
The proofs of~(1) and~(1') follow immediately from the fact $\emb$ is defined component-wise.
Next, we show~(2) by induction on $r$, where the base case $r = 1$ is trivial.
Recall that $\Phi^X$ intertwines $\lh^X$, $\lb^X$, $\ls^X$ and $R^X$ with $\delta^X$, $\beta^X$, $\gamma^X$ and 
$\id^X$ respectively for ambient type $A$~\cite{KSS:2002} and ambient type $D$~\cite{OSSS16},
as well as $\lb^{(s)D}$ with $\beta^{(s)D}$ by Proposition~\ref{prop:general_lb_beta}.
Moreover, recall that $\Phi^X$ intertwines $\sigma$ and $\varsigma$ by Proposition~\ref{prop:involution}.
Hence, the bijection $\Phi^X$ intertwines $\virtual{\lb}$ with $\virtual{\beta}$.
From Proposition~\ref{prop:emb_rc}, the map $\virtual{\beta}$ is well-defined and, thus, so is the map $\virtual{\lb}$.
Since $\virtual{\lb}$ only affects the leftmost factor(s) and is a composition of strict $U_q(\g_0)$-crystal embeddings, 
it is sufficient to consider $\virtual{p} \in \emb\bigl(\p(B^{r,1})\bigr)$.
We note that $\emb(B^{r,1})$ is multiplicity free as a $U_q(\g_0)$-crystal, and therefore 
$(\nu^X, J^X) = \Phi(\virtual{p}) \in \emb\bigl(\rc(L(B^{r,1}))\bigr)$.
Next, we note that $\virtual{\beta}(\nu^X, J^X) \in \emb\bigl(\rc(L(B^{1,1} \otimes B^{r-1,1}))\bigr)$ by 
Proposition~\ref{prop:emb_rc}.
Therefore, we have $\virtual{\lb}(\virtual{p}) \in \emb\bigl(\p(B^{1,1} \otimes B^{r-1,1})\bigr)$ by induction.

The proof of~(3) is similar to~(2) by using induction on $s$.
Additionally note that $(\ls^A)^{-1}$ in~\eqref{eq:lsv_A_Cn} can indeed be applied because of the previous 
two applications of $\ls^A$ and the corresponding rigged configuration is never changed.
\end{proof}

\begin{lemma}
\label{lemma:virtual_lx}
We have
\[
\virtual{\lx} \circ \emb = \emb \circ \lx
\]
for $\lx = \lh, \lhs, \lb, \ls$.
\end{lemma}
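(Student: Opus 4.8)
The plan is to establish the intertwining relation $\virtual{\lx} \circ \emb = \emb \circ \lx$ separately for each operation $\lx \in \{\lh, \lhs, \lb, \ls\}$ by transporting the corresponding statement on the rigged configuration side through the bijection $\Phi$. The key observation is that on paths the lifted operations $\virtual{\lx}$ were \emph{defined} (in Section~\ref{subsection.lifted paths}) as compositions of the ambient-type maps $\lh^X, \lb^X, \ls^X, R^X$ and $\sigma$, while the analogous lifted operations on rigged configurations $\virtual{\delta}, \deltasv, \virtual{\beta}, \virtual{\gamma}$ (Section~\ref{subsection.lifted RC}) were defined by exactly parallel compositions of $\delta^X, \beta^X, \gamma^X, \id^X$ and $\varsigma$. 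Thus the whole argument reduces to matching these two families of compositions through the already-established ambient-type bijection $\Phi^X$.

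First I would record the two ingredients already available in the excerpt. On one hand, $\Phi^X$ intertwines the ambient path operations with the ambient rigged configuration operations: $\lh^X \leftrightarrow \delta^X$, $\lb^X \leftrightarrow \beta^X$, $\ls^X \leftrightarrow \gamma^X$, $R^X \leftrightarrow \id^X$ (by~\cite{KSS:2002} for ambient type $A$ and~\cite{OSSS16} for ambient type $D$), and $\sigma \leftrightarrow \varsigma$ by Proposition~\ref{prop:involution}. On the other hand, Proposition~\ref{prop:emb_rc} and Proposition~\ref{prop:emb path} guarantee that every lifted operation is \emph{well-defined} on the image $\emb(\rc(L(\cdot)))$ (resp.\ $\emb(\p(\cdot))$), so that the compositions make sense and preserve the virtual image. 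Combining these, the bijection $\Phi^X$ intertwines each lifted path operation $\virtual{\lx}$ with the corresponding lifted rigged configuration operation, exactly as was shown for $\virtual{\lb}$ in the proof of Proposition~\ref{prop:emb path}(2). The desired identity $\virtual{\lx} \circ \emb = \emb \circ \lx$ then follows because $\emb$ on paths and $\emb$ on rigged configurations are compatible under $\Phi^X$ and $\Phi$, and because the base cases $\virtual{\delta} \circ \emb = \emb \circ \delta$ (resp.\ $\deltasv \circ \emb = \emb \circ \deltas$) are supplied by Lemmas~\ref{lemma:virtual_delta_A} and~\ref{lemma:virtual_delta_D} together with Proposition~\ref{prop:emb_rc}(1').

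Concretely, for $\lx = \lh$ the claim for ambient type $A$ is Lemma~\ref{lemma:virtual_delta_A} read on the path side; for ambient type $D$ one unwinds the definition~\eqref{eq:lhv_D} and applies Lemma~\ref{lemma:virtual_delta_D}. For $\lx = \lhs$ one uses the composition definitions~\eqref{eq:spins_Dtwisted} (ambient type $A$) and the two-fold composition $\lhs^D \circ \lhs^D$ (ambient type $D$), invoking Proposition~\ref{prop:emb_rc}(1') so that each intermediate application stays inside the virtual image. For $\lx = \lb$ and $\lx = \ls$, I would run the same induction already carried out in Proposition~\ref{prop:emb path} (on $r$ and on $s$ respectively): each step rewrites $\virtual{\lb}$ (resp.\ $\virtual{\ls}$) using its defining composition~\eqref{eq:lbv_A}–\eqref{eq:lsv_A} (resp.\ the ambient-type-$D$ analogues), pushes $\emb$ through one ambient map at a time using the $\Phi^X$-intertwining, and reassembles.

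The main obstacle I expect is purely bookkeeping: the lifted maps for $r = n$ in types $C_n^{(1)}$ and $D_{n+1}^{(2)}$ (the spin-column cases, handled by $\lhsv$ and by the special compositions~\eqref{eq:lbv_A_type_C} and~\eqref{eq:lsv_A_Cn}) break the uniform pattern, and for ambient type $D$ the node-$n$ case of $A_{2n-1}^{(2)}$ requires passing through the crystal isomorphism of~\cite[Thm.~3.3]{S:2005} between $B_D^{n,1} \otimes B_D^{n+1,1}$ and height-$n$ tableaux. In each such case the relevant ambient-type statement has already been arranged in the definitions to mirror exactly the rigged configuration side, so the verification is a case-by-case check that the insertion of $R^X$, $\sigma$, and the doubling maps $\emb_{2\times}$ matches the corresponding insertion of $\id^X$, $\varsigma$, and $\emb_{2\times}$ on rigged configurations—using $[\emb_{2\times}, \theta] = 0$ and the commutation relations from Proposition~\ref{prop:betas_properties} where doubling is involved. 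Since no genuinely new phenomenon arises beyond what Propositions~\ref{prop:emb_rc} and~\ref{prop:emb path} already encode, the proof is essentially an assembly of those results one operation at a time.
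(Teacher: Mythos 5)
There is a genuine circularity in your argument. You propose to prove $\virtual{\lx} \circ \emb = \emb \circ \lx$ on \emph{paths} by ``transporting the corresponding statement on the rigged configuration side through the bijection $\Phi$,'' and later you invoke the compatibility of ``$\emb$ on paths and $\emb$ on rigged configurations under $\Phi^X$ and $\Phi$.'' But the type-$\g$ bijection $\Phi$ does not exist at this point: it is constructed in Theorem~\ref{theorem.main}, and Lemma~\ref{lemma:virtual_lx} is precisely the ingredient used there to show that the left face of the cube~\eqref{proof of main th} commutes. For the same reason, Lemma~\ref{lemma:virtual_delta_A} cannot simply be ``read on the path side'': it is a statement about $\delta$ on rigged configurations, and translating it to a statement about $\lh$ on paths requires the very bijection you are trying to build. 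If you try to close the argument using only the ambient bijection $\Phi^X$ (which does exist), you find you need to compare $\Phi^X \circ \emb$ before and after applying $\lx$, and that comparison is exactly the content of the top and bottom faces of the cube, i.e., the definition of $\Phi$.

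The paper's proof is much shorter and stays entirely on the path side: $\emb$ is applied component-wise on tensor factors (Proposition~\ref{prop:virtual_tensor_category}), each operation $\lx$ only modifies the leftmost factor(s) and is a strict $U_q(\g_0)$-crystal morphism, and $B^{r,s}$ (hence its image $\emb(B^{r,s})$) is multiplicity free as a $U_q(\g_0)$-crystal, so the two compositions, having the same source and target and agreeing on classical highest weight elements by construction of the lifted operations, must coincide. Your instinct that the definitions of $\virtual{\lx}$ and $\virtual{\xi}$ were designed to mirror each other is correct, and that parallelism is indeed exploited in the paper --- but only later, in the back face of the cube in Theorem~\ref{theorem.main}, after this lemma has been established independently. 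To repair your proof, replace the transport through $\Phi$ with the direct multiplicity-freeness argument.
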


\begin{proof}
Since $\emb$ is applied component-wise and $B^{r,s}$ is multiplicity free as a $U_q(\g_0)$-crystal, we have $\virtual{\lx} \circ \emb = \emb \circ \lx$.
\end{proof}

\begin{lemma}
\label{lemma:virtual_xi}
We have
\[
\virtual{\xi} \circ \emb = \emb \circ \xi
\]
for $\xi = \delta, \deltas, \beta, \gamma$.
\end{lemma}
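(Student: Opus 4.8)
The plan is to treat each operation $\xi \in \{\delta, \deltas, \beta, \gamma\}$ separately, using that $\emb$ acts componentwise on rigged partitions as recorded in~\eqref{eq:virtual_RC} and~\eqref{eq:virtual_RC_A2}. The case $\xi = \delta$ needs no new work: it is exactly the content of Lemma~\ref{lemma:virtual_delta_A} (ambient type $A$) and Lemma~\ref{lemma:virtual_delta_D} (ambient type $D$). The case $\xi = \gamma$ is essentially formal. Both $\gamma$ and every factor of $\virtual{\gamma}$ in~\eqref{eq:gammav_A} and~\eqref{eq:gammav_D} act as the identity on the underlying rigged partitions, altering only the multiplicity array (hence the vacancy numbers). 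So $\virtual{\gamma} \circ \emb$ and $\emb \circ \gamma$ agree as maps on rigged partitions, and all that remains is to check that the source and target multiplicity arrays correspond under $\emb$, which is precisely Proposition~\ref{prop:emb_rc}(3).

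For $\xi = \beta$, I would verify directly that $\virtual{\beta} \circ \emb$ and $\emb \circ \beta$ add the same strings. Since $\beta$ adds a length-$1$ singular string to $\nu^{(a)}$ for every $a < r$, the map $\emb \circ \beta$ adds, by~\eqref{eq:virtual_m}, a string of length $\gamma_a$ to $\virtual{\nu}^{(b)}$ for each $b \in \phi^{-1}(a)$ with $a < r$; this string is singular because $\emb$ scales both riggings and vacancy numbers by $\gamma_a$, by~\eqref{eq:virtual_J} and~\eqref{eq:virtual_vacancy}. On the other side I would trace the definition of $\virtual{\beta}$. In ambient type $A$, the conjugation pattern $\varsigma \circ \beta^A \circ \varsigma \circ \beta^A$ of~\eqref{eq:betav_A} adds length-$1$ singular strings to the partitions indexed by $\{1, \dotsc, r-1\} \cup \{2n-1, \dotsc, 2n-r+1\} = \bigcup_{a<r} \phi^{-1}(a)$ (the leading $\gamma^A$ in the type $C_n^{(1)}$, $r=n$ case merely adjusts vacancy numbers), matching $\emb \circ \beta$ since $\gamma_a = 1$ for $a<r$ here. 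In ambient type $D$, definition~\eqref{eq:betav_D} uses $\beta^{(2)D}$ for $B_n^{(1)}$ with $1<r<n$, which by Definition~\ref{defn:bs} adds length-$2$ singular strings, matching $\emb \circ \beta$ since $\gamma_a = 2$ for $a < n$ in type $B_n^{(1)}$; the subcase $\beta^D$ for $A_{2n-1}^{(2)}$ (where $\gamma_a = 1$) is immediate.

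The case $\xi = \deltas$, occurring only for types $D_{n+1}^{(2)}$ and $B_n^{(1)}$, is the main obstacle. Here $\deltas$ is defined in~\eqref{eq:spins_Dtwisted} as a conjugate by the doubling/similarity map $\emb_{2\times}$ of a composition of the already-handled maps $\delta$ and $\beta$, whereas $\deltasv$ is defined in~\eqref{eq:deltavs_A} and~\eqref{eq:deltav_D} directly in terms of ambient operations. The strategy is to push $\emb$ through the defining composition of $\deltas$ using the $\delta$ and $\beta$ cases already in hand; the delicate step is commuting $\emb$ past $\emb_{2\times}$. For type $D_{n+1}^{(2)}$ I would first establish a compatibility $\emb \circ \emb_{2\times} = \emb_{2\times}^X \circ \emb$ with an ambient doubling $\emb_{2\times}^X$ (both sides double every row length and copy across folded nodes, since the virtualization has all scaling factors equal to $1$). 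This reduces the claim to an identity purely in ambient type $A$ relating $\deltasv$ to the doubling-conjugate of $\virtual{\delta} \circ (\virtual{\delta} \circ \virtual{\beta})^{n-1}$, which is the content underlying Proposition~\ref{prop:emb_rc}(1') and~\cite[Lemma~3.9, Prop.~8.3]{Scrimshaw17}.

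For type $B_n^{(1)}$ the cleaner route uses $\deltasv = \deltas^D \circ \deltas^D$: the virtualization $\emb$ sends the single spin node of $B_n^{(1)}$ to the two spin nodes of $D_{n+1}^{(1)}$, so removing the $B_n$-spin corresponds to removing both $D$-spins, one after the other. The compatibility of the $\emb_{2\times}$-based definition of $\deltas$ with this direct picture is supplied by~\cite[Thm.~6.1]{Scrimshaw17}. Thus the essential difficulty throughout the $\deltas$ case is reconciling the two distinct virtualizations in play, namely the main virtualization $\emb$ into the ambient type and the auxiliary doubling map $\emb_{2\times}$ appearing in the definition of the spin operation; once their interaction is pinned down, the identity follows by composing the $\delta$ and $\beta$ cases established above.
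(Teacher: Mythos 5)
Your proposal is correct and follows essentially the same route as the paper: the $\xi=\delta$ case is delegated to Lemmas~\ref{lemma:virtual_delta_A} and~\ref{lemma:virtual_delta_D}, the $\beta$ and $\gamma$ cases are direct computations with vacancy numbers and string additions (you simply spell out what the paper calls ``straightforward''), and the $\deltas$ case ultimately rests on the same external results \cite[Prop.~8.3, Thm.~6.1]{Scrimshaw17} that the paper cites. Your extra remarks on reconciling $\emb$ with the doubling map $\emb_{2\times}$ are a reasonable elaboration but do not change the underlying argument.
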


\begin{proof}
For $\xi = \delta$, the claim was shown in Lemma~\ref{lemma:virtual_delta_A} (resp.~Lemma~\ref{lemma:virtual_delta_D}) 
for ambient type $A$ (resp.~ambient type $D$).
For $\xi = \beta$, this is a straightforward computation on the change in vacancy numbers and that $\beta$ and 
$\virtual{\beta}$ do not change the vacancy numbers.
For $\xi = \gamma$, this is a straightforward computation on the vacancy numbers.
For $\xi = \deltas$, the statement for type $D_{n+1}^{(2)}$ (resp.~type $B_n^{(1)}$) was shown 
in~\cite[Prop.~8.3]{Scrimshaw17} (resp.~\cite[Thm.~6.1]{Scrimshaw17}). 
\end{proof}

\subsection{Main theorem}

We now give our main results.
We will use diagrams of the following kind:
\begin{equation*}
\xymatrix{
 {\bullet} \ar[rrr]^{A} \ar[ddd]_{C} \ar[dr] & & &
        {\bullet} \ar[ddd]^{B} \ar[dl] \\
 & {\bullet} \ar[r] \ar[d] & {\bullet} \ar[d] & \\
 & {\bullet} \ar[r]   & {\bullet}  & \\
 {\bullet} \ar[rrr]_{D} \ar[ur] & & & {\bullet} \ar[ul]_{\iota}
}
\end{equation*}
We regard this diagram as a cube with front face given by the large square.
Suppose that the square diagrams given by the faces of the cube except for the front face commute and $\iota$ 
is the injective map. Then the front face also commutes since we have
\[
\iota \circ B \circ A = \iota \circ D \circ C
\]
by diagram chasing~\cite[Lemma~5.3]{KSS:2002}.

\begin{theorem}
\label{theorem.main}
Let $B$ be a tensor product of KR crystals with multiplicity array $L$.
Then there exists a unique family of bijections $\Phi \colon \p(B,\la)\rightarrow\rc(L,\la)$ such that
\[
\Phi^X \circ \emb = \emb \circ \Phi,
\]
and the empty path maps to the empty rigged configuration.
It satisfies the following commutative diagrams.

\begin{enumerate}
\item[(1)]
Suppose $B = B^{1,1} \otimes B'$. Let $\lh(B) = B'$ with multiplicity array $\lh(L)$. Then the diagram
\[
\xymatrix{
\p(B,\lambda) \ar[r]^{\Phi} \ar[d]_{\lh} & \rc(L,\lambda) \ar[d]^{\delta} \\
\bigcup_{\mu}\p(\lh(B),\mu) \ar[r]_{\Phi} & \bigcup_{\mu}\rc(\lh(L),\mu)
}
\]
commutes.

\item[(1')]
For type $D_n^{(1)}$, $B_n^{(1)}$ and $D_{n+1}^{(2)}$ when the left-most factor is $B^{n,1}$ (or $B^{n-1,1}$ in type $D_n^{(1)}$), the diagram
\[
\xymatrix{
\p(B,\lambda) \ar[r]^{\Phi} \ar[d]_{\lhs} & \rc(L,\lambda) \ar[d]^{\deltas} \\
\bigcup_{\mu}\p(\lhs(B),\mu) \ar[r]_{\Phi} & \bigcup_{\mu}\rc(\lhs(L),\mu)
}
\]
commutes.

\item[(2)]
Suppose $B=B^{r,1}\otimes B'$ with $2\leqslant r\leqslant n$.
Let $\lb (B)=B^{1,1}\otimes B^{r-1,1}\otimes B'$ with multiplicity array $\lb (L)$.
Then the diagram
\[
\xymatrix{
\p(B,\lambda) \ar[r]^{\Phi} \ar[d]_{\lb} & \rc(L,\lambda) \ar[d]^{\beta} \\
\p(\lb(B),\lambda) \ar[r]_{\Phi} & \rc(\lb(L),\lambda)
}
\]
commutes.

\item[(3)]
Suppose $B=B^{r,s}\otimes B'$ with $s\geqslant 2$.
Let $\ls(B)=B^{r,1}\otimes B^{r,s-1}\otimes B'$ with multiplicity array $\ls(L)$.
Then the diagram
\[
\xymatrix{
\p(B,\lambda) \ar[r]^{\Phi} \ar[d]_{\ls} & \rc(L,\lambda) \ar[d]^{\gamma}\\
\p(\ls(B),\lambda) \ar[r]_{\Phi} & \rc(\ls(L),\lambda)
}
\]
commutes.
\end{enumerate}
\end{theorem}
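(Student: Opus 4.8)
The plan is to \emph{define} $\Phi$ through the ambient bijection and then transport every asserted property along the virtualization map. Concretely I set $\Phi := \emb^{-1} \circ \Phi^X \circ \emb$, where $\Phi^X$ is the established bijection in ambient type $A$ or $D$. For this to be meaningful I must check the forward containment $\Phi^X\bigl(\emb(\p(B))\bigr) \subseteq \emb\bigl(\rc(L)\bigr)$; granting it, the partial inverse $\emb^{-1}$ applies because the virtualization $\emb$ on rigged configurations is injective with image completely characterized by~\eqref{eq:virtual_RC}. Uniqueness is then immediate: since $\emb$ is injective, any family obeying $\Phi^X\circ\emb = \emb\circ\Phi$ must coincide with $\emb^{-1}\circ\Phi^X\circ\emb$, and the empty path is forced to the empty rigged configuration. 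Finally $\Phi$ respects weight spaces, $\Phi\colon\p(B,\la)\to\rc(L,\la)$, because $\emb$ identifies the $\g_0$-weight $\la$ with $\virtual{\la} = \Psi(\la)$ and $\Phi^X$ is weight preserving.

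The forward containment, hence well-definedness, is proven by induction on the number of boxes of $B$, peeling off the leftmost KR factor. The inductive step assembles three facts that are already available: Proposition~\ref{prop:emb path}, that the lifted path maps $\virtual{\lh},\lhsv,\virtual{\lb},\virtual{\ls}$ preserve virtual paths; Proposition~\ref{prop:emb_rc}, that the lifted rigged-configuration maps $\virtual{\delta},\deltasv,\virtual{\beta},\virtual{\gamma}$ preserve virtual rigged configurations; and the fact that $\Phi^X$ intertwines each elementary ambient operation $\lh^X,\lb^X,\ls^X,R^X$ (and the twist $\sigma$, via Proposition~\ref{prop:involution}) with its rigged-configuration counterpart $\delta^X,\beta^X,\gamma^X,\id$ (and $\varsigma$), and therefore intertwines the composite lifted operations $\virtual{\lx}$ with $\virtual{\xi}$. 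For the bijectivity: injectivity of $\Phi$ is inherited from $\Phi^X$, while surjectivity follows because $(\Phi^X)^{-1}$ likewise preserves the virtual subsets---shown by the identical induction applied to the inverse elementary operations (equivalently, each rigged-configuration operation is invertible on its image)---so that $\Phi^X$ restricts to a bijection $\emb(\p(B))\to\emb(\rc(L))$.

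With $\Phi$ constructed, each of the commuting squares~(1),(1'),(2),(3) is extracted from the cube-chasing lemma quoted immediately before the theorem. For~(1), I build the cube whose front face is the square to be proven and whose back face is the ambient square
\[
\xymatrix{
\p\bigl(\emb(B)\bigr) \ar[r]^{\Phi^X} \ar[d]_{\virtual{\lh}} & \rc\bigl(L(\emb(B))\bigr) \ar[d]^{\virtual{\delta}} \\
\p\bigl(\emb(\lh(B))\bigr) \ar[r]_{\Phi^X} & \rc\bigl(L(\emb(\lh(B)))\bigr)
}
\]
the two faces being joined vertically by $\emb$. The back face commutes because $\Phi^X$ intertwines $\virtual{\lh}$ with $\virtual{\delta}$ (previous paragraph); the top and bottom faces commute by the defining relation $\Phi^X\circ\emb = \emb\circ\Phi$; the left face commutes by Lemma~\ref{lemma:virtual_lx} ($\virtual{\lh}\circ\emb = \emb\circ\lh$); and the right face commutes by Lemma~\ref{lemma:virtual_xi} ($\virtual{\delta}\circ\emb = \emb\circ\delta$). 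As $\emb$ is the injective map $\iota$, the lemma forces the front face to commute. Diagrams~(1'),(2),(3) are obtained verbatim after replacing the pair $(\lh,\delta)$ by $(\lhs,\deltas)$, $(\lb,\beta)$ and $(\ls,\gamma)$ together with their lifted versions.

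The main obstacle is the content underlying the second paragraph: confirming that $\Phi^X$ intertwines the \emph{composite} lifted operations $\virtual{\lx}$ with $\virtual{\xi}$ uniformly across all nonexceptional types, tracking the auxiliary $\sigma$, $R^X$ and $\varsigma$ moves and especially the delicate $r=n$ and spin cases for $C_n^{(1)},A_{2n}^{(2)},A_{2n}^{(2)\dagger},D_{n+1}^{(2)}$ and $B_n^{(1)}$. This rests on Proposition~\ref{prop:involution} and the type $A$/$D$ intertwining results, but must be followed carefully through every definition in Sections~\ref{subsection.lifted paths} and~\ref{subsection.lifted RC}; once that is in place, the cube-lemma assembly of the four diagrams is purely formal.
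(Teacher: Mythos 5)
Your construction is the paper's: define $\Phi$ by conjugating $\Phi^X$ with $\emb$, prove well-definedness by induction peeling off the leftmost factor, and extract the four squares from the cube lemma with the same faces (Lemmas~\ref{lemma:virtual_lx} and~\ref{lemma:virtual_xi} on the sides, the ambient intertwining on the back). However, there is one genuine gap in your inductive step for the forward containment $\Phi^X\bigl(\emb(\p(B))\bigr) \subseteq \emb\bigl(\rc(L)\bigr)$. Unwinding your argument: for $p \in \p(B)$ you set $(\virtual{\nu},\virtual{J}) = \Phi^X(\emb(p))$ and, combining Proposition~\ref{prop:emb path}, Lemma~\ref{lemma:virtual_lx}, the intertwining of $\virtual{\lx}$ with $\virtual{\xi}$ under $\Phi^X$, and the inductive hypothesis, you obtain $\virtual{\xi}(\virtual{\nu},\virtual{J}) = \emb\bigl(\Phi(\lx(p))\bigr) \in \emb\bigl(\rc(L')\bigr)$. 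To conclude you need that $(\virtual{\nu},\virtual{J})$ itself lies in $\emb\bigl(\rc(L)\bigr)$, i.e.\ a statement about the \emph{preimage} of $\emb\bigl(\rc(L')\bigr)$ under $\virtual{\xi}$. Proposition~\ref{prop:emb_rc}, which you cite for this, gives only the forward inclusion $\virtual{\xi}\bigl(\emb(\rc(L))\bigr) \subseteq \emb\bigl(\rc(L')\bigr)$; since $\virtual{\xi}$ is neither injective nor is the characterization~\eqref{eq:virtual_RC} of $\emb\bigl(\rc(L)\bigr)$ automatically pulled back along it, the three facts you assemble do not close the induction.

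The paper fills exactly this hole with a separate appeal to ``the explicit algorithm of $\virtual{\xi}$'': one inspects the box-removal/box-addition procedure defining $\virtual{\delta}$, $\deltasv$, $\virtual{\beta}$, $\virtual{\gamma}$ to see that an ambient rigged configuration in $\rc(\virtual{L})$ whose image under $\virtual{\xi}$ satisfies the symmetry and scaling conditions~\eqref{eq:virtual_RC}--\eqref{eq:virtual_RC_A2} must already satisfy them itself. Your write-up needs this (or an equivalent converse to Proposition~\ref{prop:emb_rc} along the fibers of $\virtual{\xi}$) as an explicit additional ingredient; the mirror-image of the same issue arises in your surjectivity argument, where you run the induction for $(\Phi^X)^{-1}$ and again only the forward inclusions of Propositions~\ref{prop:emb path} and~\ref{prop:emb_rc} are available. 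Everything else --- uniqueness from injectivity of $\emb$, weight preservation, and the cube-chasing derivation of diagrams (1), (1'), (2), (3) --- matches the paper's proof and is correct as stated.
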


\begin{proof}
For type $A_n^{(1)}$ (resp.~$D_n^{(1)}$), the claim was shown in~\cite{KSS:2002} 
(resp.~\cite{OSSS16}), where we consider $\emb$ as the identity map.
The other cases are shown by embedding both $\p(B)$ and $\rc(L)$ into those of type $A_{2n-1}^{(1)}$ (resp.~$D_{n+1}^{(1)}$) for ambient type $A$ (resp.~$D$).

Consider the following diagram:
\begin{equation} \label{proof of main th}
\begin{gathered}  
\xymatrixcolsep{4em}
\xymatrix{
 {\p(B)}\; \ar@{.>}[rrr]^{\Phi} \ar[ddd]_{\lx} \ar@{^{(}->}[dr]_{\emb} & & &
        {\rc(L)} \ar[ddd]^{\xi} \ar@{^{(}->}[dl]^{\emb} \\
 & {\vp(\virtual{B})} \ar[r]^{\Phi^X} \ar[d]_{\virtual{\lx}} & {\vrc(\virtual{L})} \ar[d]^{\virtual{\xi}} & \\
 & {\vp(\virtual{B}')} \ar[r]_{\Phi^X}   & {\vrc(\virtual{L}')}  & \\
 {\p(B')} \ar[rrr]_{\Phi} \ar@{^{(}->}[ur]^{\emb} & & & {\;\rc(L')} \ar@{^{(}->}[ul]_{\emb}
}
\end{gathered}
\end{equation}
where $\lx = \lh, \lhs, \lb, \ls$ and $\xi = \delta, \deltas, \beta, \gamma$ respectively,
$B' = \lx(B)$ and $L' = \lx(L)$.
Thanks to Proposition~\ref{prop:emb path}, the left face is well-defined, and it commutes by Lemma~\ref{lemma:virtual_lx}.
The right face is also well-defined due to Proposition~\ref{prop:emb_rc} and commutes by Lemma~\ref{lemma:virtual_xi}.
The back face commutes since $\virtual{\lx}$ and $\virtual{\xi}$ are compositions of maps, as defined in
Section~\ref{subsection.lifted paths}, intertwining with $\Phi^X$ by~\cite{KSS:2002, OSSS16};
Proposition~\ref{prop:general_lb_beta}; and Proposition~\ref{prop:involution}.

We first show that there exists an injective map $\Phi$ from $\p(B)$ to $\rc(L)$. We use
an induction on $B$ such that the application of any $\lx$ on $B$
decreases its order. Suppose in the bottom of~\eqref{proof of main th} that there exists
an injective map from $\p(B')$ to $\rc(L')$ and take $p\in\p(B)$ such that 
$\lx(p)\in\p(B')$. Let $(\nu',J')=\Phi\bigl(\lx(p)\bigr)$. 
Then from the commutativity of the left, right and back faces of diagram~\eqref{proof of main th} and the bijectivity of $\Phi^X$, there must exist
$(\virtual{\nu},\virtual{J})$ such that $\virtual{\xi}(\virtual{\nu},\virtual{J}) = \emb(\nu',J')$.
However, from the explicit algorithm of $\virtual{\xi}$, the rigged configuration
$(\virtual{\nu},\virtual{J})$ belongs to $\emb\bigl(\rc(L)\bigr)$.
Hence, to $p\in\p(B)$ one can associate  $(\nu,J)\in\rc(L)$ such that $\emb(\nu,J) = (\virtual{\nu},\virtual{J})$.
One can thus define  $\Phi(p) = (\nu,J)$.

To show that $\Phi$ is a bijection, we make \eqref{proof of main th} left and right reversed
and replace $\Phi^X$ with $(\Phi^X)^{-1}$.
Moreover, the top and bottom faces commute by the definition of $\Phi$.
Hence, we have Theorem~\ref{theorem.main}.
\end{proof}

\begin{remark}
Proposition~\ref{prop:involution}, Proposition~\ref{prop:virtualization_B_to_D} and~\cite{KSS:2002, OSSS16} imply Theorem~\ref{theorem.main} for types 
$D_{n+1}^{(2)}$ and $A_{2n-1}^{(2)}$ since $\emb\bigl(\p(B)\bigr)$ (resp.~$\emb\bigl(\rc(L)\bigr)$) is characterized by being 
invariant under $\sigma$ (resp.~$\varsigma$)~\cite{OSS03II}.
\end{remark}

\section{Properties of $\Phi$ and $X=M$}
\label{sec:properties}

In this section, we show that the intrinsic energy function and cocharge are related by the bijection $\Phi$,
which results in a (combinatorial) proof of the $X=M$ conjecture~\cite{HKOTY99, HKOTT02} for all nonexceptional types.

\begin{proposition}
\label{prop:virtual_box}
Let $\g$ be of nonexceptional type with \emph{any} embedding $\emb$ from $\virtual{\g}$.

\begin{enumerate}
\item[(1)]
For $\Box \in \{ \lusz, \vee, \hwstar\}$, the diagram
\[
\xymatrix{
\p(B,\lambda) \ar[r]^{\emb} \ar[d]_{\Box} & \vp(\virtual{B},\virtual{\lambda}) \ar[d]^{\Box}\\
\p(B^{\Box},\lambda) \ar[r]_{\emb} & \vp(\virtual{B}^{\Box},\virtual{\lambda})
}
\]
commutes.

\item[(2)]
The diagram
\[
\xymatrix{
\rc(L,\lambda) \ar[r]^{\emb} \ar[d]_{\theta} & \vrc(\virtual{L},\virtual{\lambda}) \ar[d]^{\theta}\\
\rc(L,\lambda) \ar[r]_{\emb} & \vrc(\virtual{L},\virtual{\lambda})
}
\]
commutes.
\end{enumerate}
\end{proposition}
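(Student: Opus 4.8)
The plan is to prove the two parts by different means: part~(2) by a direct computation with the explicit virtualization formulas for rigged configurations, and part~(1) by reducing to a single Kirillov--Reshetikhin factor and then invoking connectedness. For part~(2) I would argue stringwise. By~\eqref{eq:virtual_RC} the map $\emb$ sends a string $(i,x)$ of $(\nu,J)^{(a)}$ to the string $(\gamma_a i,\gamma_a x)$ of $(\virtual{\nu},\virtual{J})^{(b)}$ for each $b\in\phi^{-1}(a)$, the sole exception being $a=n$ in types $A_{2n}^{(2)}$ and $A_{2n}^{(2)\dagger}$, where by~\eqref{eq:virtual_RC_A2} the length is unchanged and the rigging is doubled. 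By~\eqref{eq:virtual_vacancy} the vacancy number attached to the image string scales by exactly the same factor, $\gamma_a$ (resp.~$2$). Consequently both $\theta\circ\emb$ and $\emb\circ\theta$ send $(i,x)$ to the string whose rigging is $\gamma_a\bigl(p_i^{(a)}-x\bigr)$ (resp.~$2\bigl(p_i^{(n)}-x\bigr)$), so the square commutes. The computation is insensitive to the half-integer riggings on odd rows of $\nu^{(n)}$ for type $A_{2n}^{(2)\dagger}$, since doubling sends them to integers while preserving complementation.

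For part~(1) I would first reduce to a single tensor factor. As $\emb$ is a tensor functor (Proposition~\ref{prop:virtual_tensor_category}) acting componentwise, while each $\Box\in\{\lusz,\vee\}$ acts componentwise and reverses the order of the factors by~\eqref{eq:box_factors}, the identity $\emb\circ\Box=\Box\circ\emb$ on a tensor product reduces to the same identity on every single factor $B^{r,s}$. On such a factor I would exploit that $B^{r,s}$ is connected with a unique maximal element $u$ and that $\emb$ is the unique $U_q'(\g)$-crystal isomorphism onto $\virtual{B}^{r,s}$ carrying $u$ to the maximal element of $\virtual{B}^{r,s}$. The two composites $\emb\circ\Box$ and $\Box\circ\emb$ both intertwine the type-$\g$ operators $e_a,f_a$ with the ambient virtual operators in the same twisted way: for $\vee$ one gets $e_a\mapsto\virtual{f}_a$ and $f_a\mapsto\virtual{e}_a$, using that $\vee$ exchanges $e^X_b\leftrightarrow f^X_b$ and hence $\virtual{e}_a\leftrightarrow\virtual{f}_a$; for $\lusz$ one gets $e_a\mapsto\virtual{f}_{\tau(a)}$ and $f_a\mapsto\virtual{e}_{\tau(a)}$, which additionally needs the compatibility $\phi\circ\tau^X=\tau\circ\phi$ of the classical diagram automorphisms with the folding. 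I would read this compatibility off the foldings in Figures~\ref{fig:diagram_foldings_A} and~\ref{fig:diagram_foldings_D}: in ambient type $A$, $\tau^X$ is the flip $b\mapsto 2n-b$ while $\tau=\id$ on $\g_0$, and in ambient type $D$, $\tau^X$ fixes or interchanges the two spin nodes, both folding to the node $n$. Since two crystal morphisms out of the connected crystal $B^{r,s}$ that intertwine the operators identically and agree on $u$ must coincide, and the value on $u$ is immediate because $u$ and its $\Box$-images are extremal weight vectors, the single-factor identity follows; this simultaneously identifies $(\virtual{B})^\Box$ with the virtualization of $B^\Box$ and makes the lower and right edges of the square well-defined.

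For $\Box=\hwstar=\mathbin{\uparrow}\circ\lusz$ I would combine the $\lusz$ case with the fact that $\emb$ commutes with $\mathbin{\uparrow}$. The latter holds because $\emb$ is a $U_q(\g_0)$-crystal embedding that preserves the property of being highest weight for the classical index set: by~\eqref{eq:virtual_crystal} we have $\varepsilon_a=\gamma_a^{-1}\varepsilon^X_b$, so $\varepsilon_a$ vanishes precisely when the ambient $\varepsilon^X_b$ does, whence $\emb$ carries the $\g_0$-highest weight vector of a classical component to the highest weight vector of its image for the ambient classical index set $I^X\setminus\{0\}$. Therefore $\emb\circ\mathbin{\uparrow}=\mathbin{\uparrow}^X\circ\emb$, and composing this with the $\lusz$ case yields $\emb\circ\hwstar=\hwstar\circ\emb$.

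The step I expect to be the main obstacle is the $\lusz$ case of part~(1): one must check carefully that the ambient Lusztig involution restricts to $\virtual{B}$ as the virtualization of $\lusz$, which rests on the diagram-automorphism compatibility above together with correct bookkeeping of the $w_0$-action on weights through the embedding $\Psi\colon P\to P^X$. Once this is established, the $\vee$ and $\hwstar$ cases, as well as all of part~(2), are routine.
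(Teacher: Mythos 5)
Your proposal is correct and follows essentially the same route as the paper: part (2) via the scaling of riggings and vacancy numbers in \eqref{eq:virtual_RC} and \eqref{eq:virtual_vacancy}, and part (1) by reducing to a generating element via \eqref{eq:box_factors} and Proposition~\ref{prop:virtual_tensor_category} and checking the twisted intertwining of crystal operators using the compatibility of $\tau$ with the folding $\phi$. The only cosmetic differences are that the paper generates from the classical highest weight element of each $B(\lambda)$ rather than the affine maximal element of $B^{r,s}$, and handles $\hwstar$ by citing \cite[Thm.~5.1]{Okado.2013} where you give a direct (and valid) argument that $\emb$ commutes with $\mathbin{\uparrow}$.
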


\begin{proof}
We first show~(1) for $\Box = \lusz$.
Let $\overline{u}_{\lambda}$ denote the lowest weight vector in $B(\lambda)$.
We have
\[
\bigl( \emb(u_{\lambda}) \bigr)^{\lusz}  = u_{\virtual{\lambda}}^{\lusz} = \overline{u}_{\virtual{\lambda}} = \emb(\overline{u}_{\lambda}) = \emb(u_{\lambda}^{\lusz}).
\]
We also have
\begin{align*}
\bigl(\emb(e_i b) \bigr)^{\lusz} & = \bigl( \virtual{e}_i \emb(b) \bigr)^{\lusz} = \left(\prod_{j \in \phi^{-1}(i)} (e^X_j)^{\gamma_i} \emb(b) \right)^{\lusz}
\\ & = \prod_{j \in \phi^{-1}(i)} (f^X_{\tau(j)})^{\gamma_i} \bigl(\emb(b)\bigr)^{\lusz} = \virtual{f}_i \bigl(\emb(b)\bigr)^{\lusz},
\end{align*}
since the orbits of $\tau$ are contained in the orbits of $\phi$, and similarly interchanging $e_i$ and $f_i$.
Recall that $u_{\lambda}$ generates $B(\lambda)$, thus the claim follows from~\eqref{eq:box_factors} and Proposition~\ref{prop:virtual_tensor_category}.
The proof for $\Box = \vee$ is similar.
The proof for $\Box = \hwstar$ follows from~\cite[Thm.~5.1]{Okado.2013} and the $\Box = \lusz$ case.

For~(2), the claim follows from~\eqref{eq:virtual_vacancy}.
\end{proof}

\begin{proposition}
\label{prop:diamond_theta}
Let $\g$ be of nonexceptional type.
The diagram
\[
\xymatrix{
\p(B,\lambda) \ar[r]^{\Phi} \ar[d]_{\hwstar} & \rc(L,\lambda) \ar[d]^{\theta}\\
\p(B,\lambda) \ar[r]_{\Phi} & \rc(L,\lambda)
}
\]
commutes.
\end{proposition}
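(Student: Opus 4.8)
The plan is to reduce the statement to the analogous fact in the simply-laced ambient type $\virtual{\g}$, where it is already known, by transporting everything along the virtualization maps. Recall that the maps $\emb \colon \p(B,\lambda) \to \vp(\virtual{B},\virtual{\lambda})$ and $\emb \colon \rc(L,\lambda) \to \vrc(\virtual{L},\virtual{\lambda})$ are injective. Hence it suffices to prove the equality of maps $\p(B,\lambda) \to \vrc(\virtual{L},\virtual{\lambda})$ given by
\[
\emb \circ \Phi \circ \hwstar = \emb \circ \theta \circ \Phi.
\]

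First I would rewrite each side using the commuting squares already at our disposal. For the left-hand side, Theorem~\ref{theorem.main} gives $\emb \circ \Phi = \Phi^X \circ \emb$, and Proposition~\ref{prop:virtual_box}(1) with $\Box = \hwstar$ gives $\emb \circ \hwstar = \hwstar \circ \emb$ on paths; together these yield
\[
\emb \circ \Phi \circ \hwstar = \Phi^X \circ \emb \circ \hwstar = \Phi^X \circ \hwstar \circ \emb.
\]
For the right-hand side, Proposition~\ref{prop:virtual_box}(2) gives $\emb \circ \theta = \theta \circ \emb$ on rigged configurations, and Theorem~\ref{theorem.main} again gives $\emb \circ \Phi = \Phi^X \circ \emb$, so that
\[
\emb \circ \theta \circ \Phi = \theta \circ \emb \circ \Phi = \theta \circ \Phi^X \circ \emb.
\]
Comparing the two chains and using the injectivity of $\emb$, the proposition reduces to the single ambient-type identity $\Phi^X \circ \hwstar = \theta \circ \Phi^X$ on $\vp(\virtual{B},\virtual{\lambda})$.

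The hard part, and indeed the only remaining content, is this base case in the two simply-laced ambient types $A_{2n-1}^{(1)}$ and $D_{n+1}^{(1)}$. For ambient type $A$, the identity $\Phi^X \circ \hwstar = \theta \circ \Phi^X$ is the statement that $\Phi^X$ intertwines $\hwstar = \mathbin{\uparrow} \circ \lusz$ with the complement rigging involution $\theta$, which was established in~\cite{KSS:2002}; for ambient type $D$, the corresponding statement was established in~\cite{OSSS16}. I would invoke these directly to close the argument. I note that it is essential here to use Proposition~\ref{prop:virtual_box}(1) in its $\Box = \hwstar$ form, rather than only the $\Box = \lusz$ case, so that both the Lusztig involution and the highest-weight map $\mathbin{\uparrow}$ comprising $\hwstar$ are simultaneously compatible with $\emb$; this is precisely what allows the whole composite $\hwstar$ to pass through the virtualization in the displayed chain above.
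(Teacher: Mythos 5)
Your proposal is correct and is essentially the paper's own argument: the paper proves this by assembling the same four ingredients (Theorem~\ref{theorem.main} for the top and bottom faces, Proposition~\ref{prop:virtual_box}(1) with $\Box = \hwstar$ for the left face, Proposition~\ref{prop:virtual_box}(2) for the right face, and the known intertwining of $\hwstar$ with $\theta$ under $\Phi^X$ in types $A_n^{(1)}$~\cite{KSS:2002} and $D_n^{(1)}$~\cite[Prop.~4.1(7)]{OSSS16}) into a commuting cube and concluding by injectivity of $\emb$. Your chain of equalities is just the unfolded diagram chase of that cube, so the two proofs coincide.
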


\begin{proof}
Consider the cube:
\begin{equation}
\label{eq:theta_cube}
\begin{gathered}  
\xymatrix{
\p(B,\lambda) \ar[rrr]^{\Phi} \ar[dr]^{\emb} \ar[ddd]_{\hwstar} & & & \rc(L,\lambda) \ar[ddd]^{\theta} \ar[dl]_{\emb} \\
& \vp(\virtual{B}, \virtual{\lambda}) \ar[r]^{\Phi^X} \ar[d]_{\hwstar} & \vrc(\virtual{L}, \virtual{\lambda}) \ar[d]^{\theta} & \\
& \vp(\virtual{B}, \virtual{\lambda}) \ar[r]_{\Phi^X} & \vrc(\virtual{L}, \virtual{\lambda}) & \\
\p(B,\lambda) \ar[rrr]_{\Phi} \ar[ur]_{\emb} & & & \rc(L,\lambda) \ar[ul]^{\emb}
}
\end{gathered}
\end{equation}
The left face commutes by Proposition~\ref{prop:virtual_box}(1).
The right face commutes by Proposition~\ref{prop:virtual_box}(2).
The top and bottom faces commute by Theorem~\ref{theorem.main}.
The back face commutes when $\virtual{\g}$ is of type $A_n^{(1)}$ by~\cite{KSS:2002} and of type $D_n^{(1)}$ by~\cite[Prop.~4.1(7)]{OSSS16}.
Hence, the front face commutes and the claim follows.
\end{proof}

\begin{proposition}
\label{prop:commuting_paths}
When there are at least two KR crystals in the tensor product $B$, the left operation $\mathrm{lx}$ commutes with the right one $\mathrm{ry}$ for any pair of $(\mathrm{x,y})$, where $\mathrm{x,y=h,h_{\mathrm{sp}},b,s}$, as long as they are well-defined.
\end{proposition}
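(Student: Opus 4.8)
The plan is to transport the statement to rigged configurations, where the left operations become the maps $\delta,\deltas,\beta,\gamma$ and the right operations become their ``tilded'' versions, and then to isolate the one case that carries all the content. Note first that, when there are at least two factors, $\mathrm{lx}$ changes only the leftmost factor(s) of $B$ while $\mathrm{ry}$ changes only the rightmost one(s); hence $\mathrm{lx}\circ\mathrm{ry}$ and $\mathrm{ry}\circ\mathrm{lx}$ have the same source $\p(B)$ and the same target, and the commutativity is meaningful. Let $\xi$ (resp.\ $\zeta$) be the rigged-configuration map corresponding to $\mathrm{lx}$ (resp.\ to the left operation $\mathrm{ly}$) under $\Phi$, so $\xi,\zeta\in\{\delta,\deltas,\beta,\gamma\}$. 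Writing $\mathrm{ry}=\hwstar\circ\mathrm{ly}\circ\hwstar$ and using Proposition~\ref{prop:diamond_theta} (which gives $\Phi\circ\hwstar=\theta\circ\Phi$) together with Theorem~\ref{theorem.main} applied after $\hwstar$ has moved the rightmost factor to the left, one computes
\[
\Phi\circ\mathrm{ry}=\theta\circ\Phi\circ\mathrm{ly}\circ\hwstar=\theta\circ\zeta\circ\Phi\circ\hwstar=\theta\circ\zeta\circ\theta\circ\Phi=\widetilde{\zeta}\circ\Phi ,
\]
with $\widetilde{\zeta}\in\{\widetilde{\delta},\deltast,\widetilde{\beta},\widetilde{\gamma}\}$. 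Since $\Phi$ is a bijection (Theorem~\ref{theorem.main}), the desired identity $[\mathrm{lx},\mathrm{ry}]=0$ is equivalent to $[\xi,\widetilde{\zeta}]=0$ on $\rc(L)$.

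Next I would dispose of every case in which $\beta$ or $\gamma$ occurs. The map $\gamma$, and hence $\widetilde{\gamma}=\theta\circ\gamma\circ\theta$, is the identity on rigged configurations, so it commutes with everything. Since $\beta$ is exactly $\beta^{(1)}$, the relations $[\beta,\widetilde{\delta}]=[\beta,\deltast]=[\beta,\widetilde{\beta}]=[\beta,\widetilde{\gamma}]=0$ are the case $s=1$ of Proposition~\ref{prop:betas_properties}; the mixed cases such as $[\delta,\widetilde{\beta}]=0$ follow by conjugating $[\beta,\widetilde{\delta}]=0$ with $\theta$, using $\theta\widetilde{\delta}\theta=\delta$ and $\theta\widetilde{\beta}\theta=\beta$. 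Moreover, in all relevant types $\deltas$ is by construction a composition of $\delta$, $\beta$ and the doubling map $\emb_{2\times}$ (as in~\eqref{eq:spins_Dtwisted} and in the proof of Proposition~\ref{prop:betas_properties}); since $[\emb_{2\times},\theta]=0$ and $\emb_{2\times}$ intertwines $\beta^{(s)}$ with $\beta^{(2s)}$, every case involving $\deltas$ or $\deltast$ cascades down to the $\delta$- and $\beta$-cases. Thus the only genuinely new relation to establish is $[\delta,\widetilde{\delta}]=0$.

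For this core case I would reduce to the ambient simply-laced type via the embedding $\emb$. By Lemma~\ref{lemma:virtual_xi} we have $\emb\circ\delta=\virtual{\delta}\circ\emb$, and by Proposition~\ref{prop:virtual_box}(2) we have $\emb\circ\theta=\theta\circ\emb$, whence $\emb\circ\widetilde{\delta}=(\theta\circ\virtual{\delta}\circ\theta)\circ\emb$. Since $\emb$ is injective, $[\delta,\widetilde{\delta}]=0$ is equivalent to $[\virtual{\delta},\,\theta\circ\virtual{\delta}\circ\theta]=0$ on $\emb(\rc(L))$. Transporting this through $\Phi^X$ (Theorem~\ref{theorem.main} in the ambient type) it becomes the commutativity of the lifted left box-removal $\virtual{\lh}$ with the lifted right box-removal $\hwstar\circ\virtual{\lh}\circ\hwstar$ on ambient paths; because the original tensor product has $N\geqslant2$ factors, these two composites (each built from $\lh^X$, $\sigma$ and $R^X$ by the formulas of Section~\ref{subsection.lifted paths}) act on disjoint left and right blocks of the ambient path. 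The required commutativity then follows from the left–right commutativity of the box operations in types $A_n^{(1)}$ and $D_n^{(1)}$ established in~\cite{KSS:2002} and~\cite{OSSS16}.

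The hard part will be precisely this last reduction: showing that box removal along singular strings ($\delta$) commutes with box removal along cosingular strings ($\widetilde{\delta}$). On the path side this is obscured because $\mathrm{ry}$ is built from the \emph{global} involution $\hwstar$, so it is not a priori a local operation on the right factor; the content of the proposition is exactly that, once $N\geqslant2$, the two ends decouple. I expect the verification to rest on the convexity relation~\eqref{eq:convexity} for vacancy numbers, on the description of $\varsigma\circ\delta^A\circ\varsigma$ as the right-to-left box removal from~\cite{SS:X=M}, and on a careful treatment of the quasi-singular cases (Q), (S), (QS) of~\cite{OSS:2003a} that arise inside $\delta$; all of these are the ingredients already assembled in the ambient simply-laced analysis of~\cite{KSS:2002,OSSS16}, so the main work is to check that they combine correctly under the compositions defining $\virtual{\delta}$ and $\deltasv$.
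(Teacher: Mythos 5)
Your overall strategy---transport the statement through $\Phi$ and $\emb$ to the ambient simply-laced type and invoke the known left--right commutativity of \cite{KSS:2002} and \cite[Prop.~3.12]{OSSS16}---is exactly the paper's route: the paper proves this proposition (and the companion Proposition~\ref{prop:commuting_rc}) in one line from Proposition~\ref{prop:virtual_box}, Theorem~\ref{theorem.main} and the type $A_n^{(1)}$/$D_n^{(1)}$ results. Your first paragraph (deriving $\Phi\circ\mathrm{ry}=\widetilde{\zeta}\circ\Phi$ from Proposition~\ref{prop:diamond_theta} and Theorem~\ref{theorem.main}) and your third paragraph (reducing $[\delta,\widetilde{\delta}]=0$ to the ambient type via $\emb$ and the injectivity of $\emb$) are sound, modulo the caveat you yourself note that $\sigma$ in the definition of $\virtual{\lh}$ is a global map, so ``disjoint blocks'' needs Proposition~\ref{prop:involution} rather than locality.

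The genuine gap is in your second paragraph, where you try to dispose of the $\beta$, $\gamma$ and $\deltas$ cases inside the non-simply-laced type itself. First, $\widetilde{\gamma}=\theta\circ\gamma\circ\theta$ is \emph{not} the identity: $\gamma$ changes the multiplicity array $L$, hence the vacancy numbers, so $\widetilde{\gamma}$ shifts the rigging of each string by the corresponding change $p_i^{(a)}\bigl(\ls(L)\bigr)-p_i^{(a)}(L)$ (for instance, riggings of strings of length $<s$ in $\nu^{(r)}$ increase by $1$); ``commutes with everything'' therefore does not follow for free and is precisely the kind of statement \cite[Prop.~3.12]{OSSS16} establishes. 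Second, Proposition~\ref{prop:betas_properties} is a type $D_n^{(1)}$ statement: its proof uses the simply-laced convexity relation~\eqref{eq:convexity} and the $D_n^{(1)}$ factorization of $\deltas$ through $\emb_{2\times}$, so you cannot cite it for the maps $\delta$, $\deltas$, $\beta$ of \cite{OSS:2003a} in the remaining nonexceptional types; likewise your cascading of the $\deltas$ cases through $\emb_{2\times}$ crosses types (for $B_n^{(1)}$ the doubling map lands in $A_{2n-1}^{(2)}$) and so already requires the virtualization machinery. The repair is simple: drop the special-case analysis and run your third-paragraph argument uniformly for every pair $(\xi,\widetilde{\zeta})$ with $\xi,\zeta\in\{\delta,\deltas,\beta,\gamma\}$, using Lemma~\ref{lemma:virtual_xi} and Proposition~\ref{prop:virtual_box}(2) to identify $\emb\circ\xi$ and $\emb\circ\widetilde{\zeta}$ with compositions of ambient operations whose commutativity is given by \cite{KSS:2002} and \cite[Prop.~3.12, Prop.~3.16]{OSSS16}.
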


\begin{proof}
This follows from Proposition~\ref{prop:virtual_box}, Theorem~\ref{theorem.main} and the corresponding statements in types $A_n^{(1)}$~\cite{KSS:2002} and $D_n^{(1)}$~\cite[Prop~3.12]{OSSS16}.
\end{proof}

\begin{proposition}
\label{prop:commuting_rc}
Let $\g$ be of nonexceptional type.
The operation $\xi$ commutes with the operation $\widetilde{\zeta}$ for all $\xi, \zeta = \delta, \deltas, \beta, \gamma$ as long as they are well-defined.
\end{proposition}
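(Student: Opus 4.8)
The plan is to transport the commutation relations from paths to rigged configurations via the bijection $\Phi$ of Theorem~\ref{theorem.main}, exactly paralleling the proof of Proposition~\ref{prop:commuting_paths} but on the combinatorial side. Throughout, let $\xi$ be a left operation with associated left path operation $\lx$ (so that $\Phi \circ \lx = \xi \circ \Phi$ by Theorem~\ref{theorem.main}), and let $\widetilde{\zeta}$ be a right operation; write $\mathrm{ly}$ for the left path operation matching $\zeta$ and $\mathrm{ry} = \hwstar \circ \mathrm{ly} \circ \hwstar$ for the corresponding right path operation, while $\widetilde{\zeta} = \theta \circ \zeta \circ \theta$ by definition. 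We restrict to tensor products $B$ with at least two KR factors so that both $\lx$ (acting on the leftmost factor) and $\mathrm{ry}$ (acting on the rightmost factor) are defined, matching the hypothesis of Proposition~\ref{prop:commuting_paths}.

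First I would record that $\Phi$ intertwines each right path operation with the corresponding right rigged configuration operation, i.e. $\Phi \circ \mathrm{ry} = \widetilde{\zeta} \circ \Phi$. This is a short diagram chase combining Proposition~\ref{prop:diamond_theta} (which gives $\Phi \circ \hwstar = \theta \circ \Phi$) with Theorem~\ref{theorem.main} (which gives $\Phi \circ \mathrm{ly} = \zeta \circ \Phi$):
\begin{align*}
\Phi \circ \mathrm{ry}
&= \Phi \circ \hwstar \circ \mathrm{ly} \circ \hwstar
= \theta \circ \Phi \circ \mathrm{ly} \circ \hwstar \\
&= \theta \circ \zeta \circ \Phi \circ \hwstar
= \theta \circ \zeta \circ \theta \circ \Phi
= \widetilde{\zeta} \circ \Phi.
\end{align*}

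With this in hand, the main step is to compute, for an arbitrary path $p \in \p(B)$,
\begin{align*}
\xi \circ \widetilde{\zeta} \circ \Phi
&= \xi \circ \Phi \circ \mathrm{ry}
= \Phi \circ \lx \circ \mathrm{ry} \\
&= \Phi \circ \mathrm{ry} \circ \lx
= \widetilde{\zeta} \circ \Phi \circ \lx
= \widetilde{\zeta} \circ \xi \circ \Phi,
\end{align*}
where the central equality $\lx \circ \mathrm{ry} = \mathrm{ry} \circ \lx$ is precisely Proposition~\ref{prop:commuting_paths}, and the remaining equalities use Theorem~\ref{theorem.main} together with the intertwining established above. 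Since $\Phi \colon \p(B) \to \rc(L)$ is a bijection, every rigged configuration in $\rc(L)$ is of the form $\Phi(p)$, and hence $\xi \circ \widetilde{\zeta} = \widetilde{\zeta} \circ \xi$ on all of $\rc(L)$.

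The work, rather than the hard part, is the bookkeeping of domains and the well-definedness caveat. I would check that in each of the sixteen pairs $(\xi,\zeta)$ the leftmost and rightmost factors on which $\lx$ and $\mathrm{ry}$ act are genuinely distinct (guaranteed by the two-factor assumption), and that the spin operations $\deltas$, $\deltast$ and their analogues are only invoked in types $D_n^{(1)}$, $B_n^{(1)}$, and $D_{n+1}^{(2)}$ where they are defined; these are exactly the cases already covered by the ``as long as they are well-defined'' hypothesis of Proposition~\ref{prop:commuting_paths}, so the restriction transports verbatim. No new combinatorial analysis on rigged configurations is required, as everything is reduced to the path statement via $\Phi$.
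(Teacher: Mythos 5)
Your proof is correct, but it takes a different route from the paper's. The paper proves the commutation on rigged configurations \emph{directly} by virtualization: it embeds $\rc(L)$ into the ambient simply-laced $\vrc(\virtual{L})$, uses Proposition~\ref{prop:virtual_box}(2) to see that $\theta$ (hence each $\widetilde{\zeta}$) is compatible with $\emb$, expresses $\virtual{\xi}$ and $\theta\circ\virtual{\zeta}\circ\theta$ in terms of the ambient operations, invokes the known commutations in types $A_n^{(1)}$ and $D_n^{(1)}$ (\cite{KSS:2002} and \cite[Prop.~3.12, Prop.~3.16]{OSSS16}), and concludes by injectivity of $\emb$. You instead transport the already-established \emph{path} commutation (Proposition~\ref{prop:commuting_paths}) across the bijection $\Phi$, first deriving $\Phi\circ\mathrm{ry}=\widetilde{\zeta}\circ\Phi$ from Theorem~\ref{theorem.main} and Proposition~\ref{prop:diamond_theta} --- which is in substance Proposition~\ref{prop:right_version}, stated just after this result in the paper, and your derivation of it matches the paper's --- and then using surjectivity of $\Phi$. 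Both arguments rest on the same ultimate inputs (the simply-laced results plus the virtualization machinery), since Proposition~\ref{prop:commuting_paths} is itself proved by virtualization; your version is arguably more modular in that it requires no further unwinding of the virtual operations, at the cost of reordering the paper's logical dependencies (you prove the content of Proposition~\ref{prop:right_version} before, rather than after, this proposition, which is legitimate since its proof only uses results that precede this one). Your attention to the two-factor hypothesis inherited from Proposition~\ref{prop:commuting_paths} is appropriate and is implicitly what the paper's ``as long as they are well-defined'' covers.
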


\begin{proof}
This follows from Proposition~\ref{prop:virtual_box}, Theorem~\ref{theorem.main} and the corresponding statements in types $A_n^{(1)}$~\cite{KSS:2002} and $D_n^{(1)}$~\cite[Prop~3.12, Prop~3.16]{OSSS16}.
\end{proof}

\begin{proposition}
\label{prop:right_version}
Let $\g$ be of nonexceptional type.
Then under $\Phi$, the maps $\mathrm{rx}$ correspond to $\widetilde{\xi}$, where $\mathrm{x} = \mathrm{h}, \mathrm{h}_{\mathrm{sp}}, \mathrm{b, s}$ and $\xi = \delta, \deltas, \beta, \gamma$ respectively.
%
%
%
\end{proposition}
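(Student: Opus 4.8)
The plan is to deduce the right versions from the already-established left versions (Theorem~\ref{theorem.main}) by conjugating with the highest-weight/Lusztig operation $\hwstar$ on the path side and the complement-rigging involution $\theta$ on the rigged-configuration side. Recall from Section~\ref{subsection.bij AD} that, by definition, each right operation on paths factors as $\mathrm{rx} = \hwstar \circ \lx \circ \hwstar$ and each right operation on rigged configurations as $\widetilde{\xi} = \theta \circ \xi \circ \theta$, for the matching pairs $(\mathrm{x},\xi)$ among $(\mathrm{h},\delta)$, $(\mathrm{h}_{\mathrm{sp}},\deltas)$, $(\mathrm{b},\beta)$, $(\mathrm{s},\gamma)$. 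The operation $\hwstar$ reverses the order of the tensor factors via the box isomorphism~\eqref{eq:box_factors} and fixes each factor up to $(B^{r,s})^{\lusz} \iso B^{r,s}$; in particular it sends a tensor product of KR crystals to a tensor product of KR crystals with the \emph{same} multiplicity array, and it preserves the classical weight $\lambda$. Consequently, after the inner $\hwstar$ the factor on which $\mathrm{rx}$ is meant to act sits in leftmost position, so that $\lx$ is applicable, and the outer $\hwstar$ restores the original ordering. Thus the presentations $\mathrm{rx} = \hwstar \circ \lx \circ \hwstar$ are legitimate on the relevant crystals, and I take them as the starting point.

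With this in place the proof is a single diagram chase. For each matching pair $(\mathrm{x},\xi)$ I would compute
\begin{align*}
\Phi \circ \mathrm{rx}
&= \Phi \circ \hwstar \circ \lx \circ \hwstar \\
&= \theta \circ \Phi \circ \lx \circ \hwstar \\
&= \theta \circ \xi \circ \Phi \circ \hwstar \\
&= \theta \circ \xi \circ \theta \circ \Phi
= \widetilde{\xi} \circ \Phi,
\end{align*}
where the first and third equalities are $\Phi \circ \hwstar = \theta \circ \Phi$ from Proposition~\ref{prop:diamond_theta}, applied respectively to the input crystal $B$ and to the crystal obtained after $\lx \circ \hwstar$ (both tensor products of KR crystals, with equal multiplicity array, so that the target $\rc(L)$ is literally the same set), and the second equality is $\Phi \circ \lx = \xi \circ \Phi$ from Theorem~\ref{theorem.main}, using parts~(1),~(1'),~(2),~(3) for $\mathrm{x} = \mathrm{h}, \mathrm{h}_{\mathrm{sp}}, \mathrm{b}, \mathrm{s}$ respectively, applied to the $\hwstar$-image crystal in which the relevant factor is leftmost. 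Equivalently, this is the assertion that the front face of the cube whose left and right faces are supplied by Proposition~\ref{prop:diamond_theta} (as in~\eqref{eq:theta_cube}) and whose top and bottom faces are supplied by Theorem~\ref{theorem.main} commutes.

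Since no genuine computation remains once Proposition~\ref{prop:diamond_theta} and Theorem~\ref{theorem.main} are available, the only point needing care is the bookkeeping of domains and codomains: I must verify at each stage that the crystal is indeed a tensor product of KR crystals (so that both cited results apply) and that the factor acted on by $\lx$ after the first $\hwstar$ is leftmost. The former is automatic from the stability of tensor products of KR crystals under $\hwstar$, and the latter is exactly the reversal property of $\hwstar$ noted above; one also checks that $\hwstar$ and $\theta$ are involutions, so that the conjugation presentations of $\mathrm{rx}$ and $\widetilde{\xi}$ are internally consistent, although involutivity is not strictly needed for the chase. The spin case $\mathrm{x} = \mathrm{h}_{\mathrm{sp}}$ is handled verbatim, now invoking Theorem~\ref{theorem.main}(1') together with the (spin-inclusive) general form of Proposition~\ref{prop:diamond_theta}. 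The main conceptual difficulty is therefore entirely front-loaded into Proposition~\ref{prop:diamond_theta}, which itself rests on the compatibility of $\theta$ and $\hwstar$ with the virtualization map (Proposition~\ref{prop:virtual_box}) and the type $A_n^{(1)}$ and $D_n^{(1)}$ results; the present statement is a formal consequence of those.
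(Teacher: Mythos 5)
Your proof is correct and follows essentially the same route as the paper: the paper's own argument is precisely to combine Theorem~\ref{theorem.main} with Proposition~\ref{prop:diamond_theta} via a cube of the shape~\eqref{eq:theta_cube}, which is exactly the diagram chase $\Phi \circ \hwstar \circ \lx \circ \hwstar = \theta \circ \xi \circ \theta \circ \Phi$ that you write out, starting from the definitional identities $\mathrm{rx} = \hwstar \circ \lx \circ \hwstar$ and $\widetilde{\xi} = \theta \circ \xi \circ \theta$. Your version merely makes explicit the domain/codomain bookkeeping that the paper leaves implicit.
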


\begin{proof}
This follows from Theorem~\ref{theorem.main} and Proposition~\ref{prop:diamond_theta} by using a cube similar to~\eqref{eq:theta_cube}.
\end{proof}

\begin{proposition}
\label{prop:R_identity}
Let $\g$ be of nonexceptional type.
Then the diagram
\[
\xymatrix{
\p(B,\lambda) \ar[r]^{\Phi} \ar[d]_{R} & \rc(L,\lambda) \ar[d]^{\id}\\
\p(B',\lambda) \ar[r]_{\Phi} & \rc(L,\lambda)
}
\]
commutes.
\end{proposition}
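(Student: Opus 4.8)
The plan is to prove the statement by the same cube-diagram technique used for Proposition~\ref{prop:diamond_theta}, reducing it to the already-known fact that the combinatorial $R$-matrix corresponds to the identity on rigged configurations in the ambient types $A_n^{(1)}$~\cite{KSS:2002} and $D_n^{(1)}$~\cite{OSSS16}. Here $B'$ denotes $B$ with two adjacent KR factors interchanged by $R$, so that $L(B') = L(B) = L$; in particular the right vertical arrow is genuinely $\id$ and both corners on the right are $\rc(L,\lambda)$.

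First I would set up the cube whose front face is the square in the statement. Its back face replaces $\p$ and $\rc$ by their ambient counterparts $\vp(\virtual{B},\virtual{\lambda})$ and $\vrc(\virtual{L},\virtual{\lambda})$ with horizontal arrows $\Phi^X$; its depth arrows are the embeddings $\emb$; and its back vertical arrows are the ambient combinatorial $R$-matrix $\virtual{R}$, realized as a composition of elementary $R^X$-matrices effecting the corresponding block swap, on the left, and $\id$ on the right. The top and bottom faces commute by Theorem~\ref{theorem.main}, and the right face commutes trivially since both of its vertical arrows are $\id$. The back face commutes because $\virtual{R}$ is a composition of ambient combinatorial $R$-matrices, each of which corresponds to $\id$ on $\vrc$ by the ambient-type results cited above, so that $\Phi^X \circ \virtual{R} = \Phi^X$.

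The crux is the left face, namely the identity $\emb \circ R = \virtual{R} \circ \emb$, which I would establish from uniqueness of the combinatorial $R$-matrix together with Proposition~\ref{prop:virtual_tensor_category}. Both $\emb \circ R$ and $\virtual{R} \circ \emb$ send the maximal element $u \otimes u'$ of $B \otimes B'$ to the maximal element of $\virtual{B}' \otimes \virtual{B}$: the former because $R(u \otimes u') = u' \otimes u$ and $\emb$ carries maximal elements to maximal elements componentwise, the latter because $\emb(u \otimes u') = \emb(u) \otimes \emb(u')$ is the ambient maximal element and the ambient $R$-matrix $\virtual{R}$ sends maximal to maximal. Moreover both maps intertwine the type-$\g$ Kashiwara operator $f_a$ with the virtual operator $\virtual{f}_a = \prod_{j \in \phi^{-1}(a)} (f^X_j)^{\gamma_a}$: for $\emb \circ R$ this uses that $R$ is a $U_q'(\g)$-crystal isomorphism and $\emb$ is a virtualization map, while for $\virtual{R} \circ \emb$ it uses that $\virtual{R}$ commutes with each $f^X_j$. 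Since a tensor product of KR crystals is connected and generated from its maximal element, the two maps agree everywhere; in particular $\virtual{R}$ preserves the virtual subcrystal, so that the left face is well-defined and commutes.

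I expect this identification of the type-$\g$ $R$-matrix with the ambient $R$-matrix under $\emb$ to be the main obstacle, since it is the only step that is not a formal consequence of the preceding results; everything else is bookkeeping. Once it is in place, the cube lemma~\cite[Lemma~5.3]{KSS:2002}, applied with the injective map $\emb$, forces the front face to commute, which is exactly the assertion $\Phi \circ R = \Phi$.
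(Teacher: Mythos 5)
Your proposal is correct and follows essentially the same route as the paper: the identical cube diagram, with the left face ($\emb \circ R = \virtual{R} \circ \emb$) identified as the key step and established by the same argument that both maps send maximal elements to maximal elements and are determined by this via connectedness and the virtualization of the crystal structure. The paper's version is terser, citing~\cite[Thm.~5.1]{Okado.2013} and Proposition~\ref{prop:virtual_tensor_category} where you spell out the intertwining of the crystal operators, but the content is the same.
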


\begin{proof}
Consider the cube:
\begin{equation}
\label{eq:phi_R_cubed}
\begin{gathered}  
\xymatrix{
\p(B,\lambda) \ar[rrr]^{\Phi} \ar[dr]^{\emb} \ar[ddd]_{R} & & & \rc(L,\lambda) \ar[ddd]^{\id} \ar[dl]_{\emb} \\
& \vp(\virtual{B}, \virtual{\lambda}) \ar[r]^{\Phi^X} \ar[d]_{\virtual{R}} & \vrc(\virtual{L}, \virtual{\lambda}) \ar[d]^{\id} \\
& \vp(\virtual{B}', \virtual{\lambda}) \ar[r]_{\Phi^X} & \vrc(\virtual{L}, \virtual{\lambda}) & \\
\p(B',\lambda) \ar[rrr]_{\Phi} \ar[ur]_{\emb} & & & \rc(L,\lambda) \ar[ul]^{\emb}
}
\end{gathered}
\end{equation}
where $\virtual{R}$ is the corresponding combination of combinatorial $R^X$-matrices to obtain 
$\virtual{R} \colon \virtual{B} \to \virtual{B}'$.
The right face trivially commutes.
The top and bottom faces commute by Theorem~\ref{theorem.main}.
The back face commutes in type $A_{2n-1}^{(1)}$ by~\cite{KSS:2002} and $D_{n+1}^{(1)}$ by~\cite[Thm.~5.11]{OSSS16}.
To show the left face commutes, we first let $u$, $u'$, $\virtual{u}$ and $\virtual{u}'$ denote the maximal elements of 
$B$, $B'$, $\virtual{B}$ and $\virtual{B}'$ respectively.
Note that $\emb(u) = \virtual{u}$ and $\emb(u') = \virtual{u}'$.
By~\eqref{eq:ambient_emb}, 
Proposition~\ref{prop:virtual_tensor_category},~\cite[Thm.~5.1]{Okado.2013} and that the combinatorial 
$R$-matrix (resp.~$R^X$-matrix) is defined by $u \mapsto u'$ (resp.~$\virtual{u} \mapsto \virtual{u}'$), the left face commutes.
Therefore, the front face commutes as desired.
\end{proof}

Let $\cc \colon \rc(L) \to \ZZ$ denote the \defn{cocharge} statistic on rigged configurations and is defined, 
following~\cite[(3.3)]{S06II}, by
\[
	\cc(\nu, J) = \frac{1}{2}\sum_{a \in I_0} \sum_{i \in \ZZ_{>0}} t_a^{\vee} m_i^{(a)} \left( \sum_{j \in \ZZ_{>0}} L_j^{(a)} 
	- p_i^{(a)} \right) + \sum_{x \in J_i^{(a)}} t_a^{\vee} x,
\]
where $t_a^{\vee} = \max(c_i^{\vee} / c_i, c_0)$.\footnote{If we consider the $t_a^{\vee}$ as given~\cite{S06II}, we 
additionally require $t_a^{\vee} = 1$ for all $a$ when $\g$ is of type $A_{2n}^{(2)\dagger}$.}

Next, we recall the definition of the \defn{intrinsic energy} statistic $D \colon \p(B) \to \ZZ$ on paths~\cite[(2.13)]{OSS03III} 
(alternatively~\cite[(3.8)]{HKOTT02}, but the tensor factors are reversed from our tensor product convention).
For KR crystals $B^{r,s}$ and $B^{r',s'}$, the \defn{local energy function} $H \colon B^{r,s} \otimes B^{r',s'} \to \ZZ$ is defined by 
\begin{equation}
\label{eq:local_energy}
H\bigl( e_i(b \otimes b') \bigr) = H(b \otimes b') + \begin{cases}
-1 & \text{if } i = 0 \text{ and (LL)}, \\
1 & \text{if } i = 0 \text{ and (RR)}, \\
0 & \text{otherwise,}
\end{cases}
\end{equation}
where, for $\widetilde{b}' \otimes \widetilde{b} = R(b \otimes b')$, we have the conditions:
\begin{itemize}
\item[(LL)] $e_0(b \otimes b') = e_0 b \otimes b' \text{ and } e_0(\widetilde{b}' \otimes \widetilde{b}) = 
e_0 \widetilde{b}' \otimes \widetilde{b}$;
\item[(RR)] $e_0(b \otimes b') = b \otimes e_0 b' \text{ and } e_0(\widetilde{b}' \otimes \widetilde{b}) = 
\widetilde{b}' \otimes e_0 \widetilde{b}$.
\end{itemize}
It is known that $H$ is uniquely defined up to an additive constant~\cite{KKMMNN91}. We normalize $H$ by the 
condition $H( u_{s\kappa_r \clfw_r} \otimes u_{s'\kappa_{r'} \clfw_{r'}} ) = 0$.

Next consider $B^{r,s}$, and define $D_{B^{r,s}} \colon B^{r,s} \to \ZZ$ as follows.
Consider $b \in B(\lambda) \subseteq B^{r,s}$ and the classical weights $\lambda$ and $\mu = s \clfw_r$ as partitions.
Then, we define
\[
D_{B^{r,s}}(b) = \begin{cases}
\frac{1}{2} \bigl( \lvert \mu \rvert - \lvert \lambda \rvert \bigr) & \text{if } 
\g = D_n^{(1)}, B_n^{(1)}, A_{2n-1}^{(2)}, C_n^{(1)}, A_{2n}^{(2)\dagger}, \\
\lvert \mu \rvert - \lvert \lambda \rvert& \text{otherwise}.
\end{cases}
\]
Let $B = \bigotimes_{i=1}^N B^{r_i,s_i}$. We define the \defn{energy}~\cite{HKOTY99} $D \colon B \to \ZZ$ by
\[
	D = \sum_{1 \leqslant i < j \leqslant N} H_i R_{i+1} R_{i+2} \cdots R_{j-1} 
	+ \sum_{j=1}^N D_{B^{r_j,s_j}} R_1 R_2 \cdots R_{j-1},
\]
where $R_i$ and $H_i$ are the combinatorial $R$-matrix and local energy function, respectively, acting on the $i$-th and 
$(i+1)$-th factors and $D_{B^{r_j,s_j}}$ acts on the rightmost factor. Note that $D$ is constant on classical components 
since $H$ is and $R$ is a $U_q'(\g)$-crystal isomorphism.

\begin{theorem}
Let $\g$ be of nonexceptional type. Then
\[
D = \cc \circ \theta \circ \Phi.
\]
\end{theorem}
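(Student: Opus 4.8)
The plan is to reduce to the ambient type, where the analogous statement $D^X = \cc^X \circ \theta \circ \Phi^X$ is already known (type $A_{2n-1}^{(1)}$ by~\cite{KSS:2002} and type $D_{n+1}^{(1)}$ by~\cite{OSSS16}), and then transfer it along the virtualization map $\emb$. Concretely, for $p \in \p(B)$ I would take $q = \emb(p) \in \p(\virtual{B})$ (using $\emb(\p(B)) = \p(\virtual{B})$) and chain the ambient identity with the compatibilities already established:
\[
D^X(\emb(p)) = \cc^X\bigl(\theta(\Phi^X(\emb(p)))\bigr) = \cc^X\bigl(\theta(\emb(\Phi(p)))\bigr) = \cc^X\bigl(\emb(\theta(\Phi(p)))\bigr),
\]
using $\Phi^X \circ \emb = \emb \circ \Phi$ from Theorem~\ref{theorem.main} and $\theta \circ \emb = \emb \circ \theta$ from Proposition~\ref{prop:virtual_box}(2). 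Everything then hinges on two scaling lemmas asserting that energy and cocharge are multiplied by the \emph{same} positive constant $C$ under $\emb$.

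The first scaling lemma states $D^X \circ \emb = C\, D$. To prove it I would check that the three ingredients of $D$ virtualize compatibly. The combinatorial $R$-matrix satisfies $\emb \circ R = \virtual{R} \circ \emb$ (the left face of~\eqref{eq:phi_R_cubed}, which follows from the uniqueness of $R$ together with $\emb(u) = \virtual{u}$ on maximal elements). The local energy function satisfies $H^X(\emb(b \otimes b')) = C\, H(b \otimes b')$ with matching normalization, which follows because $\virtual{e}_0 = \prod_{b \in \phi^{-1}(0)} (e_b^X)^{\gamma_0}$ relates the $0$-arrows governing~\eqref{eq:local_energy} and because maximal elements embed to maximal elements. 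Finally the local term $D_{B^{r,s}}$ scales by the same $C$; the factor $\tfrac12$ appearing in its definition for $D_n^{(1)}, B_n^{(1)}, A_{2n-1}^{(2)}, C_n^{(1)}, A_{2n}^{(2)\dagger}$ is exactly what makes the normalizations line up. This is essentially the energy-under-virtualization statement of~\cite{OSS03III}.

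The second scaling lemma states $\cc^X \circ \emb = C\, \cc$ with the \emph{same} $C$. This I would verify by direct substitution into the cocharge formula, using $\virtual{m}_{\gamma_a i}^{(b)} = m_i^{(a)}$ and $\virtual{J}_{\gamma_a i}^{(b)} = \gamma_a J_i^{(a)}$ from~\eqref{eq:virtual_RC}, the vacancy-number relation~\eqref{eq:virtual_vacancy}, and $\virtual{L} = L(\virtual{B})$. In the ambient type all $t_a^{X\vee} = 1$, so the scaling factors $\gamma_a$ together with the multiplicities $\lvert \phi^{-1}(a) \rvert$ convert the ambient cocharge into $C$ times the type-$\g$ cocharge, with the correct $t_a^{\vee}$ of type $\g$ emerging from this bookkeeping. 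Once both lemmas hold, the displayed chain gives $C\, D(p) = C\, \cc(\theta(\Phi(p)))$, and dividing by $C \neq 0$ yields $D = \cc \circ \theta \circ \Phi$.

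The main obstacle will be checking that the two scaling constants genuinely agree at the distinguished node $a = n$ in types $A_{2n}^{(2)}$, $A_{2n}^{(2)\dagger}$, $B_n^{(1)}$, and $C_n^{(1)}$, where the scaling factor $\gamma_n$, the special rigging and vacancy-number conventions~\eqref{eq:virtual_RC_A2}, the factor $\kappa_n = 2$, and the value of $t_n^{\vee}$ all interact. Here the half-width and double-width conventions for $\nu^{(n)}$, together with the exceptional cocharge formula for $A_{2n}^{(2)}, A_{2n}^{(2)\dagger}$, must be tracked carefully so that the energy side and the cocharge side are seen to scale by identical constants.
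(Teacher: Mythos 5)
Your proposal follows essentially the same route as the paper: reduce to the known simply-laced cases via $\emb$, chain $\Phi^X \circ \emb = \emb \circ \Phi$ and $\theta \circ \emb = \emb \circ \theta$, and conclude from the two scaling identities $D \circ \emb = \gamma_0\, D$ and $\cc \circ \emb = \gamma_0\, \cc$ (your constant $C$ is $\gamma_0$). The only difference is that the paper dispatches the cocharge scaling by citing~\cite[Thm.~4.2]{OSS03II} and the local energy scaling by citing~\cite[Thm.~5.1]{Okado.2013} together with the commuting left face of the cube~\eqref{eq:phi_R_cubed}, rather than by the direct verifications you sketch.
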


\begin{proof}
We first note that the claim was shown for type $A_n^{(1)}$ in~\cite{KSS:2002} and $D_n^{(1)}$ in~\cite{OSSS16}.
For the remaining nonexceptional types $\g$, it is sufficient to show that 
\begin{subequations}
\begin{align}
\label{eq:virtual_cocharge}
\cc(\virtual{\nu}, \virtual{J}) & = \gamma_0 \cc(\nu, J),
\\
\label{eq:virtual_energy}
D\bigl( \emb(b) \bigr) & = \gamma_0 D(b),
\end{align}
\end{subequations}
as the claim follows from the corresponding $\virtual{\g}$ case.
We have~\eqref{eq:virtual_cocharge} from~\cite[Thm.~4.2]{OSS03II}.
To show~\eqref{eq:virtual_energy}, it is sufficient to show that
\begin{equation}
\label{eq:virtual_local_energy}
H\bigl(\emb(b_1 \otimes b_2)\bigr) = \gamma_0 H(b_1 \otimes b_2),
\end{equation}
where $H \colon B^{r,s} \otimes B^{r',s'} \to \ZZ$ is the local energy function~\cite{KKMMNN91}.
From~\cite[Thm.~5.1]{Okado.2013} and the fact that the left face of the cube~\eqref{eq:phi_R_cubed} commutes, we have~\eqref{eq:virtual_local_energy}.
\end{proof}

\begin{corollary}
Let $\g$ be of nonexceptional type.
Then the $X = M$ conjecture of~\cite{HKOTY99, HKOTT02} holds:
\[
X(B, \lambda; q) = \sum_{b \in \p(B, \lambda)} q^{D(b)} = \sum_{(\nu, J) \in \rc(L, \lambda)} q^{\cc(\nu, J)} = M(L, \lambda; q).
\]
\end{corollary}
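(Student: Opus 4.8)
The plan is to read off the three equalities one at a time, handling the outer two by definition and the middle one via the preceding theorem. The leftmost equality $X(B,\lambda;q) = \sum_{b \in \p(B,\lambda)} q^{D(b)}$ is simply the definition of the one-dimensional configuration sum $X$ as the generating function of paths in the fixed classical weight space $\lambda$, graded by the intrinsic energy $D$. Likewise, the rightmost equality $\sum_{(\nu,J) \in \rc(L,\lambda)} q^{\cc(\nu,J)} = M(L,\lambda;q)$ is the definition of the fermionic formula $M$ as the generating function of rigged configurations with multiplicity array $L$ and $\g_0$-weight $\lambda$, graded by cocharge. So neither outer equality requires any work beyond unwinding notation.

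It therefore remains to establish the middle equality. First I would invoke the previous theorem, which gives $D = \cc \circ \theta \circ \Phi$, so that $D(b) = \cc\bigl(\theta(\Phi(b))\bigr)$ for every $b \in \p(B,\lambda)$. Substituting this into the path-side sum rewrites it as $\sum_{b \in \p(B,\lambda)} q^{\cc(\theta(\Phi(b)))}$, and the claim then reduces to showing that $\theta \circ \Phi$ restricts to a bijection $\p(B,\lambda) \to \rc(L,\lambda)$. Once that is in hand, reindexing the sum over the image produces exactly the fermionic side.

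The one point that genuinely needs checking — and hence the only real obstacle — is that $\theta \circ \Phi$ is weight preserving, so that it truly lands in $\rc(L,\lambda)$. By Theorem~\ref{theorem.main}, $\Phi \colon \p(B,\lambda) \to \rc(L,\lambda)$ is already a bijection onto the fixed weight space, so it suffices to verify that the complement rigging involution $\theta$ preserves $\rc(L,\lambda)$. Here I would use that $\theta$ replaces each rigging $x \in J_i^{(a)}$ by its corigging $p_i^{(a)} - x$ while leaving the underlying configuration $\nu$ untouched; in particular all the $m_i^{(a)}$ and all vacancy numbers $p_i^{(a)}$, and thus the stabilized values $p_{\infty}^{(a)}$, are unchanged. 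Since $\wt_0(\nu,J)$ depends only on $\nu$ through the $p_{\infty}^{(a)}$ and not on the riggings $J$, the involution $\theta$ maps $\rc(L,\lambda)$ bijectively onto itself. Composing the two bijections shows $\theta \circ \Phi \colon \p(B,\lambda) \to \rc(L,\lambda)$ is a bijection, so as $b$ ranges over $\p(B,\lambda)$ the element $\theta(\Phi(b))$ ranges exactly once over each $(\nu,J) \in \rc(L,\lambda)$. This yields the middle equality and completes the proof of the corollary.
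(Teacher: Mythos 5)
Your proof is correct and is exactly the intended argument: the outer equalities are definitional, and the middle one follows from the preceding theorem $D = \cc \circ \theta \circ \Phi$ together with the observation that $\theta \circ \Phi$ is a weight-preserving bijection $\p(B,\lambda) \to \rc(L,\lambda)$ (the check that $\theta$ fixes each weight space, since it alters only riggings and the weight depends only on $\nu$ and $L$, is the right one). The paper states the corollary without proof as an immediate consequence, so your write-up matches its approach.
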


\begin{theorem}
The map $\Phi \colon B \to \RC(L)$ is a $U_q(\g_0)$-crystal isomorphism, where $\RC(L)$ is the closure of $\rc(L)$ 
under the crystal operators of~\cite{Sch:2006, schilling.scrimshaw.2015}.
\end{theorem}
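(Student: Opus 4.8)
The plan is to reduce to the already-known simply-laced cases through the virtualization maps, exactly as in the proofs of Theorem~\ref{theorem.main} and Proposition~\ref{prop:diamond_theta}. Recall that for ambient type $A_{2n-1}^{(1)}$ (resp.\ $D_{n+1}^{(1)}$) the map $\Phi^X$ is a $U_q(\virtual{\g}_0)$-crystal isomorphism onto $\RC^X(\virtual{L})$ by~\cite{DS06} (resp.~\cite{Sak:2013}). I would transport this isomorphism back to type $\g$ through the embeddings $\emb$. The guiding fact is that $\rc(L)$ is precisely the set of $I_0$-highest weight elements of $\RC(L)$~\cite{Sch:2006, schilling.scrimshaw.2015}, so the path bijection $\Phi \colon \p(B,\lambda) \to \rc(L,\lambda)$ already identifies highest weight elements with highest weight elements, and it remains to promote it to all of $B$.

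The first ingredient I would assemble is that $\emb$ is a virtualization on both sides: on paths, $\emb \colon B \to B^X$ intertwines $e_a, f_a$ with the virtual operators $\virtual{e}_a, \virtual{f}_a$ by the very definition of a virtual crystal (Section~\ref{sec:virtual_crystals}); and on rigged configurations, the embedding~\eqref{eq:virtual_RC} intertwines the $U_q(\g_0)$-crystal operators of~\cite{Sch:2006, schilling.scrimshaw.2015} on $\RC(L)$ with the virtual operators on $\RC^X(\virtual{L})$. Granting these, I would define the extension by
\[
\Phi := \emb^{-1} \circ \Phi^X \circ \emb,
\]
and check three things: that the image $\Phi^X\bigl(\emb(B)\bigr)$ lies in $\emb\bigl(\RC(L)\bigr)$ so that $\emb^{-1}$ applies; that the resulting map agrees on $\p(B)$ with the bijection of Theorem~\ref{theorem.main}; and that it is a $U_q(\g_0)$-crystal isomorphism.

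For the image statement, observe that $\emb(B) = \virtual{B}$ is a virtual crystal, hence the closure of $\emb\bigl(\p(B)\bigr)$ under the virtual operators $\virtual{f}_a$. Since $\Phi^X$ commutes with every $e^X_b, f^X_b$, it commutes with each $\virtual{f}_a = \prod_{b \in \phi^{-1}(a)} (f^X_b)^{\gamma_a}$, and by Theorem~\ref{theorem.main} it sends $\emb\bigl(\p(B)\bigr)$ into $\emb\bigl(\rc(L)\bigr)$; therefore it sends $\virtual{B}$ into the $\virtual{f}_a$-closure of $\emb\bigl(\rc(L)\bigr)$, which is exactly $\emb\bigl(\RC(L)\bigr)$ by the virtualization of the rigged-configuration crystal. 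Agreement on paths is immediate from Theorem~\ref{theorem.main}. Finally, $\Phi$ is a composition of the crystal isomorphism $\emb$ onto its image, the $U_q(\virtual{\g}_0)$-crystal isomorphism $\Phi^X$, and the inverse crystal isomorphism $\emb^{-1}$; since virtualization identifies $\virtual{e}_a \leftrightarrow e_a$ and $\virtual{f}_a \leftrightarrow f_a$, the composite intertwines $e_a, f_a$ for all $a \in I_0$ and is a bijection, hence a $U_q(\g_0)$-crystal isomorphism. This is most cleanly packaged as a commuting cube whose vertical faces are the two virtualizations, whose back face is the known ambient isomorphism, and whose top and bottom faces are $\Phi$ and $\Phi^X$.

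The main obstacle I anticipate is establishing the second virtualization claim: that the combinatorial $U_q(\g_0)$-crystal operators on $\RC(L)$ of~\cite{Sch:2006, schilling.scrimshaw.2015} commute with the embedding~\eqref{eq:virtual_RC} up to the scaling factors $\gamma_a$, together with the special doubling~\eqref{eq:virtual_RC_A2} at $a = n$ in types $A_{2n}^{(2)}$ and $A_{2n}^{(2)\dagger}$. This is a purely combinatorial verification: one compares the box-adding and box-removing rules defining $f_a$ on $\nu^{(a)}$ against $\virtual{f}_a = \prod_{b \in \phi^{-1}(a)} (f^X_b)^{\gamma_a}$ acting on the scaled partitions $\virtual{\nu}$, using~\eqref{eq:virtual_vacancy} to match vacancy numbers and the alignment condition to guarantee that the factors $f^X_b$ for $b \in \phi^{-1}(a)$ act in lockstep. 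Once this compatibility is in hand, the remainder is the formal diagram chasing already used repeatedly in this section.
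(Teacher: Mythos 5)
Your proposal is correct and follows essentially the same route as the paper: reduce to the known simply-laced isomorphisms via the virtualization maps and transport the result back through $\emb$. The one ingredient you flag as the ``main obstacle'' --- that the $U_q(\g_0)$-crystal operators of~\cite{Sch:2006, schilling.scrimshaw.2015} on rigged configurations are compatible with the embedding~\eqref{eq:virtual_RC} --- is precisely what the paper cites as already established ([Prop.~6.4] of~\cite{schilling.scrimshaw.2015}), so no new combinatorial verification is needed.
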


\begin{proof}
It is known that the claim holds in type $A_n^{(1)}$~\cite{DS06, KSS:2002} and in type $D_n^{(1)}$~\cite{OSSS16, Sak:2013}.
By Theorem~\ref{theorem.main} and~\cite[Prop.~6.4]{schilling.scrimshaw.2015}, the map $\Phi$ commutes with the 
crystal operators. Therefore, the claim follows from Theorem~\ref{theorem.main}.
\end{proof}

\bibliographystyle{alpha}
\bibliography{paper_all}{}
 
\end{document}